\documentclass{article}
\usepackage[T1]{fontenc}
\usepackage{microtype}
\usepackage[a4paper]{geometry}
\usepackage{amsfonts}
\usepackage{amsthm}
\usepackage{latexsym}
\usepackage{dsfont}
\usepackage{amssymb}
\usepackage{amsmath}
\usepackage{mathtools}
\usepackage{xfrac}
\usepackage{graphicx}
\usepackage{wrapfig}
\usepackage{caption}
\usepackage{tikz}
\usepackage[hidelinks]{hyperref}
\usepackage{enumerate}

\numberwithin{equation}{section}

\theoremstyle{definition}

\swapnumbers
\theoremstyle{plain}
\newtheorem{Thm}{Theorem}[section]  
\newtheorem{Lemma}[Thm]{Lemma}
\newtheorem{Prop}[Thm]{Proposition}

\theoremstyle{definition}
\newtheorem{Def}[Thm]{Definition}
\newtheorem{Exa}[Thm]{Example}

\theoremstyle{remark}
\newtheorem{Rem}[Thm]{Remark}

\newcommand{\N}{\mathbb{N}}

\newcommand{\R}{\mathbb{R}}

\newcommand{\e}{\mathrm{e}}

\newcommand{\B}{\mathcal{B}}
\newcommand{\D}{\mathcal{D}}

\newcommand{\F}{\mathcal{F}}
\newcommand{\G}{\mathcal{G}}
\newcommand{\cC}{\mathcal{C}}

\newcommand{\I}{\mathcal{I}}

\newcommand{\cS}{\mathcal{S}}
\newcommand{\cQ}{\mathcal{Q}}
\newcommand{\cO}{\mathcal{O}}

\newcommand{\Prob}{\mathbf{P}}
\DeclareMathOperator{\Var}{\mathbb{V}}
\newcommand{\E}{\mathbf{E}}

\newcommand{\1}{\mathds{1}}
\newcommand{\comp}{\mathsf{c}}

\newcommand{\conv}[2]{\ {\underset{#1\to #2}{\longrightarrow}}\ }

\newcommand{\pconv}[2]{\ {\underset{#1\to #2}{\overset{\mathbb{P}}{\longrightarrow}}}\ }
\newcommand{\Lconv}[2]{\ {\underset{#1\to #2}{\overset{L^1}{\longrightarrow}}}\ }

\newcommand{\convn}{\ {\underset{n\to \infty}{\longrightarrow}}\ }
\newcommand{\asympeq}[2]{\ {\underset{#1\to #2}{\sim}}\ }

\newcommand{\coloneqq}{\vcentcolon=}

\newcommand{\eps}{\varepsilon}

\newcommand{\Fbar}{\overline{F}}

\newcommand{\Wbar}{\overline{W}}
\def\cal#1{\mathcal{#1}}

\def\d{\mathrm{d}}
\def\L{L}

\def\Zhat{Z}
\def\Zbar{\overline{Z}}
\def\muhat{F}
\def\cC{\mathcal{C}}
\def\bbP{\mathbb{P}}
\def\bbE{\mathbb{E}}

\def\error#1{o_{T,n}^\bbP(#1)}

\begin{document}
\title{Large deviations for the branching random walk with heavy-tailed associated random walk -- a principle of one big jump}
\author{Jakob Stonner}

\maketitle

\begin{abstract}
We prove a version of Nagaev's theorem for the branching random walk with heavy-tailed associated random walk. 
For a branching random walk on $\mathbb{R}$ we consider the random measure $Z_n = \sum_{|u|=n} \e^{-V_u} \delta_{V_u}$ where $V_u$, $|u|=n$ denote the positions of the particles in the $n$-th generation.
Under the assumption that $\bbE[Z_1(\cdot)]$ is a probability distribution with regularly varying tail, we prove that $Z_n((n\mathbb{E}[X] + t_n, \infty)) = W n \mathbb{P}(X > t_n)(1 + o(1))$ in $L^1$ as $n \to \infty$ where $W$ is a non-zero random variable, $t_n \uparrow \infty$ grows suitably fast, and $X$ has law $\bbE[Z_1(\cdot)]$.
The result is explained probabilistically by a principle of one big jump for the branching random walk.

\smallskip

\noindent
{\bf Keywords:} Branching random walk, large deviations, regular variation
\\{\bf Subclass:} MSC: 60J80 $\cdot$ 60F10
\end{abstract}

\section{Introduction}
In this article we prove a version of Nagaev's theorem for the branching random walk.
Nagaev's theorem \cite{Nagaev1979} states that, for random walk $(S_n)_{n \in \N}$ with zero mean, unit variance, and regularly varying tail of index $-p < -2$ (i.e., $\Prob(S_1 > \lambda t)/\Prob(S_1 > t) \to \lambda^{-p}$ as $t \to \infty$ for all $\lambda > 0$) we have
\begin{equation}\label{eq:Nagaev-preview}
     \Prob(S_n > n\E[S_1] + t) = n\Prob(S_1 > t)(1 + o(1))
\end{equation}
as $n \to \infty$ uniformly in $t \geq a \sqrt{n \log n}$, for any fixed $a > \sqrt{p-2}$.
This formula is explained probabilistically by the \emph{principle of one big jump}: 
Due to its heavy tail, the `cheapest' way for a random walk to deviate substantially from its mean is for one of its increments to make a `big jump'.
To present our version of this result for the branching random walk, let us first define the model.
The branching random walk is a collection of particles $u \in \I$ which have positions $(V_u)_{u \in \I}$ in $\R$ that are constructed in the following way.
Let $\xi = \sum_{j=1}^N \delta_{X_j}$ be a point process on the real numbers.
We start with a single particle located at $0$, forming the generation $0$.
Now in the $(n+1)$-th step of the construction of the process, each particle $u$ from generation $n$ receives an independent identically distributed (i.i.d.) copy of $\xi$ that we shift by the position $V_u$ of $u$.
Then the $(n+1)$-th generation is formed by particles that are located according to the shifted point processes assigned to the particles of the $n$-th generation.
A precise model definition will be given in the next section.

Let
\begin{equation}\label{eq:Laplace-trafo}
     m(\theta) \coloneqq \bbE\bigg[ \int \e^{-\theta x} \xi(\d x)\bigg] = \bbE\bigg[ \sum_{j=1}^N \e^{-\theta X_j} \bigg], \quad \theta \in \R
\end{equation}
denote the Laplace transform of the intensity measure of $\xi$.
We assume that $m(1) = 1$ and we have $m(\theta) = \infty$ for all $\theta < 1$.
Define
\begin{equation}\label{eq:Zhat-def}
     \Zhat_n \coloneqq \sum_{|u| = n} \e^{-V_u}\delta_{V_u},\quad n \in \N_0.
\end{equation}
Here the sum ranges over all particles in generation $n$.
The random measure $\Zhat_n$ is a natural object to describe the positions of particles in large generations (see e.g.~\cite{Biggins1992}).
Moreover, its intensity measure $\bbE[\Zhat_n(\cdot)]$ is the law of the $n$-th step of a random walk with increment law $F \coloneqq \bbE[\Zhat_1(\cdot)]$, which we call the \emph{associated random walk}.
This random walk is an essential tool for investigation of the model (see e.g.~\cite{Lyons1997,Aidekon2014,Shi2015}).

We assume further that $F$ has a regularly varying tail of index $-p$ for some $p > 2$.
Then our version of \eqref{eq:Nagaev-preview} for the branching random walk reads
\begin{equation*}
     \Zhat_n (n\E[S_1] + t_n,\,\infty) = W n \Prob(S_1 > t_n)(1 + o(1))
\end{equation*}
in $L^1$, for every suitably fast growing sequence $t_n \uparrow \infty$ (see Theorem \ref{thm:brw-large-deviations} below).
Here $W$ is the limit of \emph{Biggins' martingale} $W_n \coloneqq \Zhat_n(\R)$ as $n \to \infty$.
The intuition behind this result is that, just as for the random walk, the `cheapest' way for any individual particle's position in the branching random walk to deviate substantially from its expected position is to allow exactly one of its increments to make a `big jump'.
The main contribution to $\Zhat_n (n\E[S_1] + t_n,\,\infty)$ is then comprised of such particles.

\subsection{Notation and overview}
We write $\N = \{1,2,\ldots\}$ and $\N_0 = \N \cup \{0\}$.
For real numbers $a$ and $b$ we denote $a \wedge b = \min\{a,b\}$ and $a \vee b = \max\{a,b\}$.
For sequences $(a_n)_{n \in \N}$, $(b_n)_{n \in \N}$ of real numbers we write $a_n \in o(b_n)$ or $a_n \ll b_n$ if $a_n/b_n \to 0$ as $n \to \infty$.
We also write $a_n \in \cO(b_n)$ if $\limsup_{n \to \infty} a_n/b_n < \infty$.
Moreover, $a_n \sim b_n$ means that $a_n/b_n \to 1$ as $n \to \infty$.
Analogously, if $f,g:\R \to \R$ are functions (such that $g(t)$ is eventually positive for large $t$), then $f(t) \sim g(t)$, $f(t) \in o(g(t))$ and $f(t) \in \cO(g(t))$ are defined similarly.
We use set notation with $\cO(g(t))$ and $o(g(t))$, so e.g.~$f(t) \in \cO(g_1(t)) \cap \cO(g_2(t))$ means $f(t) \in \cO(g_1(t))$ and $f(t) \in \cO(g_2(t))$.
For random variables $X, Y$ we write $X \preceq Y$ and say that $Y$ stochastically dominates $X$ if we have
\begin{equation*}
     \Prob(X \geq t) \leq \Prob(Y \geq t)
\end{equation*}
for all sufficiently large $t \geq 0$.

The rest of this article is organized as follows.
In Section \ref{sec:results} we define the model, state our result, and provide examples.
The proof is presented in Sections \ref{sec:proof} and \ref{sec:stochdom}.
We give a short summary of the proof in Section \ref{sec:overview}.

\subsection{Notation and overview}
We write $\N = \{1,2,\ldots\}$ and $\N_0 = \N \cup \{0\}$.
For real numbers $a$ and $b$ we denote $a \wedge b = \min\{a,b\}$ and $a \vee b = \max\{a,b\}$.
For sequences $(a_n)_{n \in \N}$, $(b_n)_{n \in \N}$ of real numbers we write $a_n \in o(b_n)$ or $a_n \ll b_n$ if $a_n/b_n \to 0$ as $n \to \infty$.
We also write $a_n \in \cO(b_n)$ if $\limsup_{n \to \infty} a_n/b_n < \infty$.
Moreover, $a_n \sim b_n$ means that $a_n/b_n \to 1$ as $n \to \infty$.
Analogously, if $f,g:\R \to \R$ are functions (such that $g(t)$ is eventually positive for large $t$), then $f(t) \sim g(t)$, $f(t) \in o(g(t))$ and $f(t) \in \cO(g(t))$ are defined similarly.
We use set notation with $\cO(g(t))$ and $o(g(t))$, so e.g.~$f(t) \in \cO(g_1(t)) \cap \cO(g_2(t))$ means $f(t) \in \cO(g_1(t))$ and $f(t) \in \cO(g_2(t))$.
For random variables $X, Y$ we write $X \preceq Y$ and say that $Y$ stochastically dominates $X$ if we have
\begin{equation*}
     \Prob(X \geq t) \leq \Prob(Y \geq t)
\end{equation*}
for all sufficiently large $t \geq 0$.

The rest of this article is organized as follows.
In Section \ref{sec:results} we define the model, state our result, and provide examples.
The proof is presented in Sections \ref{sec:proof} and \ref{sec:stochdom}.
We give a short summary of the proof in Section \ref{sec:overview}.
\section{Model setup and main results}\label{sec:results}
\subsection{Branching Random Walk}\label{subsec:setup}
In the following we define the branching random walk model with increment point process $\xi = \sum_{j=1}^N \delta_{X_j}$.
Here $(X_j)_{j \in \N}$ is a sequence of $\R$-valued random variables and $N$ takes values in $\N_0 \cup \{\infty\}$.
The branching random walk is a collection of particles $u$ that have positions $V_u$ in $\R \cup \{\infty\}$.
We label the particles using the Ulam-Harris tree $\I \coloneqq \bigcup_{n \in \N_0} \N^n$ where $\N^0$ by convention contains only the empty word $\varnothing$.
For $u = (u_1,\ldots,u_n) \in \N^n$ and $v = (v_1,\ldots,v_m) \in \N^m$ with $n,m \in \N_0$ we write
\begin{equation*}
     uv \coloneqq (u_1,\ldots,u_n,v_1,\ldots,v_m) \in \N^{n+m}
\end{equation*}
for the concatenation of $u$ and $v$.
Similarly, for $u = (u_1,\ldots,u_n) \in \N^n$ and $j \in \N$ we write
\begin{equation*}
     uj \coloneqq (u_1,\ldots,u_n,j)
\end{equation*}
and call $u$ the \emph{parent} of $uj$.
Moreover, for $u \in \N^n$ with $n \in \N_0$ we write $|u| = n$ and call it the \emph{generation} of $u$.
There is a partial order on $\I$ given by
\begin{equation*}
u \leq v \quad \Leftrightarrow \quad|u| \leq |v|\text{ and }(v_1,\ldots,v_{|u|}) = u
\end{equation*}
where $v = (v_1,\ldots,v_{|v|})$.
We write $u < v$ if $u \leq v$ and $u \neq v$, $u,v \in \I$.
If $u \leq v$ we call $u$ an \emph{ancestor} of $v$.

The branching random walk is now constructed as follows.
The process starts with a single particle labeled with $\varnothing$ at position $V_\varnothing \coloneqq 0$.
Let $(\xi_u)_{u \in \I}$ be a family of i.i.d.~copies of $\xi$ and denote
\begin{equation*}
\xi_u = \sum_{j=1}^{N(u)}\delta_{X_j(u)},\quad u \in \I.
\end{equation*}
For $u \in \I$ we define recursively the position $V_{uj}$ of the particle with label $uj$, $j \in \N$, by
\begin{equation*}
     V_{uj} \coloneqq \begin{cases}
     V_{u} + X_j(u) & j \leq N(u), \\
     \infty & \text{else}
     \end{cases}
\end{equation*}
with the convention $\infty + x = \infty$ for all $x \in \R \cup \{\infty\}$.
Then the collection $(V_u)_{u \in \I}$ is called branching random walk with increment point process $\xi$.
Further, $(V_u)_{u \in \I}$ is called \emph{CMJ process} 
if $\xi$ is almost surely supported in $[0,\infty)$.
A CMJ process $(V_u)_{u \in \I}$ is naturally interpreted as a population model where $u \in \I$ labels an individuum that is born at time $V_u$ (cf.~\cite{Nerman1981,Jagers1989}).
We occasionally write $V(u)$ instead of $V_u$ if it is notationally more convenient.
Further, we write $\Delta V(uj) \coloneqq X_j(u) = V(uj)-V(u)$ for the displacement of the particle $uj$ relative to its parent $u$.

If $\Phi:\R^\I \to \R$ is a measurable map and $u \in \I$ we define the shifted map
\begin{equation*}
[\Phi]_u: \R^\I \to \R,\quad (x_v)_{v \in \I} \mapsto \Phi\big((x_{uv} - x_u)_{v \in \I} \big).
\end{equation*}
In other words, the operator $[\cdot]_u$ shifts the considered quantity into the sub-tree rooted at $u$ and shifts the positions to make the new root $u$ sit at the origin.
For instance, for $u \in \I$ and $W_1 = \sum_{|u|=1} \e^{-V_u}$ we have
\begin{equation*}
[W_1]_u = \sum_{|v|=1} \e^{-(V_{uv} - V_u)} = \sum_{j=1}^{N(u)}\e^{-X_j(u)}.
\end{equation*}
\subsection{Assumptions, examples and result statement}
Let $\xi$ be a point process on $\R$ with intensity measure $\mu(B) \coloneqq \bbE[\xi(B)]$, $B \subseteq \R$ Borel.
Throughout we assume that $\xi$ is almost surely locally finite.
We define the Laplace transform $m$ by \eqref{eq:Laplace-trafo} and set
\begin{align*}
     m'(\theta) \coloneqq -\int x \e^{-\theta x} \mu(\d x),\quad \theta \in \R,
\end{align*}
if the integral exists.
Note that $m'$ arises from $m$ by formally differentiating (indeed $m'$ equals the derivative of $m$ if both are well-defined).
In the following we consider the assumptions
\begin{gather}
     m(1) = 1, \quad m(\theta) = \infty \text{ for all }\theta < 1 \tag{A1} \label{eq:heavy-tailed} \\
     m'(1) \in (-\infty, 0). \tag{A2} \label{eq:positive-mean}
\end{gather}
\begin{Rem}\label{rem:transform2Malthusian}
     In this article we are interested in values $\theta$ from the boundary of the domain of definition of $m$
     \begin{equation*}
          \D \coloneqq \{\theta \in \R\,:\,m(\theta) < \infty\}.
     \end{equation*}
     Note that if $\D$ is nonempty, then it is an interval.
     Generally we are interested in the left-most $\theta > 0$ such that $m(\theta) < \infty$.
     If for a given point process $\xi = \sum_{j=1}^N \delta_{X_j}$ there exists $\theta > 0$ with $m(\theta) < \infty$ and $m(\theta') = \infty$ for all $\theta' < \theta$
     then we may reduce to \eqref{eq:heavy-tailed} by switching to the point process
     \begin{equation*}
          \tilde{\xi} = \sum_{j=1}^N \delta_{\theta X_j + \log m(\theta)}.
     \end{equation*}
     Therefore, if $\D$ is non-empty, $\inf \D > 0$, and $\inf \D \in \D$ then we may assume \eqref{eq:heavy-tailed} without loss of generality.
\end{Rem}
\vspace{0.2cm}

Let $(V_u)_{u \in \I}$ be a branching random walk with increment point process $\xi$, starting from $0$.
Note that \eqref{eq:heavy-tailed} implies that $\bbE[\xi(\R)] = \infty$, whence the branching process is supercritical.
Let
\begin{equation}\label{eq:Biggins-MG}
     W_n \coloneqq \sum_{|u| = n} \e^{-V_u},\quad n \in \N_0,
\end{equation}
denote Biggins' martingale.
It is well-known (see \cite{Biggins1992, Lyons1997}) that this martingale has a non-zero limit $W$ on survival of the branching process if and only if \eqref{eq:positive-mean} holds and
\begin{equation}\label{eq:Biggins-cond}
     \bbE[W_1 \log_+ W_1] < \infty
\end{equation}
with $\log_+ x \coloneqq 0 \vee \log x$.
We shall assume that there exists $\gamma \in (1,2)$ such that
\begin{equation}\label{eq:Lp-cond}
     \bbE[W_1^\gamma] < \infty \text{ and } m(\gamma) < 1. \tag{A3}
\end{equation}
Under this assumption $W_n$ converges to $W$ in $\L^\gamma$ by \cite[Thm 2.1]{Liu2000}.
Moreover, by assumption \eqref{eq:heavy-tailed} the measure 
\begin{equation*}
     \muhat(\d x) \coloneqq \e^{- x}\mu(\d x)
\end{equation*}
defines a \emph{heavy-tailed} distribution, i.e., it lacks a finite exponential moment.
One possible natural further assumption is therefore that its tail $\Fbar(t) \coloneqq \muhat(t,\,\infty)$ is regularly varying at infinity:
\begin{equation}\label{eq:mu-reg-var}
     \Fbar(t) \asympeq{t}{\infty} t^{-p} \ell(t) \tag{A4}
\end{equation}
for some $p > 0$ and a slowly varying function $\ell$ at $\infty$ (i.e., $\ell(\lambda t)/\ell(t) \to 1$ as $t\to \infty$, for all $\lambda > 0$).
The random walk $(S_n)_{n \in \N_0}$ with increment law $\Prob(S_1 \in (\cdot)) = \muhat$ will be called \emph{associated} with the branching random walk $(V_u)_{u \in \I}$.
Subject to \eqref{eq:positive-mean}, $\muhat$ has finite, positive mean
\begin{equation*}
     c \coloneqq \int x \muhat(\d x) = \E[S_1] = -m'(1) \in (0,\infty).
\end{equation*}
If $p > 2$ (which will be the case throughout the paper), then \eqref{eq:mu-reg-var} and \eqref{eq:heavy-tailed} further imply that $\muhat$ has finite variance
\begin{equation*}
     \sigma^2 \coloneqq \int (x-c)^2 \muhat(\d x) \in (0,\infty).
\end{equation*}
Further, we obtain a stronger result by additionally assuming that there exists $\gamma > 1$ such that
\begin{equation}\label{eq:Zhat-tail-gamma}\tag{A5}
     \bbE[\Zbar_1(t)^\gamma] \in \cO(\Fbar(t)^\gamma) \quad \text{ as }t \to \infty.
\end{equation}
where $\Zbar_1(t) \coloneqq \int_t^\infty \e^{-x} \xi(\d x) = \sum_{|u|=1}\e^{-V_u}\1\{V_u > t\}$, $t \in \R$.
Note that, by Jensen's inequality, if \eqref{eq:Zhat-tail-gamma} is true for some $\gamma > 1$ then it is also true for any $\gamma' \in (1,\gamma)$.
Therefore, if \eqref{eq:Zhat-tail-gamma} and \eqref{eq:Lp-cond} hold we may without loss of generality we may assume that the conditions hold for the same $\gamma \in (1,2)$.
\begin{Rem}
     Assumption \eqref{eq:Zhat-tail-gamma} may be viewed as a strong uniform integrability condition.
     By de la Vall\'ee-Poussin's theorem $(\Zbar_1(t)/\Fbar(t))_{t \in \R}$ is uniformly integrable if and only if there exists a function $G:[0,\infty) \to [0,\infty)$ with $G(t)/t \to \infty$ such that
     \begin{equation*}
          \sup_{t \in \R} \bbE \Big[G\Big( \frac{\Zbar_1(t)}{\Fbar(t)}\Big) \Big] < \infty.
     \end{equation*}
     Assumption \eqref{eq:Zhat-tail-gamma} states that we can choose $G(x) = x^\gamma$ for some $\gamma > 1$, therefore imposing that $(\Zbar_1(t)/\Fbar(t))_{t \in \R}$ is uniformly integrable.
     In particular, assumption \eqref{eq:Zhat-tail-gamma} is never satisfied if $\xi(\R) < \infty$ almost surely since then have $\Zbar_1(t)/\Fbar(t) \to 0$ as $t \to \infty$ almost surely.
\end{Rem}
\begin{Exa}\label{exa:example}     
     (a) 
     Let $\xi$ be a Poisson point process with intensity measure
     \begin{equation*}
          b\,\e^x x^{-(p+1)} \ell(x) \1_{(1,\infty)}(x) \d x
     \end{equation*}
     for some $b > 0$, $p > 2$ and a slowly varying function $\ell$ at infinity.
     Then we have
     \begin{equation*}
          m(\theta) = b \int_1^\infty \e^{-x(\theta - 1)} x^{-(p+1)} \ell(x) \d x
     \end{equation*}
     which is finite if and only if $\theta \geq 1$.
     Moreover, choosing $b^{-1} = \int_1^\infty x^{-(p+1)} \ell(x) \d x$ guarantees \eqref{eq:heavy-tailed}.
     The assumption \eqref{eq:mu-reg-var} follows from Karamata's theorem (see e.g.~\cite[Proposition 1.5.8]{Bingham1987}).
     Since $p > 2$ this implies \eqref{eq:positive-mean}.
     Moreover, Campbell's formula for the variance of integrals with respect to Poisson point processes (e.g.~\cite[Lemma 15.22 (iii)]{Kallenberg2021}) gives that
     \begin{equation*}
          \Var[W_1] = \int_0^\infty (\e^{-x})^2 \bbE[\xi(\d x)] = m(2) < \infty.
     \end{equation*}
     This implies \eqref{eq:Lp-cond}.
     Further, for all $\gamma \in (1,2)$ we have, by Jensen's inequality, Campbell's formula, and subadditivity of $x \mapsto x^{\gamma/2}$,
     \begin{align*}
          \bbE[\Zbar_1(t)^\gamma] &\leq \bbE[ \Zbar_1(t)^2]^{\gamma/2} \\
          &= \big(\Var[\Zbar_1(t)] + \bbE[\Zbar_1(t)]^2 \big)^{\gamma/2} \\
          &= \bigg( \int_t^\infty \e^{-2x} \bbE[\xi(\d x)] + \Fbar(t)^2 \bigg)^{\gamma/2} \\
          &\leq \e^{-\gamma t/2} \Fbar(t)^{\gamma/2} + \Fbar(t)^\gamma.
     \end{align*}
     This shows that \eqref{eq:Zhat-tail-gamma} holds.
     Consequently, $\xi$ is an example of a point process where \eqref{eq:heavy-tailed} through \eqref{eq:Zhat-tail-gamma} are satisfied.
     \vspace{0.2cm}

     \noindent
     (b)
     The following example is inspired by the model that is considered in \cite{Dereich2017} where a Yule process is used instead of a Cox process.
     Let $f$ be a $[0,1)$-valued random variable, $b > 0$, and let $\xi$ be a Cox process with random intensity measure $b\, \e^{f x} \d x$ on $[0,\infty)$.
     Then we have
     \begin{equation*}
          m(\theta) = b\bbE \bigg[\int_0^\infty \e^{-x(\theta - f)} \d x \bigg] = b\bbE[(\theta - f)^{-1}]
     \end{equation*}
     whenever $\theta > f$ almost surely, and $m(\theta) = \infty$ otherwise.
     Now assume that we have
     \begin{equation*}
          \bbP(1 - f \leq t) = t^{p+1}\ell(t),\quad t \leq \eps
     \end{equation*}
     for $\eps \in (0,1)$, $p > 0$ and $\ell$ slowly varying at zero.
     Then one can show that $m(1)$ finite and therefore we can choose $b > 0$ so that $m(1) = 1$.
     As in (a) one shows that \eqref{eq:Lp-cond} is satisfied.
     Moreover, one can show that $t \mapsto \bbE[\e^{-t(1-f)}]$ is regularly varying at $\infty$ with index $-(p+1)$ (details are provided in Appendix \ref{sec:example}).
     Therefore,
     \begin{equation*}
          \Fbar(t) = b \int_t^\infty \bbE[\e^{-x(1-f)}] \d x
     \end{equation*}
     is regularly varying with index $-p$ by Karamata's theorem.
     However, using $\bbE[\Zbar_1(t)\,|\,f] = b \e^{-t(1-f)}/(1-f)$ and Jensen's inequality for conditional expectations we find for every $\gamma > 1$
     \begin{align*}
          \bbE[\Zbar_1(t)^\gamma] &\geq \bbE[ \bbE[\Zhat_1(t)\,|\,f]^\gamma] = b^\gamma \bbE\Big[ \frac{\e^{-\gamma t(1-f)}}{(1-f)^\gamma} \Big] \\
          &\geq b^\gamma \bbE\Big[ \frac{\e^{-\gamma t(1-f)}}{1-f} \Big] = b^{\gamma-1} \Fbar(\gamma t).
     \end{align*}
     This implies that $\bbE[\Zbar_1(t)^\gamma]/\Fbar(t)^\gamma \to \infty$ as $t \to \infty$.
     We conclude that \eqref{eq:heavy-tailed} through \eqref{eq:mu-reg-var} hold but \eqref{eq:Zhat-tail-gamma} is not satisfied.
     \vspace{0.2cm}

     \noindent
     (c)
     Let $N$ be a $\N$-valued random variable such that $\bbE[N^{1/2}] = 1$ and 
     \begin{equation*}
          \bbP(N \geq t) \asympeq{t}{\infty} t^{-1/2} (\log t)^{-(p+1)}.
     \end{equation*}
     Define $\xi = N \delta_{\frac{\log N}2}$.
     Then we have
     \begin{equation*}
          m(\theta) = \bbE[N^{1-\theta/2}],
     \end{equation*}
     so that \eqref{eq:heavy-tailed} is satisfied.
     Moreover, a short calculation shows that
     \begin{equation*}
          \Fbar(t) = \bbE[N^{1/2}\1\{ N > \e^{2t}\}] \asympeq{t}{\infty} c t^{-p}
     \end{equation*}
     for some $c \in (0,\infty)$,
     whence \eqref{eq:mu-reg-var} holds with $\ell = 1$.
     However, we have $W_1 = N^{1/2}$, so \eqref{eq:Biggins-cond} holds but \eqref{eq:Lp-cond} is violated.
     Thus $\xi$ is an example of a point process that satisfies \eqref{eq:heavy-tailed}, \eqref{eq:positive-mean} and \eqref{eq:mu-reg-var} but not \eqref{eq:Lp-cond} and \eqref{eq:Zhat-tail-gamma}.
     This highlights the interest of replacing the assumption \eqref{eq:Lp-cond} by \eqref{eq:Biggins-cond}.
\end{Exa}
\begin{Rem}
     A situation in which \eqref{eq:heavy-tailed} through \eqref{eq:Zhat-tail-gamma} hold may arise in CMJ processes that have no Malthusian parameter, i.e., there is no $\theta > 0$ such that $m(\theta) = 1$.
     If there is a $\theta > 0$ such that $m(\theta) < 1$ and $m(\theta') = \infty$ for all $\theta' < \theta$ (take for instance Example \ref{exa:example} (a) or (b) with smaller $b > 0$) then we can use the transformation in Remark \ref{rem:transform2Malthusian} to get a point process $\tilde{\xi}$ that satisfies \eqref{eq:heavy-tailed}.
     Note that $\tilde{\xi}$ may not be supported in $[0,\infty)$ anymore.
\end{Rem}
\vspace{0.2cm}
Now we state our result. For $n \in \N$, let the random measure $\Zhat_n$ be defined by \eqref{eq:Zhat-def} and let $\Zbar_n(t) \coloneqq \Zhat_n(t,\infty)$, $t \in \R$ denote its tail.
\begin{Thm}\label{thm:brw-large-deviations}
     Assume \eqref{eq:heavy-tailed} through \eqref{eq:mu-reg-var} with $p > 2$ and fix $a > \sqrt{p-2}$ and a sequence $(t_n)_{n \in \N}$ with $t_n \geq a \sigma \sqrt{n \log n}$ for all $n \in \N$.
     Further suppose that one of the following conditions is satisfied:
     \begin{enumerate}[(I)]
          \item 
          There are $\delta \in (0,1-\gamma), q \in (1+\delta,\gamma)$, and a sequence $(r(n))_{n \in \N}$ with $r(n) \in o(n) \cap o(t_n)$ such that 
          \begin{gather}
               \limsup_{n \to \infty} \e^{-(1+\delta)r(n)}\sum_{j=1}^{n-1} \e^{r(j)} < \infty, \quad \text{ and } \label{eq:r-growth} \\
               \limsup_{n \to \infty} \frac{\e^{-r(n)(1 - (1+\delta)/q)}}{n \Fbar(t_n)} < \infty. \label{eq:r2t}
          \end{gather} \label{item:case1}
          \item \eqref{eq:Zhat-tail-gamma} holds. \label{item:case2}
     \end{enumerate}
     Then we have
     \begin{equation}\label{eq:brw-large-deviations}
          \frac{\Zbar_n(nc + t_n)}{n \Prob(S_1 > t_n)} \Lconv{n}{\infty} W
     \end{equation}
     where $W$ is the limit of Biggins' martingale.
     Moreover, in the case \eqref{item:case2} the convergence holds in $L^q$ for any $q < 2 - \gamma^{-1}$.
\end{Thm}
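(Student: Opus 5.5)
The plan is to split the $n$ generations into a short initial stretch of length $k=k_n\to\infty$ with $k_n\in o(n)$, and the remaining $n-k$ generations. Conditioning on $\F_k$ and using the branching property,
\begin{equation*}
\Zbar_n(nc+t_n)=\sum_{|u|=k}\e^{-V_u}\,[\Zbar_{n-k}]_u\big(nc+t_n-V_u\big).
\end{equation*}
Taking conditional expectations via the many-to-one identity $\bbE[\Zbar_m(s)]=\Prob(S_m>s)$ gives
\begin{equation*}
\bbE\big[\Zbar_n(nc+t_n)\,\big|\,\F_k\big]=\sum_{|u|=k}\e^{-V_u}\,\Prob\big(S_{n-k}>(n-k)c+t_n+(kc-V_u)\big).
\end{equation*}
Nagaev's theorem applies uniformly in the range $t\geq a'\sigma\sqrt{n\log n}$ for some $a'\in(\sqrt{p-2},a)$, and $\Fbar$ is regularly varying. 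So for particles with $|V_u-kc|\le \eps t_n$, each summand is $n\Fbar(t_n)(1+o(1))$ up to a factor in $[(1+\eps)^{-p},(1-\eps)^{-p}]$. The contribution of particles with $|V_u-kc|>\eps t_n$ is controlled in $L^1$ by $\bbE[\Zhat_k(\{x:|x-kc|>\eps t_n\})]=\Prob(|S_k-kc|>\eps t_n)\lesssim k\Fbar(\eps t_n)+k\sigma^2/(\eps t_n)^2$. After dividing by $n\Fbar(t_n)$ this is $o(1)$ once $k\ll n$ and $t_n\gg\sqrt{n}$; here $t_n^2\Fbar(t_n)\ge t_n^{2-p-o(1)}$ together with $t_n\gtrsim\sqrt{n\log n}$ is used. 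Hence the conditional mean equals $W_k\,n\Fbar(t_n)(1+o(1))$ in $L^1$, and $W_k\to W$ in $L^\gamma$ by \eqref{eq:Lp-cond}.

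The main step is to show that the fluctuation
\begin{equation*}
\Zbar_n(nc+t_n)-\bbE[\Zbar_n(nc+t_n)\,|\,\F_k]
\end{equation*}
is $o(n\Fbar(t_n))$ in $L^1$. Given $\F_k$, it is a sum of independent centred terms $\e^{-V_u}(Y_u-\bbE Y_u)$. The von Bahr--Esseen inequality with exponent $\gamma\in(1,2)$ bounds its $\gamma$-th conditional moment by
\begin{equation*}
2\sum_{|u|=k}\e^{-\gamma V_u}\,\bbE\big[[\Zbar_{n-k}]_u(s_u)^\gamma\big],\qquad s_u\coloneqq nc+t_n-V_u .
\end{equation*}
The expectation of the weight sum is $m(\gamma)^k$, which decays geometrically because $m(\gamma)<1$. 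This decay must beat a normalised $\gamma$-moment bound of the form $\bbE[\Zbar_m(s)^\gamma]\lesssim C(m)\,(m\Fbar(s))^\gamma$ (or a weaker polynomial version).

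I expect this uniform $\gamma$-moment bound on $\Zbar_m(s)/(m\Fbar(s))$ to be the main obstacle.

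In case \eqref{item:case2}, I would prove it by a one-big-jump decomposition along the spine. A particle ends above $s$ essentially because some ancestor at generation $j$ made a jump exceeding about $s$. Assumption \eqref{eq:Zhat-tail-gamma} controls the $\gamma$-moment of the mass created at each single jump by $\Fbar(s)^\gamma$, and the subsequent subtree contributes a factor $W$ with bounded $\gamma$-moment. A stochastic domination argument, the $\preceq$ comparison announced for Section \ref{sec:stochdom}, then reduces the sum over $j$ to a random walk estimate. Once the bound $\bbE[(\Zbar_m(s)/m\Fbar(s))^{\gamma}]=\cO(1)$ holds uniformly, the ratio family is bounded in $L^\gamma$. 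Interpolating this bound with the $L^1$ convergence gives $L^q$ convergence. The restriction $q<2-\gamma^{-1}$ should come from how the fluctuation term scales, with $k$ tuned against $m(\gamma)^{k/\gamma}$.

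In case \eqref{item:case1}, no such bound is available. There I would let $k$ grow and truncate instead. Within each generation $j$, discard the particles whose weight $\e^{-V_u}$ exceeds $\e^{-r(j)}$, i.e.\ those with $V_u<r(j)$. For the remaining particles, the $\gamma$-moment (or a $q$-moment) is bounded using $\e^{-V_u(q-1)}\le \e^{-r(j)(q-1)}$. Summing over generations with \eqref{eq:r-growth} produces the factor $\e^{-r(n)(1-(1+\delta)/q)}$, which after normalisation by $n\Fbar(t_n)$ stays bounded by \eqref{eq:r2t}; combining this with the small-weight truncation gives the $o(1)$ error. The discarded heavy particles have vanishing total mass because $\min_{|u|=j}V_u\to\infty$ at linear speed, and $r(n)\in o(n)$. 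This yields \eqref{eq:brw-large-deviations} in $L^1$.
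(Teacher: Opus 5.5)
Your skeleton differs genuinely from the paper's. You cut at a deterministic generation $k_n$ and use $W_{k_n}$, where the paper cuts along the coming generation $\cC_{r(n)}$ and uses Nerman's martingale. You control fluctuations row-wise by von Bahr--Esseen, where the paper uses Nerman-type weak laws. The skeleton can be made to work, but as written the case \eqref{item:case1} step fails, and there are two further gaps.

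\emph{Case \eqref{item:case1}.} With a single cut generation there is nothing to ``sum over generations'', so \eqref{eq:r-growth} has no role. Also, ``bounded by \eqref{eq:r2t}'' is not enough. In the paper that bounded factor multiplies a term which is $o_{\bbP}(1)$ by Theorem \ref{thm:wlln-w/growth}; your scheme has no second vanishing factor. What you need is that \emph{every} particle at the cut generation has $V_u\ge r(n)$, not $r(j)$. Since $\bbP(\min_{|u|=k}V_u<r)\le \e^{\gamma r}m(\gamma)^k$, this holds with high probability if $k_n\ge Cr(n)$ with $C>\gamma/\log(1/m(\gamma))$. This is compatible with $k_n\in o(n)$ because $r(n)\in o(n)$. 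On that event, von Bahr--Esseen and $\sup_m\bbE[W_m^\gamma]<\infty$ bound the conditional $q$-th moment of the fluctuation by $C\e^{-(q-1)r(n)}W_{k_n}$. So the fluctuation is $\cO_{\bbP}(\e^{-(1-1/q)r(n)})$, which is $o(n\Fbar(t_n))$ by \eqref{eq:r2t} thanks to the extra factor $\e^{-\delta r(n)/q}$. $L^1$ convergence then follows from convergence of the means, as in the paper. This repair makes \eqref{eq:r-growth} superfluous. The paper needs it only because Theorem \ref{thm:wlln-w/growth} concatenates rows of sizes $|\cC^T_{r(j)}|\approx\e^{r(j)}$ into one sequence; its stopping line gives the weight bound $\e^{-V_u}\le\e^{-r(n)}$ for free.

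\emph{Conditional-mean step.} Your Chebyshev term divided by $n\Fbar(t_n)$ is $k\sigma^2/(\eps^2 n t_n^2\Fbar(t_n))$. This does not tend to $0$ in general, since $t_n^2\Fbar(t_n)\to0$: it diverges for $p>4$ already at $t_n\asymp\sqrt{n\log n}$, and for every $p$ once $t_n$ grows fast. Treat the two sides separately.
\begin{itemize}
\item For $V_u>kc+\eps t_n$, Nagaev's theorem at time $k$ (valid since $\eps t_n\gg\sqrt{k\log k}$) gives $\Prob(S_k-kc>\eps t_n)\le 2k\Fbar(\eps t_n)=o(n\Fbar(t_n))$.
\item For $V_u<kc-\eps t_n$, each summand is at most $\Fbar_{n-k}(t_n)\le 2n\Fbar(t_n)$, and $\bbE[\Zhat_k(-\infty,kc-\eps t_n)]\to0$ by Chebyshev.
\end{itemize}
You must also use the first bound to discard particles with $V_u>kc+\eps t_n$ before the fluctuation estimate. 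For them the threshold $nc+t_n-V_u$ is too small for any normalised moment bound.

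\emph{Case \eqref{item:case2}.} The bound you need is precisely Proposition \ref{prop:stochdom}, and your sketch misses its key device. One writes $\bbE[\Zbar_n(t)^{1+\eta}]=\bbE^{\cQ}[\Zbar_n(t)^{\eta}\1\{V(w_n)>t\}]$ and applies conditional Jensen given the spine (using $\eta\le1$). This reduces everything to first moments of off-spine subtrees, which are handled by many-to-one and Nagaev; no stochastic domination argument enters. The exponent $1+\eta=2-\gamma^{-1}$ comes from the H\"older step with exponents $\gamma,1/\eta$ in that proof, not from how fluctuations scale. A uniform $(1+\eta)$-moment bound suffices for your von Bahr--Esseen step, since $m(1+\eta)<1$. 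It also gives the $L^q$ claim for $q<2-\gamma^{-1}$ by uniform integrability.

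Once repaired, your route is shorter than the paper's. It needs no overshoot lemma, no Nerman's martingale, and neither the embedded ladder process nor the growth rate of $|\cC_t^T|$.
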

\begin{Rem}
     (a) We can decompose
     \begin{equation*}
          \Zbar_n(t) \eqqcolon \sum_{|u|=n}\e^{-V_u}\1\big\{V_u > t,\,\exists v \leq u:\,\Delta V(v) > h_n \geq \max_{\substack{w \leq u \\ w \neq v}} \Delta V(w) \big\} + \mathrm{rest}_n(t)
     \end{equation*}
     for some fixed sequence $(h_n)_{n \in \N}$.
     The rest term collects contributions to $\Zbar_n(t)$ due to particles which have no or at least two `large jumps' among the displacements of their ancestors, where a displacement is considered to be `large' if it exceeds $h_n$.
     Using the many-to-one lemma (Lemma \ref{lemma:many21} below) and the proof of Nagaev's theorem given in \cite{Denisov2008}) (see Equation (7) and Proposition 8.1 in \cite{Denisov2008}) it follows that for $h_n \coloneqq \sigma \sqrt{n/(a \log n)}$ we have
     \begin{equation*}
          \bbE[\mathrm{rest}_n(t)] \in o(n \Prob(S_1 > t))
     \end{equation*}
     as $n \to \infty$ uniformly in $t \geq nc + a \sigma \sqrt{n \log n}$.
     It follows that only particles which have exactly one `large jump' among the displacements of one of their ancestors contribute to $\Zbar_n(t)$ asymptotically.
     Hence, \eqref{eq:brw-large-deviations} may be explained heuristically by a \emph{principle of one big jump}.
     \vspace{0.2cm}

     \noindent (b)
     One may compare the uniformity of Theorem \ref{thm:brw-large-deviations} to that of Nagaev's theorem (see \eqref{eq:Nagaev-preview}, and Theorem \ref{thm:Nagaev} below). 
     The statement of Theorem \ref{thm:brw-large-deviations} in case \eqref{item:case2} is equivalent with
     \begin{equation*}
          \sup_{t \geq a \sigma \sqrt{n \log n}}\bbE \bigg|\frac{\Zbar_n(nc + t)}{n \Prob(S_1 > t)} - W \bigg| \convn 0.
     \end{equation*}
     It is plausible that under certain conditions \eqref{eq:brw-large-deviations} holds almost surely along any fixed sequence $t_n \geq a \sigma \sqrt{n \log n}$.
     However, if $\xi(\R)$ is finite almost surely (as in Example \ref{exa:example} (c)) then for every $n \in \N$ there exists a large $t > 0$ with $\Zbar_n(nc + t) = 0$, so that 
     \begin{equation*}
          \sup_{t \geq a \sigma \sqrt{n \log n}}\Big|\frac{\Zbar_n(nc + t)}{n \Fbar(t)} - W \Big| \geq |W|.
     \end{equation*}
     This shows that an almost sure convergence with the same kind of uniformity as in Nagaev's theorem is not true generally.
     \vspace{0.2cm}

     \noindent (c)
     Note that, in view of Potter's theorem (e.g.~\cite[Theorem 1.5.6]{Bingham1987}) and $r(n) \in o(n)$, condition \eqref{eq:r2t} in case \eqref{item:case1} implies $\log t_n \ll n$.
     Moreover, the following facts are true.
     \begin{enumerate}[(i)]
          \item
          If $(r(n))_{n \in \N}$ is increasing and $\e^{-\delta r(n)}n \to 0$ as $n \to \infty$ then \eqref{eq:r-growth} holds.
          \item
          If we have
          \begin{equation*}
               \liminf_{n \to \infty} \frac{r(n)}{\log t_n} > \frac{p}{1-\frac{1+\delta}{\gamma}}
          \end{equation*}
          then \eqref{eq:r2t} follows.
          Indeed, there exist $q \in (1,\gamma)$ and $p' > p$ such that for all sufficiently large $n$ we have
          \begin{equation*}
               r(n) \geq \frac{p'}{1 - \frac{1+\delta}{q}} \log t_n.
          \end{equation*}
          Therefore we have
          \begin{equation*}
               \frac{\e^{-r(n)(1 - (1+\delta)/q)}}{n \Fbar(t_n)} \leq \e^{-p' \log t_n + \log \Fbar(t_n)}.
          \end{equation*}
          By Potter's theorem, the exponent on the right-hand side goes to $-\infty$ as $n \to \infty$, whence \eqref{eq:r2t} holds.
          \item
          If $(t_n)_{n \in \N}$ is increasing and satisfies $\log t_n \ll n$ then all conditions of case \eqref{item:case1} hold.
          Indeed, then there exists an increasing sequence $(r(n))_{n \in \N}$ with $\log t_n \ll r(n) \ll t_n \wedge n$.
          In particular, for every $C \in (0,\infty)$ we have $\log t_n \leq C r(n)$ eventually. Using $t_n \geq a \sigma \sqrt{n \log n}$ this yields for every $\delta > 0$
          \begin{equation*}
               \e^{-\delta r(n)} \leq \e^{-\delta C \log(a \sigma \sqrt{n \log n})} = (a \sigma)^{-\delta C} (n \log n)^\frac{-\delta C}{2},
          \end{equation*}
          yielding $\e^{-\delta r(n)}n \to 0$ if $C > 2/\delta$.
          Therefore, from (i) and (ii) we conclude \eqref{eq:r-growth} and \eqref{eq:r2t}.
     \end{enumerate}
     \vspace{0.2cm}

     \noindent (d)
     The following argument shows that in some cases a growth limitation of $(t_n)_{n \in \N}$ is crucial for the conclusion of Theorem \ref{thm:brw-large-deviations} to hold. 
     Suppose \eqref{eq:brw-large-deviations} holds for all $(t_n)_{n \in \N}$ with $t_n \geq a \sigma \sqrt{n \log n}$ (as in case \ref{item:case2}).
     Fix $q > p$ and consider the sum
     \begin{equation*}
          Y_n \coloneqq \sum_{|u| = n} \e^{-V_u} (V_u)_+^q.
     \end{equation*}
     We find for all $t \geq 0$
     \begin{equation*}
          Y_n \geq \sum_{|u| = n} \e^{-V_u} (V_u)_+^q \1\{V_u > t\} \geq t^q \Zbar_n(t) \geq t^q \Zbar_n(nc + t).
     \end{equation*}
     Now let $(a_n)_{n \in \N}$ be an arbitrary sequence of positive numbers.
     We find
     \begin{equation}\label{eq:arbitrary-growth}
          \bbP(Y_n \geq a_n) \geq \bbP(t_n^q \Zbar_n(nc + t_n) \geq a_n) = \bbP\Big( \frac{\Zbar_n(nc + t_n)}{n \Fbar(t_n)} \geq \frac{a_n}{n \Fbar(t_n) t_n^q} \Big) \convn 1
     \end{equation}
     if we choose $t_n$ appropriately.
     Suppose that there are infinitely many $n \in \N$ such that $\bbP(Y_n < \infty) = 1$.
     Then there exist sequences $(n_j)_{j \in \N}$ and $(a_{n_j})_{j \in \N}$ such that $n_j \uparrow \infty$ and $\bbP(Y_{n_j} \leq a_{n_j}) \leq 1/2$ for all $j \in \N$.
     If we now extend $(a_{n_j})_{j \in \N}$ to a sequence $(a_n)_{n \in \N}$ by filling in the missing values arbitrarily, then we obtain
     \begin{equation*}
          \liminf_{n \to \infty} \bbP(Y_n \geq a_n) \leq \frac12,
     \end{equation*}
     contradicting \eqref{eq:arbitrary-growth}.
     Therefore, we have $Y_n = \infty$ with positive probability for all but finitely many $n \in \N$.
     Under the assumption $\xi(\R) < \infty$ almost surely, for instance, this is not possible since every generation is finite almost surely.
     Therefore, there are cases in which \eqref{eq:brw-large-deviations} is not true for arbitrarily fast growing $t_n \uparrow \infty$.
\end{Rem}
\section{Proof of Theorem \ref{thm:brw-large-deviations}}\label{sec:proof}
\subsection{Preliminaries}\label{subsec:preliminaries}
In the following we state well-known results that will be used in the proof of Theorem \ref{thm:brw-large-deviations}.
\subsubsection*{The associated random walk and the coming generation}
\begin{Lemma}[{Many-to-one formula,\cite{Biggins1997a,Shi2015}}]\label{lemma:many21}
     Let $(V_u)_{u \in \I}$ be a branching random walk with $m(1) = \bbE[\sum_{|u|=1} \e^{-V_u}] = 1$.
     Let $(S_n)_{n \in \N}$ be a random walk with increment distribution 
     \begin{equation*}
          \Prob(S_1 \in \d x) = \muhat(\d x) = \e^{- x}\mu(\d x)
     \end{equation*}
     starting at $0$.
     Then for every $n \in \N$ and bounded, measurable $f:\R^{n} \to \R$ we have
     \begin{equation}\label{eq:many21}
          \bbE \bigg[ \sum_{|u|=n} \e^{- V_u} f(V_{u_1},\ldots,V_{u_n}) \bigg] = \E[f(S_1,\ldots,S_n)],
     \end{equation}
     where we write $u = (u_1,\ldots,u_n)$ and $\E[\cdot]$ denotes the expectation with respect to $\Prob$.
     In particular, for every Borel subset $B \subseteq \R$ we have
     \begin{equation*}
          \bbE[\Zhat_n(B)] = \muhat^{\ast n}(B) = \Prob(S_n \in B).
     \end{equation*}
\end{Lemma}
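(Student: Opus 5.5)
The plan is to prove the many-to-one formula \eqref{eq:many21} by induction on $n$, using only the branching property and the normalisation $m(1)=1$. Since both sides of \eqref{eq:many21} are linear in $f$ and bounded measurable functions are bounded pointwise limits of linear combinations of products $f(x_1,\ldots,x_n)=\prod_{k=1}^n g_k(x_k)$, I first reduce to the case where $f$ is nonnegative and measurable. The left-hand side is then a nonnegative sum, so monotone convergence applies, and a monotone class argument extends the identity from product functions, or directly from indicators of measurable rectangles, to all bounded measurable $f$. The finiteness needed for the signed case comes from applying the nonnegative case to $|f| \le \|f\|_\infty$, which gives the bound $\|f\|_\infty\,\bbE[W_n] = \|f\|_\infty$ once the formula holds for constants.

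For $n=1$ I compute directly:
\begin{equation*}
\bbE\Big[\sum_{j=1}^N \e^{-X_j} f(X_j)\Big] = \int \e^{-x} f(x)\,\mu(\d x) = \int f\,\d\muhat = \E[f(S_1)].
\end{equation*}
The first equality is Campbell's formula for the intensity measure $\mu$ of $\xi$. The measure $\muhat$ is a probability measure precisely because $m(1)=1$.

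For the induction step from $n$ to $n+1$, I write each $u$ with $|u|=n+1$ as $u=vj$ with $|v|=n$, so that $V_{vj}=V_v+X_j(v)$. I then condition on the $\sigma$-field $\F_n$ generated by $(\xi_w)_{|w|<n}$. Given $\F_n$, the point processes $\xi_v$ with $|v|=n$ are i.i.d.\ copies of $\xi$, independent of $\F_n$. This yields
\begin{equation*}
\bbE\Big[\sum_{|v|=n}\e^{-V_v}\sum_{j\le N(v)} \e^{-X_j(v)} f(V_{v_1},\ldots,V_v,V_v+X_j(v))\Big] = \bbE\Big[\sum_{|v|=n}\e^{-V_v} g(V_{v_1},\ldots,V_v)\Big],
\end{equation*}
where $g(x_1,\ldots,x_n)\coloneqq\int f(x_1,\ldots,x_n,x_n+y)\,\muhat(\d y)$. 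Interchanging the conditional expectation with the countable sum over $|v|=n$ is justified by Tonelli's theorem, since everything is nonnegative. The function $g$ is measurable by Fubini's theorem. Applying the induction hypothesis to $g$ turns the right-hand side into $\E[g(S_1,\ldots,S_n)]$. Because the increment $S_{n+1}-S_n$ is independent of $(S_1,\ldots,S_n)$ and has law $\muhat$, this equals $\E[f(S_1,\ldots,S_{n+1})]$.

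The final claim follows by taking $f(x_1,\ldots,x_n)=\1_B(x_n)$, which gives $\bbE[\Zhat_n(B)]=\Prob(S_n\in B)=\muhat^{\ast n}(B)$. The only delicate point is the bookkeeping in the induction step. Particles with $V=\infty$ contribute $\e^{-\infty}=0$, so they can be dropped, and the conditional independence of the generation-$n$ point processes from $\F_n$ has to be used through Tonelli. I would therefore carry out the whole induction for nonnegative $f$ first and pass to bounded $f$ only at the end.
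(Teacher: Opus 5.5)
Your proof is correct. It is the standard argument behind the result: induction on $n$ via the branching property, with Campbell's formula for $n=1$, as in Shi's lecture notes. The paper does not prove the lemma itself but takes it from the cited references. The monotone-class reduction in your first paragraph is superfluous, since your induction already works verbatim for every nonnegative measurable $f$. The bounded case then follows from $f=f^+-f^-$, with both sides finite because the case $f\equiv 1$ gives $\mathbb{E}[W_n]=1$.
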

Following \cite{Shi2015}, we call $(S_n)_{n \in \N}$ the \emph{associated random walk} of the branching random walk $(V_u)_{u \in \I}$.
Instead of summing over all the $n$-th generation particles in \eqref{eq:many21} we are interested in summing over the random set
\begin{equation}\label{eq:coming-gen}
     \cC_t \coloneqq \{u \in \I\,|\,V_u > t,\,\forall v < u:\, V_v \leq t\}
\end{equation}
of particles whose position exceeds $t \geq 0$ for the first time along its ancestral line.
In the context of CMJ processes $\cC_t$ known as the \emph{coming generation} at time $t$ (see \cite{Jagers1989}).
If we define the stopping time
\begin{equation}\label{eq:hitting-time}
     \tau_t \coloneqq \inf\{n \in \N\,:\,S_n > t\},\quad t \geq 0
\end{equation}
then it is straight-forward to show that \eqref{eq:many21} implies
\begin{equation}\label{eq:many214lines}
     \bbE\bigg[ \sum_{u \in \cC_t} \e^{-V_u} f(|u|, V_u)\bigg] = \E[f(\tau_t, S_{\tau_t})\1\{\tau_t < \infty\}]
\end{equation}
for every bounded, measurable $f:\N_0 \times \R \to \R$ (see the proof of \cite[Theorem 10]{Biggins2005}).
Note that if $\E[S_1] > 0$ (which is the case under assumption \eqref{eq:positive-mean}) we have $\tau_t < \infty$ almost surely.

Corresponding to $(\cC_t)_{t \geq 0}$ there exists a filtration $(\F_{\cC_t})_{t \geq 0}$ such that, for all $t,s \geq 0$ and $u \in \I$, $\1\{u \in \cC_t\}$ is $\F_{\cC_s}$-measurable if and only if $t \leq s$, see \cite{Jagers1989,Kyprianou2000}.
\begin{Thm}[{\cite[Theorem 9]{Kyprianou2000}}]\label{thm:Nermans-MG}
     Under the assumptions \eqref{eq:heavy-tailed}, \eqref{eq:positive-mean} and \eqref{eq:Biggins-cond},
     the process
     \begin{equation*}
          Y_t \coloneqq \sum_{u \in \cC_t} \e^{-V_u},\quad t \geq 0
     \end{equation*}
     is a unit mean martingale that converges almost surely and in $\L^1$ to the limit $W$ of Biggins' martingale \eqref{eq:Biggins-MG}.
\end{Thm}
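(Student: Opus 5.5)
The plan is to identify $Y_t$ as a closed martingale, namely $Y_t = \bbE[W \,|\, \F_{\cC_t}]$. Once this identity holds, the martingale property and the unit mean are immediate, and convergence follows from L\'evy's upward theorem. I would carry this out in three steps.

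\emph{Step 1 (unit mean).} Apply \eqref{eq:many214lines} with $f \equiv 1$. This gives $\bbE[Y_t] = \Prob(\tau_t < \infty) = 1$, because $\E[S_1] = c > 0$ by \eqref{eq:positive-mean}. In the same way one sees that $Y_t < \infty$ almost surely.

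\emph{Step 2 (decomposition of $W$ along the line $\cC_t$).} Fix $t$ and $n \in \N$. I would split the generation-$n$ particles according to whether some ancestor lies in $\cC_t$:
\begin{equation*}
     W_n = \sum_{u \in \cC_t,\, |u| \leq n} \e^{-V_u}\, [W_{n-|u|}]_u + \sum_{|v| = n} \e^{-V_v} \1\{V_w \leq t \ \forall w \leq v\}.
\end{equation*}
By Lemma \ref{lemma:many21}, the second sum has expectation $\Prob(S_j \leq t \ \forall j \leq n) \leq \Prob(S_n \leq t)$. This tends to $0$ since $S_n/n \to c > 0$, so the second sum converges to $0$ in $L^1$.

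For the first sum, condition on $\F_{\cC_t}$. The subtrees rooted at the particles $u \in \cC_t$ are i.i.d.\ copies of the whole process and are independent of $\F_{\cC_t}$; this is the strong branching property for optional lines, as in \cite{Jagers1989}. Under \eqref{eq:Biggins-cond} and \eqref{eq:positive-mean}, Biggins' martingale converges in $L^1$. Hence $[W_{n-|u|}]_u \to [W]_u$ in $L^1$, and $\bbE[[W]_u \,|\, \F_{\cC_t}] = 1$.

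To interchange the limit $n \to \infty$ with the random sum, I would bound the error in $L^1$. Conditioning on $\F_{\cC_t}$ gives the bound
\begin{equation*}
     \bbE\Big[\sum_{u \in \cC_t} \e^{-V_u}\, \varepsilon_{n-|u|}\Big] = \E\big[\varepsilon_{n-\tau_t}\big],
\end{equation*}
where $\varepsilon_k := \bbE|W_k - W|$ for $k \geq 0$ and $\varepsilon_k := 2$ for $k < 0$. The right-hand side tends to $0$ by dominated convergence. Putting the pieces together yields
\begin{equation*}
     W = \sum_{u \in \cC_t} \e^{-V_u}\, [W]_u \quad \text{a.s.}, \qquad\text{and hence}\qquad \bbE[W \,|\, \F_{\cC_t}] = Y_t.
\end{equation*}

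\emph{Step 3 (martingale property and convergence).} From Step 2 and the tower property, $\bbE[Y_t \,|\, \F_{\cC_s}] = Y_s$ for $s \leq t$. So $(Y_t)_{t \geq 0}$ is a uniformly integrable martingale. L\'evy's upward theorem then gives $Y_t \to \bbE[W \,|\, \F_{\cC_\infty}]$ almost surely and in $L^1$, where $\F_{\cC_\infty} = \sigma\big(\bigcup_t \F_{\cC_t}\big)$; passing from integer $t$ to real $t$ uses the regularity of the filtration $(\F_{\cC_t})$ via the standard martingale regularisation.

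It remains to check that $W$ is $\F_{\cC_\infty}$-measurable. Any particle $u$ with $V_u < \infty$ has finitely many ancestors, all at finite positions. Therefore $u$ and the point process $\xi_u$ are revealed by $\F_{\cC_t}$ as soon as $t$ exceeds the maximum of $V_w$ over $w \leq u$. Consequently each $W_n$ is $\F_{\cC_\infty}$-measurable, and so is $W = \lim_n W_n$.

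\emph{Main obstacle.} I expect Step 2 to be the hard part. It needs the strong branching property at the random line $\cC_t$, and it needs the exchange of the limit $n \to \infty$ with a sum over a random, possibly infinite, set of particles of unbounded generation. The first thing I would try is the $L^1$ bound via many-to-one stated there, namely controlling the error by $\E[\varepsilon_{n-\tau_t}]$.
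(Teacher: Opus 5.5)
The paper does not prove this statement; it quotes it from \cite[Theorem 9]{Kyprianou2000}. Your route is the standard optional-line argument for results of this kind, and it is correct apart from one justification, discussed below. You show $Y_t=\bbE[W\,|\,\F_{\cC_t}]$ by cutting $W_n$ along the optional line $\cC_t$. You dispose of the unstopped mass with Lemma~\ref{lemma:many21}, and you justify exchanging $n\to\infty$ with the random sum through the bound $\E[\varepsilon_{n-\tau_t}]$ from \eqref{eq:many214lines}.

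Steps~1 and~2 are correct, and so is your argument that $W$ is $\F_{\cC_\infty}$-measurable. Step~1 is actually redundant, since $\bbE[Y_t]=\bbE[W]=1$ already follows from Step~2. $L^1$ convergence through real $t$ is also immediate: for $t\geq k$, $Y_k$ is $\F_{\cC_t}$-measurable, so $\|Y_t-W\|_1\leq \|\bbE[W-Y_k\,|\,\F_{\cC_t}]\|_1+\|Y_k-W\|_1\leq 2\|Y_k-W\|_1$.

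The justification that does not work as written is the passage from integer to real $t$ for almost sure convergence. Martingale regularisation gives a c\`adl\`ag \emph{modification} $\tilde Y$, meaning $\tilde Y_t=Y_t$ a.s.\ for each fixed $t$. It says nothing about the paths of the process $Y_t=\sum_{u\in\cC_t}\e^{-V_u}$ you actually defined. But $\lim_{t\to\infty}Y_t=W$ a.s.\ is a statement about uncountably many $t$ at once.

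What you need is pathwise right-continuity of $t\mapsto Y_t$, and this does hold. $\cC_t$ changes only when $t$ crosses a position $V_u$. Almost surely, the set $\{u\in\I: V_u\leq s\}$ is finite for every $s$. This is the local finiteness the paper derives from \eqref{eq:min-infty} in the proof of Lemma~\ref{lemma:Zhat-wlln}, and monotonicity in $s$ gives it for all $s$ at once. So almost surely the paths of $Y$ are right-continuous step functions. Doob's upcrossing inequality on a countable dense set of times, combined with right-continuity, then gives $Y_t\to W$ a.s.\ through the reals.

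For the way the paper uses the theorem, along deterministic $r(n)\to\infty$, even this is unnecessary. Listing the values of $r(n)$ in increasing order and applying the discrete-time L\'evy theorem suffices.
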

In the context of CMJ processes the process $(Y_t)_{t \geq 0}$ is known as \emph{Nerman's martingale} (c.f.~\cite{Nerman1981}), which is also how we will refer to it.

\subsubsection*{Random walks with regularly varying tail}
Let $(S_n)_{n \in \N}$ be a centered random walk with regularly varying tail of index $-p \leq 0$,
i.e.,
\begin{equation}\label{eq:reg-var}
     \lim_{t \to \infty}\frac{\Prob(S_1 > \lambda t)}{\Prob(S_1 > t)} = \lambda^{-p}
\end{equation}
for all $\lambda > 0$ (see \cite{Bingham1987}).
In particular, the distribution of $S_1$ is \emph{sub-exponential}, that is, for all \emph{fixed} $n \in \N$ we have, as $t \to \infty$,
\begin{equation}\label{eq:subexponential}
     \Prob(S_n > t) = n \Prob(S_1 > t)(1 + o(1)),
\end{equation}
see \cite[Appendix 4]{Bingham1987}.
Nagaev's theorem is a version of \eqref{eq:subexponential} where $n$ and $t$ approach $\infty$ simultaneously.
\begin{Thm}[{\cite[Theorem 1.9]{Nagaev1979}}]\label{thm:Nagaev}
     Let $(S_n)_{n \in \N}$ be a random walk with regularly varying tail of index $p > 2$, mean zero and unit variance.
     Fix $a > \sqrt{p-2}$.
     Then we have
     \begin{equation}\label{eq:Nagaev}
          \Prob(S_n > t) = n \Prob(S_1 > t)(1 + o(1))
     \end{equation}
     as $n \to \infty$ uniformly in $t \geq a \sqrt{n \log n}$, i.e.,
     \begin{equation*}
          \sup_{t \geq a \sqrt{n \log n}} \Big| \frac{\Prob(S_n > t)}{n \Prob(S_1 > t)} - 1 \Big| \convn 0.
     \end{equation*}
\end{Thm}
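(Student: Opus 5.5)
We prove the two inequalities $\liminf \ge 1$ and $\limsup \le 1$ for $\Prob(S_n > t)/(n\Prob(S_1>t))$ separately, uniformly in $t \ge a\sqrt{n\log n}$, following the one-big-jump heuristic. Write $X_1,\dots,X_n$ for the increments and $\Fbar(t) = \Prob(S_1 > t)$. Two facts are used throughout. First, for such $t$ Potter's bounds give $n\Fbar(t) \le n t^{-p+o(1)} \to 0$ uniformly, because $p>2$ and $t \gg \sqrt n$. Second, $\Prob(|S_{n-1}| > \eps t) \le n/(\eps t)^2 \to 0$ uniformly, by Chebyshev's inequality.

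\emph{Lower bound.} Fix $\eps>0$. We use the inclusion
\begin{equation*}
\{S_n > t\} \supseteq \bigcup_{i=1}^n \{X_i > (1+\eps)t,\ S_n - X_i > -\eps t\}.
\end{equation*}
By Bonferroni's inequality and independence, $\Prob(S_n>t)$ is at least
\begin{equation*}
n\Fbar((1+\eps)t)\,\Prob(S_{n-1} > -\eps t) - n^2 \Fbar((1+\eps)t)^2 .
\end{equation*}
The second term is $o(n\Fbar(t))$ because $n\Fbar(t)\to 0$. In the first term, $\Prob(S_{n-1}>-\eps t)\to1$. Uniform convergence in regular variation gives $\Fbar((1+\eps)t)/\Fbar(t) \to (1+\eps)^{-p}$ uniformly in large $t$. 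Letting $\eps \downarrow 0$ yields $\liminf \ge 1$ uniformly.

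\emph{Upper bound.} Fix small $\eps,\delta>0$ and set $y = \eps t$. We split $\{S_n>t\}$ according to the number of increments exceeding $y$.
\begin{itemize}
\item[(i)] At least two increments exceed $y$. The probability is at most $n^2\Fbar(\eps t)^2 = o(n\Fbar(t))$, by regular variation and $n\Fbar(t)\to0$.
\item[(ii)] Exactly one increment, $X_i$, exceeds $y$. If $X_i > (1-\delta)t$ we get at most $n\Fbar((1-\delta)t) \sim (1-\delta)^{-p} n\Fbar(t)$. Otherwise $S_n - X_i > \delta t$, so by independence the probability is at most $n\Fbar(\eps t)\,\Prob(S_{n-1} > \delta t) = O(n\Fbar(t))\cdot o(1)$.
\item[(iii)] All increments are at most $y$. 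This truncated term is the main obstacle.
\end{itemize}

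For (iii) I would use a Fuk--Nagaev type exponential Chebyshev bound:
\begin{equation*}
\Prob(S_n > t,\ \max_i X_i \le y) \le \e^{-\lambda t}\big(\E[\e^{\lambda X}\1\{X\le y\}]\big)^n .
\end{equation*}
I would expand $\e^{\lambda x} \le 1 + \lambda x + \tfrac{\lambda^2x^2}{2}\e^{\lambda y}$ on $\{x \le y\}$ and use mean zero. I would then take $\lambda = (1-\eta)\, t/n$ in the moderate regime. The factor $\e^{\lambda y} = \e^{(1-\eta)\eps t^2/n}$ is where I expect difficulty: for $t$ near $a\sqrt{n\log n}$ it equals $n^{(1-\eta)\eps a^2}$, which is not close to $1$. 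The cure is to split $\E[X^2\e^{\lambda X}\1\{X\le y\}]$ into $\{X \le \sqrt n/\log n\}$ and $\{\sqrt n/\log n < X \le y\}$. On the first piece $\e^{\lambda X} = 1+o(1)$, so it contributes $(1+o(1))\sigma^2$. On the second piece I would bound the contribution by integrating $\Fbar$ against $x^2\e^{\lambda x}$ and using $\Fbar(x) \le x^{-p+o(1)}$. After taking logarithms, the second piece is negligible if $\eps$ is small relative to $\eta$.

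Choosing $\eta$ and then $\eps$ small, the bound becomes $\exp(-(1-\eta')t^2/(2n)) \le n^{-(1-\eta')a^2/2}$. This must be compared with $n\Fbar(t) \ge n\, t^{-p-o(1)} = n^{1-p/2 - o(1)}$ at the boundary $t = a\sqrt{n\log n}$. The comparison succeeds exactly because $a^2 > p-2$, after fixing $\eta'$ with $(1-\eta')a^2 > p-2$.

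For larger $t$ (say $t \ge n$) the same bound with $\lambda = \kappa(\log n)/y$ gives something like $(C n/(t y))^{t/y}$, of polynomial order $t^{-1/\eps}$ up to logarithmic factors. This is $o(n\Fbar(t))$ once $1/\eps > p$.

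Summing (i)--(iii) and letting $\delta,\eps\to0$ gives $\limsup\le 1$ uniformly, which completes the proof. The delicate step is making (iii) uniform across the transition between the Gaussian and the polynomial regimes. I would handle it by interpolating $\lambda = \min\{(1-\eta)t/n,\ \kappa\log(t/\sqrt n)/y\}$ and checking both regimes.
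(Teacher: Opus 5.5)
The paper does not prove this statement; it quotes it from \cite{Nagaev1979} and names \cite{Denisov2008} as a modern proof. Your lower bound and steps (i)--(ii) of the upper bound are correct. Reducing to the truncated term (iii) via an exponential Chebyshev bound is the classical route. There is a genuine gap, however: step (iii) carries the whole content of the theorem, and there your choices of $\lambda$ do not give a bound that is uniform in $t \ge a\sqrt{n\log n}$.

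Write $t = s\sqrt n$. The proposal breaks down in three places.

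\emph{The moderate-regime choice.} With $\lambda=(1-\eta)t/n$ and $y=\eps t$, the contribution of $\{\sqrt n/\log n < X \le y\}$ to the exponent is, up to polylogarithmic factors, $n\Fbar(\eps t)\,\e^{(1-\eta)\eps s^2}$. This is small only while $(1-\eta)\eps s^2$ stays below $\log\frac{1}{n\Fbar(\eps t)} \approx (\frac p2-1)\log n$, i.e.\ only for $t \le C_\eps\sqrt{n\log n}$. Taking $\eps$ small handles the boundary point, not the whole range.

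\emph{The large-$t$ choice.} $\lambda=\kappa\log n/y$ gives $\e^{-\lambda t}=n^{-\kappa/\eps}$. This does not decay in $t$, so it is not $o(n\Fbar(t))$ once $t \ge n^{(1+\kappa/\eps)/p}$. The bound $(Cn/(ty))^{t/y}$ you quote is Fuk--Nagaev, and it requires $\lambda = y^{-1}\log(1+ty/n)$.

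\emph{The interpolation.} $\lambda=\min\{(1-\eta)t/n,\ \kappa\log(t/\sqrt n)/y\}$ fails outright. The ratio of the second entry to the first is $\kappa\log s/(\eps(1-\eta)s^2)\to 0$ uniformly over $s \ge a\sqrt{\log n}$. So the minimum always selects the second entry, and the Gaussian regime is never used. At $t=a\sqrt{n\log n}$ you then get only $\e^{-\lambda t}=s^{-\kappa/\eps}=(\log n)^{-O(1)}$, against a target $n\Fbar(t)=n^{1-p/2+o(1)}$. Even using the two branches separately, any constant $\kappa$ leaves a window $C_\eps\sqrt{n\log n}\le t\le n^{1/2+\theta}$ uncovered.

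The missing term is $(\frac p2-1)\log n$. The right cap is $\lambda y \le \Lambda := \log\frac{1}{n\Fbar(y)} = p\log s + (\frac p2-1)\log n + o(\log t)$.

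This can be repaired along your lines by taking $\lambda=\min\{(1-\eta)t/n,\ \Lambda/y\}$ and $z=\sqrt n/\log n$.
\begin{itemize}
\item One has $\E[\e^{\lambda X};X\le y]\le 1+\frac{\lambda^2}{2}\e^{\lambda z}+\E[\e^{\lambda X};z<X\le y]$, and one checks $\lambda z\to 0$ uniformly in both regimes.
\item Integrating by parts with Potter's bounds gives $n\E[\e^{\lambda X};z<X\le y]\lesssim n\Fbar(y)\e^{\lambda y}+o(1)=O(1)$, because $\lambda y\le\Lambda$.
\item The first regime forces $t=O(\sqrt{n\log n})$. There you recover $\exp(-(1-\eta^2+o(1))t^2/(2n))$, which is $o(n\Fbar(t))$ uniformly once $(1-\eta^2)a^2>p-2$.
\item In the second regime, $n\lambda^2/2\le(1-\eta)\lambda t/2$, so the exponent is at most $-\Lambda/(2\eps)+O(1)$. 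The bound is then $O\big((n\Fbar(\eps t))^{1/(2\eps)}\big)$, which is $o(n\Fbar(t))$ for $\eps<1/2$. This holds uniformly in all $t\ge a\sqrt{n\log n}$, including very large $t$, since $n\Fbar(\eps t)\to 0$ uniformly and $\Fbar(\eps t)/\Fbar(t)\to\eps^{-p}$.
\end{itemize}
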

See also \cite{Denisov2008} for a modern (and more general) proof of this result.
Note that regularly varying functions are in particular \emph{long-tailed}, i.e., $S_1$ with regularly varying tail satisfies
\begin{equation}\label{eq:long-tailed}
     \Prob(S_1 > t + x) \asympeq{t}{\infty} \Prob(S_1 > t)
\end{equation}
for all $x \in \R$.
Moreover, if $(S_n)_{n \in \N}$ is a random walk with regularly varying tail of index $-p < -2$ that has mean $c = \E[S_1]$ and variance $\mathbf{V}[S_1] = \sigma^2$, then Theorem \ref{thm:Nagaev} combined with \eqref{eq:long-tailed} implies that we have
\begin{equation*}
	\Prob(S_n > nc + t) = n \Prob(S_1 > t)(1 + o(1))
\end{equation*}
as $n \to \infty$ uniformly in $t \geq a \sigma \sqrt{n \log n}$.
\subsection{Overview}\label{sec:overview}
Our proof of Theorem \ref{thm:brw-large-deviations} has three parts.
First (Section \ref{subsec:proof-part1}) we group the $n$-th generation particles $u \in \I$ with $V_u > nc + t_n$ according to their ancestor whose position first exceeds $r(n) < nc + t_n$.
This gives the decomposition
\begin{equation}\label{eq:Zhat-decomp}
     \Zbar_n(nc+t_n) = \sum_{\substack{u \in \cC_{r(n)} \\ |u| \leq n}} \e^{-V_u} [\Zbar_{n-|u|}(nc+t_n-V_u)]_u
\end{equation}
where $\cC_{r(n)}$ is defined in \eqref{eq:coming-gen} and we have
\begin{equation*}
     [\Zbar_n(t)]_u = \sum_{|v| = n} \e^{-(V_{uv} - V_u)}\1\{V_{uv} - V_u > t\},\quad u \in \I,n \in \N,t \in \R.
\end{equation*}
This decomposition is possible since
every particle $u$ from generation $n$ with $V_u > nc + t$ must itself be in $\cC_{r(n)}$ or have an ancestor in $\cC_{r(n)}$.
Then we use the many-to-one lemma to get rid of summands in \eqref{eq:Zhat-decomp} that correspond to particles with large generations or large overshoots (see Lemma \ref{lemma:approx}). 

In the second part (Section \ref{subsec:proof-part2}, Lemma \ref{lemma:Zhat-wlln}) we use a weak law of large numbers to approximate the right-hand side of \eqref{eq:Zhat-decomp} by
\begin{equation*}
     \sum_{\substack{u \in \cC_{r(n)} \\ |u| \leq n}} \e^{-V_u} \bbE[[\Zbar_{n-|u|}(nc+t_n-V_u)]_u\,|\,V_u] = \sum_{\substack{u \in \cC_{r(n)}\\ |u| \leq n}} \e^{-V_u} \Fbar_{n - |u|}\big(t_n - (V_u-|u|c) \big).
\end{equation*}
where $\Fbar_n(t) \coloneqq \Prob(S_n - nc > t)$, $t \in \R$ is the tail of the distribution of $S_n - nc$.
Finally, in the third part (Section \ref{subsec:proof-part3}) we use Theorem \ref{thm:Nagaev} to approximate $\Fbar_n(t) \approx n \Fbar(t)$ for large $n$ and $t$, so that we obtain
\begin{equation*}
     \Zbar_n(nc+t_n) \approx \sum_{\substack{u \in \cC_{r(n)} \\ |u| \leq n}} \e^{-V_u} (n-|u|)\Fbar(t_n - (V_u -|u|c) ).
\end{equation*}
Then we use regular variation of $\Fbar$ to end up with an approximation to Nerman's martingale which converges to the limit of Biggins' martingale.
\subsection{Proof of Theorem \ref{thm:brw-large-deviations} -- Part 1: Preparation}\label{subsec:proof-part1}
For the rest of this section, let \eqref{eq:heavy-tailed} through \eqref{eq:mu-reg-var} be satisfied, fix $a > \sqrt{p-2}$ and a sequence $(t_n)_{n \in \N}$ with $t_n \geq a \sigma \sqrt{n \log n}$ for all $n \in \N$.
Occasionally we need to distinguish the cases \eqref{item:case1} and \eqref{item:case2} from Theorem \ref{thm:brw-large-deviations}.
The proof is split into several lemmas to improve readability.

First we argue that if suffices to show that \eqref{eq:brw-large-deviations} holds in probability.
Note that the many-to-one formula (Lemma \ref{lemma:many21}) and Nagaev's theorem (Theorem \ref{thm:Nagaev}) give
\begin{equation*}
     \bbE\Big[ \frac{\Zbar_n(nc + t_n)}{n \Fbar(t_n)} \Big] = \frac{\Prob(S_n > nc + t_n)}{n \Prob(S_1 > t_n)} \convn 1 = \bbE[W].
\end{equation*}
Therefore, the claimed convergence in $L^1$ follows from convergence in probability (see e.g.~\cite[Theorem 5.12]{Kallenberg2021}).
In case \eqref{item:case2}, the claimed convergence in $L^q$ for all $q < 2 - \gamma^{-1}$ in turn follows from convergence in probability combined with Proposition \ref{prop:stochdom} below.

We define two sequences $(m(n))_{n \in \N}$ and $(r(n))_{n \in \N}$.
In case \eqref{item:case1}, $r(n)$ is already defined.
In case \eqref{item:case2} we choose for $r(n)$ any sequence with $r(n) \uparrow \infty$ and $r(n) \in o(n)\cap o(t_n)$, for instance $r(n) \coloneqq \frac{n \wedge t_n}{\log n}$.
In both cases, let
\begin{align}
     m(n) \coloneqq \Big\lceil\frac{2r(n)}{c} \Big\rceil,\quad n \in \N. \label{eq:m-def}
\end{align}
This definition is taylored to Lemma \ref{lemma:shaving} below. 
Furthermore, fix $\eps > 0$ such that $(1-\eps) a > \sqrt{p-2}$.
During the proof we will often use that due to regular variation of $\Fbar$ (by assumption \eqref{eq:mu-reg-var}) we have
\begin{equation*}
     \frac{\Fbar((1-\eps)t_n)}{\Fbar(t_n)} \conv{n}{\infty} (1-\eps)^{-p}.
\end{equation*}
Therefore, there exists a constant $K \in (0,\infty)$ such that for all $n \in \N$ we have
\begin{equation}\label{eq:mufrac-bound}
     \frac{\Fbar((1-\eps)t_n)}{\Fbar(t_n)} \leq K.
\end{equation}
To simplify notation, for individuals $u$ with $|u| \leq n$ we define
\begin{equation*}
     X_u(n) \coloneqq [\Zbar_{n-|u|}(nc+t_n-V_u)]_u = \sum_{|v| = n - |u|}\e^{-(V_{uv}-V_u)} \1\{V_{uv} > nc + t_n \}
\end{equation*}
whereas for $|u| > n$ we set $X_u(n) \coloneqq 0$.
We claim that for all $n \in \N$ with $r(n) < nc + t_n$ we have the decomposition
\begin{equation}\label{eq:Z-branching}
     \Zbar_n(nc + t_n) = \sum_{|u| = n} \e^{-V_u}\1\{V_u > nc + t_n\} = \sum_{u \in \cC_{r(n)}} \e^{-V_u} X_u(n)
\end{equation}
where $\cC_{r(n)}$ is defined in \eqref{eq:coming-gen}.
Indeed, every non-zero summand in the (implicit) double sum on the right-hand side corresponds to an individual $u$ with $|u|=n$ and $V_u > nc + t_n$ and thus belongs to the sum of the left-hand side.
Conversely, every individual $u$ with $|u| = n$ and $V_u > nc + t_n$ has an ancestor $v \leq u$ with $v \in \cC_{r(n)}$ due to $r(n) < nc + t_n$, so its corresponding summand appears in $X_v(n)$.

Using the many-to-one formula we obtain for every fixed individual $u$ with $|u| \leq n$
\begin{align}\label{eq:EX}
     \bbE[X_u(n)\,|\,V_u] &= \Fbar_{n-|u|}\big(t_n - (V_u - |u|c)\big)
\end{align}
where $\Fbar_n(t) = \Prob(S_n - nc > t)$, $t \in \R$ should be recalled.
\begin{Def}
Let $(X(n,T))_{n\in \N,T > 0}$ be a stochastic process and $f:\N \times (0,\infty) \to (0,\infty)$ be a function.
We write $X(n,T) \in \error{f(n,T)}$ if, for all $\delta > 0$,
\begin{equation}\label{eq:error}
     \lim_{T \to \infty} \limsup_{n \to \infty} \bbP\Big(\Big|\frac{X(n,T)}{f(n,T)}\Big| \geq \delta \Big) = 0.
\end{equation}
Moreover, if $(Y(n,T))_{n\in \N,T > 0}$ is another stochastic process then we write $X(n,T) = Y(n,T) + \error{f(n,T)}$ if $X(n,T) - Y(n,T) \in \error{f(n,T)}$.
\end{Def}
\begin{Rem}\label{rem:error}
(a)
Note that
\begin{equation*}
     \lim_{T \to \infty} \limsup_{n \to \infty} \bbE\Big|\frac{X(n,T)}{f(n,T)}\Big| = 0
\end{equation*}
implies $X(n,T) \in \error{f(n,T)}$ by Markov's inequality, which we will frequently use without any further comment.
\vspace{0.2cm}

\noindent (b)
If $(X_n)_{n \in \N}$ and $(Y_n)_{n \in \N}$ are sequences of random variables and we have $X_n = X(n,T) + \error{1}$ and $Y_n = X(n,T) + \error{1}$ then if follows $X_n - Y_n \to 0$ in probability as $n \to \infty$.
Indeed, for every $\delta > 0$ and $T > 0$ we have
\begin{equation*}
     \bbP(|X_n - Y_n| > \delta) \leq \bbP(|X_n - X(n,T)| > \delta/2) + \bbP(|Y_n - X(n,T)| > \delta/2).
\end{equation*}
Applying $\limsup_{n \to \infty}$ and then taking $T \to \infty$ gives the claim.
\end{Rem}
\vspace{0.2cm}
\begin{Lemma}\label{lemma:approx}
     For fixed $T > 0$ let
     \begin{equation}\label{eq:coming-gen-cutoff}
          \cC_{t}^T \coloneqq \{u \in \cC_t\,:\, V_u \leq t + T\},\quad t > 0.
     \end{equation}
     Then the following hold, with $m(n)$, $r(n)$ as above:
     \begin{align}
          \Zbar_n(nc+t_n) &= \sum_{\substack{u \in \cC_{r(n)}^T \\ |u| \leq m(n)}} \e^{-V_u} X_u(n) + \error{n\Fbar(t_n)}, \label{eq:Zhat-approx} \\
          Y_{r(n)} &= \sum_{\substack{u \in \cC_{r(n)}^T \\ |u| \leq m(n)}} \e^{-V_u} + \error{1}.\label{eq:Nerman-MG-approx}
     \end{align}
\end{Lemma}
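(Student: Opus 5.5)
Both statements will follow from first-moment bounds via Remark \ref{rem:error}(a), since every discarded term is nonnegative. By \eqref{eq:Z-branching}, the difference between $\Zbar_n(nc+t_n)$ and the truncated sum in \eqref{eq:Zhat-approx} equals
\begin{equation*}
     \sum_{u \in \cC_{r(n)}} \e^{-V_u} X_u(n)\,\1\{V_u > r(n) + T \text{ or } |u| > m(n)\}.
\end{equation*}
Conditioning on $\F_{\cC_{r(n)}}$ and using \eqref{eq:EX} together with \eqref{eq:many214lines}, its expectation is
\begin{equation*}
     \E\Big[\Fbar_{n-\tau}\big(t_n - (S_\tau - \tau c)\big)\,\1\{\tau \leq n,\ S_\tau > r(n)+T \text{ or } \tau > m(n)\}\Big],
\end{equation*}
where $\tau = \tau_{r(n)}$. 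Similarly, the error in \eqref{eq:Nerman-MG-approx} has expectation
\begin{equation*}
     \Prob\big(S_\tau > r(n)+T\big) + \Prob\big(\tau > m(n)\big).
\end{equation*}
So the plan reduces to renewal-type estimates for the associated random walk, and the hard part is bounding $\Fbar_{n-\tau}(\cdot)$ uniformly.

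For the $\tau$ term, $m(n) = \lceil 2r(n)/c\rceil$ and $r(n)\to\infty$ give $\Prob(\tau > m(n)) \leq \Prob(S_{m(n)} \leq r(n)) \to 0$ by the weak law of large numbers, since $S_{m(n)} \approx 2r(n)$. For the overshoot term, the walk has positive drift and finite variance ($p>2$), so the overshoot $S_\tau - r(n)$ converges in distribution to the stationary ladder-height overshoot law as $r(n)\to\infty$. This law is tight because the ladder height has finite mean when $\E[S_1^2]<\infty$. Hence $\lim_{T\to\infty}\limsup_n \Prob(S_\tau - r(n) > T) = 0$, which proves \eqref{eq:Nerman-MG-approx}.

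For \eqref{eq:Zhat-approx} I need the bound
\begin{equation*}
     \Fbar_{k}\big(t_n - (S_\tau-\tau c)\big) \lesssim n\Fbar(t_n),
\end{equation*}
uniformly in $k \leq n$ on the relevant event, so that the probability bounds above transfer. I split according to the size of $S_\tau - \tau c$.

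\textbf{Moderate case:} $S_\tau - \tau c \leq \eps t_n$. Here the argument $t_n - (S_\tau-\tau c)$ is at least $(1-\eps)t_n \geq (1-\eps)a\sigma\sqrt{n\log n} \geq (1-\eps)a\sigma\sqrt{k\log k}$. Nagaev's theorem (in its uniform form, noting that $\Fbar_k(t)\le \Fbar_n$-type bounds hold, and that the supremum over $k\le n$ is handled by monotonicity in $t$ plus a finite-$k$ bound via subexponentiality) gives a bound $C k\Fbar((1-\eps)t_n) \leq CKn\Fbar(t_n)$ by \eqref{eq:mufrac-bound}. Multiplied by the vanishing probabilities above, this is $\error{n\Fbar(t_n)}$.

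\textbf{Large case:} $S_\tau - \tau c > \eps t_n$. I bound $\Fbar_{n-\tau}$ by $1$ and show $\Prob(S_\tau - \tau c > \eps t_n) \in o(n\Fbar(t_n))$. Since $\tau \leq m(n)$ may be assumed, or handled by Chebyshev, and $r(n) = o(t_n)$, this event forces either $S_\tau - r(n) > \eps t_n/2$ or a large deviation of $\tau$. An overshoot exceeding $x$ requires a single jump of size at least $x$ before $\tau$. Since $\E\tau \lesssim r(n)$, the overshoot bound is at most $\E[\tau]\,\Fbar(\eps t_n/2) \lesssim r(n)\Fbar(t_n) = o(n\Fbar(t_n))$ because $r(n) = o(n)$. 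The event $\tau > m(n)$ with huge negative drift is controlled because $\tau c > 0$ only helps.

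I expect the main obstacle to be this uniform use of Nagaev's theorem for $k = n - \tau$ ranging over all of $\{0,\dots,n\}$, including small $k$, where one needs $\sup_{k\le n,\, t\geq (1-\eps)a\sigma\sqrt{n\log n}} \Fbar_k(t)/(k\Fbar(t))<\infty$. I would obtain it from Nagaev's theorem for large $k$ (with $\sqrt{n\log n}\ge\sqrt{k\log k}$) and from the uniform subexponential bound for bounded $k$.
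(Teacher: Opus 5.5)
Your proposal is correct and follows essentially the paper's route. You reduce both claims to first moments via the many-to-one formula along the stopping line $\cC_{r(n)}$, then control the resulting random-walk expression by splitting into three parts: $\{\tau_{r(n)} > m(n)\}$, a large overshoot, and the moderate case, where the uniform Nagaev bound gives $\Fbar_{n-\tau}(\cdot) \leq 2Kn\Fbar(t_n)$. The paper packages these walk estimates as Lemma \ref{lemma:shaving}, after rewriting the expectations as $\Prob(S_n > nc+t_n,\, R_{r(n)} > T)$ etc.\ via the strong Markov property.

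The only differences are technical. You treat $\{\tau_{r(n)} > m(n)\}$ with the bound $\Fbar_k(t) \leq 2k\Fbar(t)$ uniformly over all $k \leq n$, obtained exactly as the paper does in Section~\ref{sec:stochdom}, rather than the paper's Markov step at the fixed time $m(n)$. You bound the big-jump event by $\E[\tau_{r(n)}]\,\Fbar(\eps t_n - r(n))$ rather than the paper's union bound $m(n)\Fbar(\eps t_n - r(n))$ over $j \leq m(n)$. Your bound needs $\E[\tau_{r(n)}] \in \cO(r(n))$, which you assert without proof; it follows from the elementary renewal theorem or a truncation-plus-Wald argument.
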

The proof of Lemma \ref{lemma:approx} builds on the following estimates for random walks.
\begin{Lemma}\label{lemma:shaving}
     Let $(S_n)_{n \in \N}$ be the associated random walk.
     Fix $a > \sqrt{p-2}$ and a sequence $(t_n)_{n \in \N}$ such that $t_n \geq a \sigma \sqrt{n \log n}$ for all $n \in \N$.
     Further let $(r(n))_{n \in \N}, (m(n))_{n \in \N}$ be sequences that satisfy $r(n), m(n) \uparrow \infty$, $m(n) \in \N$, $m(n) \in o(n)$, $r(n) \in o(n) \cap o(t_n)$, and
     \begin{equation*}
          \limsup_{n \to \infty} \frac{r(n)}{m(n)} < c.
     \end{equation*}
     \begin{enumerate}[(a)]
          \item Let $\tau_r \coloneqq \inf\{n \in \N\,:\, S_n > r\}$, $r \in \R$. Then
          \begin{equation}\label{eq:tau-bound}
               \Prob(S_n > nc + t_n,\,\tau_{r(n)} > m(n)) \in o(n\Fbar(t_n)).
          \end{equation}
          \item 
          Let $R_s \coloneqq S_{\tau_s} - s$ be the overshoot at $s > 0$.
          Then we have
          \begin{equation}\label{eq:overshoot-bound}
               \lim_{T \to \infty} \limsup_{n \to \infty} \frac{\Prob(S_n > nc + t_n,\,R_{r(n)} > T)}{n \Fbar(t_n)} = 0.
          \end{equation}
     \end{enumerate}
\end{Lemma}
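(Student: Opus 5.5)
Both parts rest on the strong Markov property of the associated random walk at $\tau_{r(n)}$, combined with the uniform Nagaev asymptotics from Theorem \ref{thm:Nagaev}. After the stopping time, $S_{\tau+k}-S_\tau$ is again a copy of the random walk. So on $\{\tau_{r(n)}=j,\,S_{\tau}=s\}$ the conditional probability of $\{S_n>nc+t_n\}$ is $\Fbar_{n-j}(t_n-(s-jc))$.

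\emph{Part (a).} The assumption $\limsup r(n)/m(n)<c$ gives $\eta>0$ with $r(n)\le (c-\eta)m(n)$ for large $n$. On $\{\tau_{r(n)}>m(n)\}$ we then have $S_{m(n)}\le r(n)$, so $S_{m(n)}-m(n)c\le -\eta m(n)$. I will bound
\begin{equation*}
\Prob(S_n>nc+t_n,\,\tau_{r(n)}>m(n))\le \Prob\big(S_{m(n)}-m(n)c\le -\eta m(n),\; S_n-S_{m(n)}-(n-m(n))c> t_n+\eta m(n)\big).
\end{equation*}
By independence of the increments this factorises. The second factor is at most $\Fbar_{n-m(n)}(t_n)$, which by Theorem \ref{thm:Nagaev} is $\cO(n\Fbar(t_n))$, since $t_n\ge a\sigma\sqrt{(n-m(n))\log(n-m(n))}$. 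The first factor tends to $0$ by the weak law of large numbers because $m(n)\to\infty$. Their product is therefore $o(n\Fbar(t_n))$.

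\emph{Part (b).} I decompose over $\tau_{r(n)}=j$ and $S_{\tau}=r(n)+x$ with $x>T$. Using (a), it suffices to restrict to $j\le m(n)$. The conditional probability is then $\Fbar_{n-j}(t_n-r(n)-x+jc)$.
\begin{enumerate}[(i)]
\item For $x\le \eps t_n/2$: since $r(n)\in o(t_n)$, the argument is at least $(1-\eps)t_n$ for large $n$. Applying Theorem \ref{thm:Nagaev} with $(1-\eps)a>\sqrt{p-2}$ and then \eqref{eq:mufrac-bound}, it is at most $2Kn\Fbar(t_n)$. This contributes at most $2Kn\Fbar(t_n)\,\Prob(R_{r(n)}>T)$.
\item For $x>\eps t_n/2$: I bound the probability trivially by $\Prob(R_{r(n)}>\eps t_n/2)$.
\end{enumerate}

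The main obstacle is controlling the overshoot $R_{r(n)}$ of a positive-drift walk with a finite-variance heavy tail, uniformly in the level $r(n)\to\infty$. For (i) I plan to use the bound $\Prob(R_s>T)\le \sum_{k}\Prob(S_{k-1}\le s,\,X_k>T+(s-S_{k-1}))$, which by the renewal theorem (expected number of visits per unit interval bounded by $C$) gives $\sup_s\Prob(R_s>T)\le C\int_T^\infty\Fbar(y)\,\d y\to0$ as $T\to\infty$ because $p>1$. For (ii) the same bound gives $C\int_{\eps t_n/2}^\infty \Fbar(y)\,\d y\sim C' t_n\Fbar(t_n)$ by Karamata. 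This is not obviously $o(n\Fbar(t_n))$ when $t_n\gg n$.

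To handle (ii) I will instead keep the event $S_n>nc+t_n$. I split on whether the overshoot jump happens before time $m(n)$ with size exceeding $\eps t_n/2$. On $\{\tau_{r(n)}\le m(n)\}$, the jump occurs at some $k\le m(n)$ with $X_k>\eps t_n/2$, so the probability is at most $m(n)\Fbar(\eps t_n/2)\le C m(n)\Fbar(t_n)=o(n\Fbar(t_n))$, using $m(n)\in o(n)$ and regular variation. Combining (i) and (ii), dividing by $n\Fbar(t_n)$, and letting $n\to\infty$ and then $T\to\infty$ yields \eqref{eq:overshoot-bound}.
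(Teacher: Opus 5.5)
Your proof is correct and follows essentially the same route as the paper: in (a) you factor at time $m(n)$ and combine a law of large numbers with Nagaev's bound. Using $r(n)\le (c-\eta)m(n)$ even gives the slightly cleaner factor $\Fbar_{n-m(n)}(t_n)$. In (b) you use the paper's three-way split: $\tau_{r(n)}>m(n)$ handled by (a); an overshoot above $\eps t_n/2$ forcing one increment of that size before time $m(n)$, which costs $m(n)\Fbar(\eps t_n/2)=o(n\Fbar(t_n))$; and the moderate overshoot bounded by $2Kn\Fbar(t_n)\Prob(R_{r(n)}>T)$.

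The only real difference is how you get tightness of the overshoot. You derive $\sup_s\Prob(R_s>T)\le C\sum_{k\ge 0}\Fbar(T+k)$ from the uniformly bounded renewal measure of the transient walk, whereas the paper cites convergence in distribution of $R_s$. Your argument is self-contained and gives exactly the uniform tightness that is needed.
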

\begin{proof}[Proof of Lemma \ref{lemma:shaving}]

\noindent
(a)
As a consequence of the central limit theorem
we have
\begin{equation}\label{eq:S<r}
     \Prob(S_{m(n)} \leq r(n)) = \Prob\bigg(\frac{S_{m(n)} - m(n)c}{\sqrt{m(n)}} \leq \sqrt{m(n)}\Big(\frac{r(n)}{m(n)} - c\Big) \bigg) \convn 0.
\end{equation}
We write $\Prob\!_x$ for the law of the random walk starting from $x \in \R$, where $\Prob\!_0 = \Prob$. 
Using the Markov property at time $m(n)$ we obtain for all $n \in \N$ such that $n - m(n) > 0$
\begin{align}\label{eq:shaving-lemma-markov}
     \begin{split}
          &\Prob(S_n > nc + t_n,\,\tau_{r(n)} > m(n)) \\
          \leq\quad &\Prob(S_n > nc + t_n,\,S_{m(n)} \leq r(n)) \\  
          = \quad &\E[\1\{S_{m(n)} \leq r(n)\} \Prob\!_{S_m(n)}(S_{n - m(n)} > nc + t_n)] \\
          \leq \quad &\Prob(S_{m(n)} \leq r(n)) \Prob\!_{r(n)}(S_{n - m(n)} > nc + t_n) \\
          = \quad &\Prob(S_{m(n)} \leq r(n)) \Fbar_{n-m(n)}(m(n)c + t_n - r(n)) \\
          \leq \quad &\Prob(S_{m(n)} \leq r(n)) \Fbar_{n-m(n)}(t_n - r(n))
     \end{split}
\end{align}
Recall that we fixed $\eps \in (0,1)$ such that $a(1-\eps) > \sqrt{p-2}$.
By Theorem \ref{thm:Nagaev} there exists $n_0 \in \N$ such that for all $n \geq n_0$ and $t \geq (1-\eps)a \sigma \sqrt{n \log n}$ we have
\begin{equation}\label{eq:frac-const-bound}
     \frac{\Fbar_n(t)}{n \Fbar(t)} \leq 2.
\end{equation}
Since $r(n) \in o(t_n)$ there exists $n_1 \in \N$ such that $t_n - r(n) \geq (1-\eps)t_n$ for all $n \geq n_1$.
Therefore, for $n \geq n_1$ we have
\begin{align*}
     t_n - r(n) &\geq (1-\eps)t_n \geq (1-\eps)a \sigma \sqrt{n \log n} \\
     &\geq (1-\eps)a \sigma \sqrt{(n-m(n)) \log (n-m(n))}.
\end{align*}
Thus for $n \geq n_1$ such that $n - m(n) \geq n_0$ (which is the case for all sufficiently large $n$ due to $m(n) \in o(n)$) we find
\begin{align*}
     &\frac{\Fbar_{n-m(n)}(t_n - r(n)}{n \Fbar(t_n)} \\
     =\quad &\frac{n - m(n)}{n} \frac{\Fbar(t_n - r(n))}{\Fbar(t_n)} \frac{\Fbar_{n - m(n)}(t_n - r(n))}{(n-m(n))\Fbar(t_n - r(n))} \\
     \leq \quad &2 \frac{\Fbar((1-\eps)t_n)}{\Fbar( t_n)}.
\end{align*}
By \eqref{eq:mufrac-bound}, the right-hand side is bounded by $2 K$.
Thus we conclude that
\begin{equation*}
     \Fbar_{n-m(n)}(t_n - r(n) \in \cO(n \Fbar( t_n)).
\end{equation*}
Combining this with \eqref{eq:S<r}, we infer \eqref{eq:tau-bound} from \eqref{eq:shaving-lemma-markov}.

\noindent
(b)
We have
\begin{align}
     \begin{split}\label{eq:overshoot-split}
          &\Prob(S_n > nc + t_n,\,R_{r(n)} > T) \\
          =\quad &\Prob(S_n > nc + t_n,\,R_{r(n)} > T,\,\tau_{r(n)} > m(n)) \\
          +\quad &\Prob(S_n > nc + t_n,\,R_{r(n)} > T,\,\tau_{r(n)} \leq m(n),\,S_{\tau_{r(n)}} > \eps t_n) \\
          +\quad &\Prob(S_n > nc + t_n,\,R_{r(n)} > T,\,\tau_{r(n)} \leq m(n),\,S_{\tau_{r(n)}} \leq \eps t_n).
     \end{split}
\end{align}
From (a) we infer that the first summand in the right-hand side is in $o(n \Prob(S_1 > t_n))$.
Note that since $S_{\tau_{r(n)}} > \eps t_n$ implies $R_{r(n)} > T$ for all but finitely many $n$, the second summand on the right-hand side of \eqref{eq:overshoot-split} is bounded by
\begin{equation*}
     \Prob(\tau_{r(n)} \leq m(n),\,S_{\tau_{r(n)}} > \eps t_n).
\end{equation*}
Further, we find
\begin{align*}
\Prob(\tau_{r(n)} \leq m(n),\,S_{\tau_{r(n)}} > \eps t_n) &\leq \sum_{j=1}^{m(n)}\E[\1\{S_{j-1} \leq r(n)\}\Prob_{S_{j-1}}(S_1 > \eps t_n)] \\
     &\leq \Fbar(\eps t_n - r(n)) m(n).
\end{align*}
Since $\Fbar$ is regularly varying and we have $r(n) \in o(t_n)$ and $m(n) \in o(n)$ this is in $o(n \Fbar(t_n))$.

Finally, using the Markov property the third summand on the right-hand side of \eqref{eq:overshoot-split} is given by
\begin{align*}
     &\sum_{j=1}^{m(n)}\E[\1\{\tau_{r(n)} = j,\,r(n) + T < S_j < \eps t_n\} \Prob_{S_j}(S_{n-j} > nc + t_n)] \\
     = \quad &\sum_{j=1}^{m(n)}\E[\1\{\tau_{r(n)} = j,\,r(n) + T < S_j < \eps t_n\} \Fbar_{n-j}(t_n + jc - S_j)].
\end{align*}
If $S_j \leq \eps t_n$ then by \eqref{eq:frac-const-bound} and \eqref{eq:mufrac-bound} we have for all $n$ such that $n-m(n) \geq n_0$
\begin{equation*}
     \Fbar_{n-j}(t_n + jc - S_j) \leq \Fbar_{n-j}((1-\eps)t_n) \leq 2 (n-j)\Fbar((1-\eps)t_n) \leq 2Kn\Fbar(t_n).
\end{equation*}
Therefore,
\begin{align*}
     &\sum_{j=1}^{m(n)}\E[\1\{\tau_{r(n)} = j,\,S_j > r(n) + T,\,S_j\leq \eps t_n\} \Fbar_{n-j}(t_n + jc - S_j)] \\
     \leq \quad &2Kn\Fbar(t_n)\Prob(R_{r(n)} > T).
\end{align*}
Since $R_{r(n)}$ converges in distribution as $n \to \infty$ (see e.g. \cite[Lemma 12.22]{Kallenberg2021}), we get
\begin{equation*}
     \lim_{T \to \infty} \limsup_{n \to \infty} \frac{\Prob(S_n > nc + t_n,\,R_{r(n)} > T,\,\tau_{r(n)} \leq m(n),\,S_{\tau_{r(n)}} \leq \eps t_n)}{n \Fbar(t_n)} = 0.
\end{equation*}
This concludes the proof of (b).
\end{proof}

\begin{proof}[Proof of Lemma \ref{lemma:approx}.]
Let $(\G_n)_{n \in \N}$ be the natural filtration of $(S_n)_{n \in \N}$.
By conditioning on $\F_{\cC_{r(n)}}$ and then using \eqref{eq:EX} we obtain, for all $T > 0$,
\begin{align*}
     &\bbE\bigg[ \sum_{u \in \cC_{r(n)}} \e^{-V_u} X_u(n) \1\{V_u > r(n) + T\} \bigg] \\
     =\quad &\bbE\bigg[ \sum_{u \in \cC_{r(n)}} \e^{-V_u} \Fbar_{n - |u|}\big( t_n-(V_u - |u|c)\big) \1\{V_u > r(n) + T,\,|u| \leq n\} \bigg].
\end{align*}
Now we apply the many-to-one formula for stopping lines \eqref{eq:many214lines} and get
\begin{align*}
     &\bbE\bigg[ \sum_{u \in \cC_{r(n)}} \e^{-V_u} \Fbar_{n - |u|}\big( t_n-(V_u - |u|c)\big) \1\{V_u > r(n) + T,\,|u| \leq n\} \bigg] \\
     =\quad &\E\big[ \Fbar_{n - \tau_{r(n)}}\big( t_n-(S_{\tau_{r(n)}} - \tau_{r(n)}c)\big) \1\{S_{\tau_{r(n)}} > r(n) + T,\,\tau_{r(n)} \leq n\} \big].
\end{align*}
Using the strong Markov property, this equals
\begin{align*}
     &\E\big[ \Prob(S_n > nc + t_n \,|\,\G_{\tau_{r(n)}}) \1\{S_{\tau_{r(n)}} > r(n) + T,\,\tau_{r(n)} \leq n\} \big] \\
     =\quad&\Prob(S_n > nc + t_n,\,S_{\tau_{r(n)}} > r(n) + T,\,\tau_{r(n)} \leq n).
\end{align*}
where $\G_{\tau_{r(n)}}$ denotes the pre-$\tau_{r(n)}$ $\sigma$-algebra.
With $R_t = S_{\tau(t)} - t$ it follows
\begin{align*}
     \bbE\bigg[ \sum_{u \in \cC_{r(n)}} \e^{-V_u} X_u(n) \1\{V_u > r(n) + T\} \bigg] &= \Prob(S_n > nc + t_n,\,R_{r(n)} > T,\,\tau_{r(n)} \leq n) \\
     &\leq \Prob(S_n > nc + t_n,\, R_{r(n)} > T).
\end{align*}
Thus with Lemma \ref{lemma:shaving} (b) and \eqref{eq:Z-branching} we infer (c.f.~Remark \ref{rem:error} (a))
\begin{equation*}
     \Zbar_n(nc+t_n) = \sum_{u \in \cC_{r(n)}} \e^{-V_u} X_u(n)\1\{V_u \leq r(n) + T\} + \error{n\Fbar(t_n)}
\end{equation*}
To restrict the sum further, we infer similarly
\begin{align*}
     \bbE\bigg[ \sum_{u \in \cC_{r(n)}} \e^{-V_u} X_u(n) \1\{m(n)<|u|\} \bigg] = \Prob(S_n > nc + t_n,\, m(n) < \tau_{r(n)} \leq n)
\end{align*}
which by Lemma \ref{lemma:shaving} (a) is in $o(n\Fbar(t_n))$. 
Therefore, we obtain \eqref{eq:Zhat-approx}.

Analogously we approximate Nerman's martingale $Y_t = \sum_{u \in \cC_t}\e^{-V_u}$.
Indeed, we have 
\begin{align*}
     \bbE\bigg[ \sum_{u \in \cC_{r(n)}}\e^{-V_u}\1\{V_u > r(n) + T\} \bigg] &= \Prob(R_{r(n)} > T),\\
     \bbE\bigg[ \sum_{u \in \cC_{r(n)}}\e^{-V_u}\1\{m(n) < |u|\} \bigg] &= \Prob(\tau_{r(n)} > m(n)) \leq \Prob(S_{m(n)} < r(n)).
\end{align*}
By \eqref{eq:S<r} and the fact that the overshoot $R_t$ converges in distribution as $t \to \infty$, both integrands on the left-hand side are in $\error{1}$.
Thus we conclude \eqref{eq:Nerman-MG-approx}.
\end{proof}
\subsection{Proof of Theorem \ref{thm:brw-large-deviations} -- Part 2: A weak law of large numbers}\label{subsec:proof-part2}
The goal of this section is to prove a weak law of large numbers for the right-hand side of \eqref{eq:Zhat-approx}.
\begin{Lemma}\label{lemma:Zhat-wlln}
     For all sufficiently large $T > 0$ we have
     \begin{equation}\label{eq:Zhat-LLN}
          \frac{1}{n \Fbar(t_n)} \sum_{\substack{u \in \cC_{r(n)}^T \\ |u| \leq m(n)}} \e^{-V_u} \big( X_u(n) - \bbE[X_u(n)\,|\,V_u] \big) \pconv{n}{\infty} 0.
     \end{equation}
\end{Lemma}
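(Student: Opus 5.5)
Conditionally on $\F_{\cC_{r(n)}}$, the summands $X_u(n)$, $u \in \cC_{r(n)}^T$, $|u| \leq m(n)$, are independent. Each has the law of $\Zbar_{n-|u|}(nc+t_n-V_u)$ in an independent copy of the branching random walk, and its conditional mean is $\bbE[X_u(n)\,|\,V_u]$. So \eqref{eq:Zhat-LLN} is a weak law of large numbers for a weighted sum of independent centred variables with weights $\e^{-V_u}$. The plan is to bound a conditional $\gamma$-th moment (or a truncated second moment) of the centred sum and show it is $o((n\Fbar(t_n))^\gamma)$ in probability. The natural tool is the von Bahr--Esseen inequality with exponent $q\in(1,\gamma]$:
\begin{equation*}
\bbE\Big[\Big|\sum_{u} \e^{-V_u}(X_u(n)-\bbE[X_u(n)|V_u])\Big|^q \,\Big|\, \F_{\cC_{r(n)}}\Big] \leq 2\sum_{u} \e^{-qV_u}\, \bbE\big[X_u(n)^q\,|\,V_u\big].
\end{equation*}

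Next we need a bound on $\bbE[\Zbar_k(s)^q]$ of order $(k\Fbar(s))^q$, uniformly for $k\in[n-m(n),n]$ and $s = t_n - (V_u-|u|c) \geq (1-\eps)t_n$. On the summation range we have $V_u\leq r(n)+T$ and $|u|\leq m(n)$, so $s$ is comparable to $t_n$ and \eqref{eq:mufrac-bound} applies.

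In case \eqref{item:case2}, I would take this bound from the stochastic domination result Proposition \ref{prop:stochdom}, which the paper invokes for the $L^q$ convergence. It should give $\sup_n \bbE[(\Zbar_n(nc+t)/(n\Fbar(t)))^q]<\infty$ for $q<2-\gamma^{-1}$, hence $\bbE[X_u(n)^q|V_u]\le C(n\Fbar(t_n))^q$. Dividing by $(n\Fbar(t_n))^q$ leaves the bound $C\sum_{u\in\cC_{r(n)}}\e^{-qV_u}$. Since $V_u>r(n)$, this is at most $C\e^{-(q-1)r(n)}Y_{r(n)}$, which tends to $0$ in probability by Theorem \ref{thm:Nermans-MG}. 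The conditional Markov inequality then gives \eqref{eq:Zhat-LLN}.

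Case \eqref{item:case1} is the main obstacle, since no moment bound of order $(n\Fbar)^q$ is available. There I would use the crude bound $X_u(n)\le [W_{n-|u|}]_u$. This gives $\bbE[X_u(n)^q|V_u]\le \bbE[W_{n-|u|}^{q}]\le C$ by the $L^\gamma$-boundedness from \eqref{eq:Lp-cond}. Alternatively, to exploit both the $L^1$ size $n\Fbar(t_n)$ and boundedness in $L^\gamma$, I would interpolate via Hölder: $\bbE[X^{1+\delta}]\le \bbE[X]^{\theta}\bbE[X^\gamma]^{1-\theta}$, giving $\bbE[X_u(n)^{1+\delta}]\lesssim (n\Fbar(t_n))^{1-(1+\delta)/q}$-type factors. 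The centred sum's $(1+\delta)$-th moment divided by $(n\Fbar(t_n))^{1+\delta}$ is then controlled by $\sum_{u\in\cC_{r(n)}}\e^{-(1+\delta)V_u}$ times a power of $(n\Fbar(t_n))^{-1}$. The weight sum should be bounded by $\e^{-\delta r(n)}Y_{r(n)}$. Its expectation across all generations $j\le n$, after summing the first-passage contributions, is where \eqref{eq:r-growth} enters. Condition \eqref{eq:r2t} is exactly what makes the product $\e^{-r(n)(1-(1+\delta)/q)}/(n\Fbar(t_n))$ bounded, so the remaining small factor gives convergence to $0$. I expect matching the exponents here correctly, i.e.\ choosing the Hölder split so that \eqref{eq:r-growth} and \eqref{eq:r2t} apply verbatim, to be the delicate step.
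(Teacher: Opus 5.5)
Your route differs from the paper's, and it works. The paper feeds the conditionally independent summands into ready-made weak laws:
\begin{itemize}
\item In case \eqref{item:case2} it uses Nerman's law \cite[Proposition 3.8]{Nerman1981} for the average over $\cC^T_{r(n)}$. This needs $|\cC^T_{r(n)}|\to\infty$, hence $T$ large and Nerman's Lemma 3.6.
\item In case \eqref{item:case1} it uses the Marcinkiewicz--Zygmund law, Theorem \ref{thm:wlln-w/growth}, normalised by $|\cC^T_{r(n)}|^{(1+\delta')/q'}$. That theorem is proved by concatenating the rows of the array into one sequence, so it needs the growth control (iii). This is supplied by Lemma \ref{lemma:coming-gen-rate} (embedded ladder-height walk, Biggins' growth rate), the only place where \eqref{eq:r-growth} is used.
\end{itemize}

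You instead use a conditional von Bahr--Esseen bound together with the pathwise estimate $\sum_{u\in\cC_{r(n)}}\e^{-qV_u}\le \e^{-(q-1)r(n)}Y_{r(n)}$. That is, you use that every weight is at most $\e^{-r(n)}$ while the total weight stays bounded. After centring, the constant in your inequality is $2^{q+1}$ rather than $2$; this is harmless.

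In case \eqref{item:case2} this is a complete proof. Proposition \ref{prop:stochdom} must be used in its uniform form over $k\in[n-m(n),n]$ and $t\ge(1-\eps)a\sigma\sqrt{k\log k}$. This follows from the sequential statement by a subsequence argument, which the paper also uses implicitly. Your argument is shorter, quantitative, and valid for every $T>0$. The only thing the paper's route buys here is that it needs the normalised summands to be dominated by one integrable variable, rather than bounded in some $L^q$ with $q>1$.

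In case \eqref{item:case1} you stopped short, and the mechanism you guess for the last step is wrong. Your bound contains no sum over generations $j\le n$, and \eqref{eq:r-growth} is not needed at all. Your crude bound $X_u(n)\le[W_{n-|u|}]_u$ already finishes the argument. Take $q\in(1+\delta,\gamma)$ from \eqref{item:case1}. Then $\bbE[X_u(n)^q\,|\,\F_{\cC_{r(n)}}]\le \sup_k\bbE[W_k^\gamma]^{q/\gamma}=:C<\infty$. So the conditional $q$-th moment of the left-hand side of \eqref{eq:Zhat-LLN} is at most
\begin{equation*}
2^{q+1}C\,Y_{r(n)}\,\frac{\e^{-(q-1)r(n)}}{(n\Fbar(t_n))^{q}} = 2^{q+1}C\,Y_{r(n)}\Big(\frac{\e^{-r(n)(1-(1+\delta)/q)}}{n\Fbar(t_n)}\Big)^{q}\e^{-\delta r(n)}.
\end{equation*}
This tends to $0$ almost surely by \eqref{eq:r2t}, Theorem \ref{thm:Nermans-MG} and $r(n)\to\infty$. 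The last fact is forced by \eqref{eq:r2t}, because $n\Fbar(t_n)\to 0$ when $p>2$. The conditional Markov inequality and bounded convergence then conclude.

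The H\"older interpolation is superfluous: it gives the same threshold $\e^{-r(n)(1-1/\gamma)}\in o(n\Fbar(t_n))$ as the crude bound with exponent $\gamma$. So in case \eqref{item:case1} your method is strictly stronger than the paper's. For this lemma it makes \eqref{eq:r-growth}, Theorem \ref{thm:wlln-w/growth} and Lemma \ref{lemma:coming-gen-rate} unnecessary.
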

We prove the cases \eqref{item:case1} and \eqref{item:case2} of Theorem \ref{thm:brw-large-deviations} separately, starting with case \eqref{item:case2}.
\subsubsection{Proof of Lemma \ref{lemma:Zhat-wlln} in case \eqref{item:case2}}
To ensure stochastic domination for a weak law of large numbers in this case we use the following proposition.
Its proof is more involved and therefore postponed to Section \ref{sec:stochdom}.
\begin{Prop}\label{prop:stochdom}
     Suppose that \eqref{eq:heavy-tailed} through \eqref{eq:Zhat-tail-gamma} hold with $\gamma \in (1,2)$.
     Define $\eta \coloneqq 1 - \gamma^{-1}$ and fix $a > \sqrt{p-2}$.
     Then for all $(t_n)_{n \in \N}$ with $t_n \geq a \sigma \sqrt{n \log n}$ for all $n \in \N$ we have
     \begin{equation*}
          \bbE[\Zbar_n(nc + t_n)^{1+\eta}] \in \cO\big( (n \Fbar(t_n))^{1 + \eta} \big)\quad \text{ as }n \to \infty.
     \end{equation*}
\end{Prop}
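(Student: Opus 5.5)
The plan is to control the $(1+\eta)$-th moment with a martingale decomposition along generations, and to apply the von Bahr--Esseen inequality twice; since $1+\eta = 2-\gamma^{-1} \in (1,2)$, that inequality is available. Write $g_j(s) \coloneqq \Prob(S_j > s)$ and set
\begin{equation*}
     M_k \coloneqq \bbE\big[\Zbar_n(nc+t_n)\,\big|\,\F_k\big] = \sum_{|u|=k} \e^{-V_u} g_{n-k}(nc+t_n-V_u),\quad 0 \leq k \leq n,
\end{equation*}
where $\F_k$ is the $\sigma$-algebra generated by the first $k$ generations. The identity for $M_k$ follows from the many-to-one formula (Lemma \ref{lemma:many21}), applied in each subtree. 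Then $M_n = \Zbar_n(nc+t_n)$. Moreover $M_0 = \Prob(S_n > nc+t_n) \in \cO(n\Fbar(t_n))$ by Theorem \ref{thm:Nagaev}. Since $M_n \geq 0$, von Bahr--Esseen (or Burkholder) gives
\begin{equation*}
     \bbE[M_n^{1+\eta}] \leq C\Big( M_0^{1+\eta} + \sum_{k=0}^{n-1} \bbE|M_{k+1}-M_k|^{1+\eta}\Big).
\end{equation*}
It therefore suffices to show $\sum_k \bbE|M_{k+1}-M_k|^{1+\eta} \in \cO((n\Fbar(t_n))^{1+\eta})$.

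Conditionally on $\F_k$, the increment is $M_{k+1}-M_k = \sum_{|u|=k} \e^{-V_u} D_u$. Here
\begin{equation*}
     D_u = \big[\textstyle\sum_{|v|=1} \e^{-V_v} g_{n-k-1}(s - V_v)\big]_u - g_{n-k}(s),\qquad s = nc+t_n-V_u .
\end{equation*}
These $D_u$ are independent and centred given $\F_k$. A second application of von Bahr--Esseen, followed by the many-to-one formula, gives
\begin{equation*}
     \bbE|M_{k+1}-M_k|^{1+\eta} \leq 2\,\bbE\Big[\sum_{|u|=k}\e^{-(1+\eta)V_u} h_{n-k}(nc+t_n-V_u)\Big] = 2\,\E\big[\e^{-\eta S_k} h_{n-k}(nc+t_n-S_k)\big].
\end{equation*}
In this bound $h_j(s) \coloneqq \bbE\big[(\sum_{|v|=1}\e^{-V_v} g_{j-1}(s-V_v))^{1+\eta}\big]$, where the centring term has been absorbed by convexity.

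To bound $h_j(s)$, I would split the first-generation sum according to whether $V_v > s/2$ or $V_v \leq s/2$. On $\{V_v > s/2\}$ I use $g \leq 1$, so this part contributes at most $\bbE[\Zbar_1(s/2)^{1+\eta}]$. Since $1+\eta < \gamma$, Jensen's inequality and \eqref{eq:Zhat-tail-gamma} bound this by $\cO(\Fbar(s)^{1+\eta})$, using regular variation to pass from $\Fbar(s/2)$ to $\Fbar(s)$. On $\{V_v \leq s/2\}$ I use $g_{j-1}(s-V_v) \leq g_{j-1}(s/2)$, which gives the contribution $\bbE[W_1^{1+\eta}]\, g_{j-1}(s/2)^{1+\eta}$; here $\bbE[W_1^{1+\eta}]<\infty$ by \eqref{eq:Lp-cond}. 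Altogether,
\begin{equation*}
     h_j(s) \lesssim \Fbar(s)^{1+\eta} + g_{j-1}(s/2)^{1+\eta}.
\end{equation*}

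Plugging this in, I split $\E[\e^{-\eta S_k}h_{n-k}(nc+t_n-S_k)]$ according to whether $S_k - kc \leq \eps t_n$ or not.
\begin{itemize}
     \item On $\{S_k - kc \leq \eps t_n\}$, the argument $s = nc+t_n-S_k$ exceeds $(n-k)c+(1-\eps)t_n$. A Nagaev-type bound then gives $g_{n-k-1}(s/2) \lesssim n\Fbar(t_n)$, and I use \eqref{eq:mufrac-bound} for the ratio of tails. The remaining factor is $\E[\e^{-\eta S_k}]$. This equals $m(1+\eta)^k$, and $m(1+\eta)<1$ because $m$ is convex with $m(1)=1$ and $m(\gamma)<1$. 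Hence the sum over $k$ is $\cO((n\Fbar(t_n))^{1+\eta})$.
     \item On the complement, $h \leq C$, and $\E[\e^{-\eta S_k}\1\{S_k > kc+\eps t_n\}] \leq \e^{-\eta\eps t_n}m(1+\eta)^{k}$ up to a change of measure. This decays exponentially in $t_n$ and so is negligible against the polynomial quantity $(n\Fbar(t_n))^{1+\eta}$.
\end{itemize}

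The step I expect to be the main obstacle is the uniform bound $g_{j}(x) \lesssim n\Fbar(t_n)$ over all $j \leq n$ and all relevant $x \geq jc + (1-\eps)t_n/2$. The halving $s/2$ may push the deviation below the Nagaev threshold $a\sigma\sqrt{j\log j}$ once the factor $1/2$ is taken into account. To handle this I would first try to replace the crude split at $s/2$ by a split at $s - \eps t_n$, i.e.\ at $V_v > \eps t_n$. The large-$V_v$ part is still controlled by \eqref{eq:Zhat-tail-gamma}, now via $\Fbar(\eps t_n) \lesssim \Fbar(t_n)$ by regular variation. The small-$V_v$ part then keeps the deviation above $(1-2\eps)t_n$, so Theorem \ref{thm:Nagaev} applies with $a(1-2\eps)>\sqrt{p-2}$ after shrinking $\eps$. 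For small $j$, where Nagaev is not yet effective, I would fall back on the fixed-$j$ subexponential bound \eqref{eq:subexponential}, applied uniformly via Kesten's bound $g_j(x) \leq C_\delta(1+\delta)^j\Fbar(x)$; the resulting factor $(1+\delta)^k$ is absorbed by $m(1+\eta)^k$ when $\delta$ is small.
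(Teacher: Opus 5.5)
Your argument works once the split at $s/2$ is replaced by the split at displacement $V_v>\eps t_n$ from your last paragraph, and it takes a genuinely different route from the paper.

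\emph{Comparison.} The paper changes measure to the spinal law $\cQ$ and rewrites $\bbE[\Zbar_n(t)^{1+\eta}]=\bbE^\cQ[\Zbar_n(t)^{\eta}\1\{V(w_n)>t\}]$. It then applies conditional Jensen given the spine, so that only first moments of off-spine subtrees (computed by many-to-one) remain. After that it does a case analysis on the spine positions $V(w_{j-1}),V(w_j)$ via the events $A_{j,n},B_{j,n}$; the geometric decay comes from $\bbE^\cQ[\e^{-\theta V(w_j)}]=m(1+\theta)^j$. You instead use the Doob martingale $M_k=\bbE[\Zbar_n(nc+t_n)\mid\F_k]$ and apply von Bahr--Esseen twice, which is essentially the standard $L^p$ argument for Biggins' martingale. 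Your geometric factor comes from the weight $\e^{-(1+\eta)V_u}$ under many-to-one.

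This avoids the spinal machinery and most of the bookkeeping. It also never needs the H\"older step in Step 2.2 of the paper (conjugate exponents $\gamma$ and $1/\eta$), which is what fixes the paper's exponent at $1+\eta=2-\gamma^{-1}$. Your argument runs verbatim with any $r\in(1,\gamma]$ in place of $1+\eta$, using $m(r)<1$, $\bbE[W_1^r]<\infty$, and \eqref{eq:Zhat-tail-gamma} via Jensen. It therefore yields $\bbE[\Zbar_n(nc+t_n)^\gamma]\in\cO((n\Fbar(t_n))^\gamma)$, and hence $L^q$-convergence for all $q<\gamma$ in case~(II). The paper's route, in turn, makes the one-big-jump mechanism explicit: the big jump sits either on the spine or on a sibling of the spine.

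\emph{Corrections.}
\begin{enumerate}[(1)]
\item The $s/2$ split does not merely risk dropping below the Nagaev threshold; it fails outright. Halving $s\approx(n-k)c+t_n$ also halves the drift, so $s/2$ lies below the mean $(n-k-1)c$ of $S_{n-k-1}$ as soon as $(n-k)c>t_n+2c$. For instance, with $t_n\asymp\sqrt{n\log n}$ and $k\le n/2$ one gets $g_{n-k-1}(s/2)\to 1$, and the resulting bound is of order one. So the split at $\eps t_n$ is required, not an optional refinement.
\item Kesten's bound is unnecessary, and your justification for it is misindexed. The factor would be $(1+\delta)^{n-k-1}$, not $(1+\delta)^k$, and it is not absorbed by $m(1+\eta)^k$ for small $k$. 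What you need is $\Prob(S_j-jc>x)\le 2j\Fbar(x)$ for $n\ge n_0$, all $j\le n$, and all $x\ge(1-2\eps)a\sigma\sqrt{n\log n}$. This follows from Theorem \ref{thm:Nagaev} for $j\ge n_1$ (since $\sqrt{j\log j}\le\sqrt{n\log n}$) and from \eqref{eq:subexponential} for the finitely many $j<n_1$, exactly as in \eqref{eq:Fbar_bound}.
\item On $\{S_k>kc+\eps t_n\}$, simply use $\e^{-\eta S_k}\le \e^{-\eta(kc+\eps t_n)}$. This is summable in $k$ and $o(\Fbar(t_n)^{1+\eta})$.
\end{enumerate}
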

\begin{proof}[Proof of Lemma \ref{lemma:Zhat-wlln} in the case \eqref{item:case2}, admitting Proposition \ref{prop:stochdom}.]
From \cite[Theorem 3]{Biggins1998} we know that assumption \eqref{eq:heavy-tailed} implies
\begin{equation}\label{eq:min-infty}
     \lim_{n \to \infty} \min_{|u| = n} V_u = \infty
\end{equation}
almost surely on survival, whence $N(t) \coloneqq \sum_{u \in \I} \1\{V_u \leq t\}$ is finite almost surely for all $t \geq 0$ (here we use that the point process $\xi$ is almost surely locally finite).
Since $|\cC_{t}^T| \leq N(t + T)$, it follows that $\cC_{t}^T$ is almost surely finite for all $T > 0$.
Let $T > 0$ satisfy $\mu(-\infty, T] > 1$ (this is true for all sufficiently large $T$ by \eqref{eq:heavy-tailed}).
From the proof of \cite[Lemma 3.6]{Nerman1981} we get that for some $c > 0$ we have almost surely on survival
\begin{equation*}
     \liminf_{t \to \infty}\frac{|\cC_{t}^T|}{N(t)} \geq c.
\end{equation*}
As $N(t) \uparrow \infty$ it follows $|\cC_{t}^T| \to \infty$ as $t \to \infty$ almost surely on survival.

We shall use \cite[Proposition 3.8]{Nerman1981} to infer
\begin{equation}\label{eq:lln-case2}
     \frac{1}{|\cC_{r(n)}^T|} \sum_{\substack{u \in \cC_{r(n)}^T \\ |u| \leq m(n)}} \frac{\e^{r(n)-V_u}}{n \Fbar(t_n)} \big( X_u(n) - \bbE[X_u(n)\,|\,V_u] \big) \pconv{n}{\infty} 0.
\end{equation}
Note that the summands are independent given $\F_{\cC_{r(n)}}$ (see \cite[Theorem 4.14]{Jagers1989}), and $\e^{r(n)-V_u} \leq 1$ for all $u \in \cC_{r(n)}^T$.
To conclude \eqref{eq:lln-case2} it suffices to show that
\begin{equation*}
     \frac{X_u(n)}{n\Fbar(t_n)}, \quad u \in \cC_{r(n)}^T
\end{equation*}
is uniformly stochastically dominated conditional on $\F_{\cC_{r(n)}}$ for all $n \in \N$ by some integrable random variable.
This is turn follows if we show that there exist $C \in (0,\infty)$ and $\eta > 0$ such that for all $n \in \N$ we have
\begin{equation}\label{eq:X-uniform-moment}
     \sup_{u \in \cC_{r(n)}^T} \bbE\Big[ \Big(\frac{X_u(n)}{n\Fbar(t_n)} \Big)^{1+\eta}\,|\,\F_{\cC_{r(n)}}\Big] \leq C.
\end{equation}
Indeed, if a family of non-negative random variables $(X_i)_{i \in I}$ has $\sup_{i \in I} \bbE[X_i^{1+\eta}] \leq C$, then by Markov's inequality we have for all $i \in I$
\begin{equation*}
     \bbP(X_i > t) \leq \frac{C}{t^{1+\eta}} \wedge 1,\quad t > 0.
\end{equation*}
Therefore, if the right-hand side is taken to define the tail probability of a random variable $Y$, then $(X_i)_{i \in I}$ is uniformly stochastically dominated by $Y$ and $Y$ is integrable.
Thus we see that \eqref{eq:X-uniform-moment} is sufficient to show that $X_u(n)/n \Fbar(t_n)$, $u \in \cC_{r(n)}^T$ is stochastically dominated.

Recall that $\eps > 0$ is chosen such that $(1-\eps)a > \sqrt{p-2}$. 
By Proposition \ref{prop:stochdom} there exist $\eta > 0$ and $C \in (0,\infty)$ such that
\begin{equation*}
     \sup_{n \in \N} \bbE\Big[ \Big( \frac{\Zbar_n(nc+(1-\eps)t_n)}{n \Fbar((1-\eps)t_n)} \Big)^{1 + \eta}\Big] \leq C.
\end{equation*}
Moreover, by $r(n) \in o(t_n)$ there exists $n_0 \in \N$ such that for all $n \geq n_0$ we have
\begin{equation*}
     t_n - (r(n) + T) \geq (1-\eps) t_n. 
\end{equation*}
Then for all $n \geq n_0$ and $u \in \cC_{r(n)}^T$ with $|u| \leq n$ we find, due to $V_u \leq r(n) + T$ and $t \mapsto \Zbar_j(t)$ being decreasing for all $j \in \N$,
\begin{align*}
     X_u(n) = [\Zbar_{u - |u|}(nc + t_n - V_u)]_u &\leq [\Zbar_{u - |u|}((n-|u|)c + t_n - (r(n) + T))]_u \\
     &\leq [\Zbar_{u - |u|}((n-|u|)c + (1-\eps)t_n)]_u.
\end{align*}
Now, since $[\Zbar_{j}(t)]_u$ conditional on $\F_{\cC_{r(n)}}$ has the same law as $\Zbar_{j}(t)$ for all $j \in \N$, $t \in \R$ and $u \in \cC_{r(n)}$, we get for all $n \geq n_0$ and $u \in \cC_{r(n)}^T$ with $|u| < n$ (if $|u| = n$ then $X_u(n) = 0$)
\begin{align*}
     &\bbE\Big[ \Big(\frac{X_u(n)}{n\Fbar(t_n)} \Big)^{1+\eta}\,|\,\F_{\cC_{r(n)}}\Big] \\
     \leq \quad &\Big( \frac{\Fbar((1-\eps)t_n)}{\Fbar(t_n)} \Big)^{1+\eta} \bbE\Big[ \Big( \frac{\Zbar_{n-|u|}((n-|u|)c+(1-\eps)t_n)}{(n-|u|) \Fbar((1-\eps)t_n)} \Big)^{1 + \eta}\Big] \\
     \leq \quad &K^{1+\eta} C
\end{align*}
where in the last step we used \eqref{eq:mufrac-bound}.
This shows \eqref{eq:X-uniform-moment}.
Thus we conclude \eqref{eq:lln-case2} using \cite[Proposition 3.8]{Nerman1981} (note that restricting the sum further to $u \in \cC_{r(n)}^T$ with $|u| \leq m(n)$ has no effect).
Borrowing another trick from \cite{Nerman1981} we see that, for all $n \in \N$,
\begin{equation}\label{eq:coming-gen-upper-bound}
     \e^{-(r(n)+T)} |\cC_{r(n)}^T| \leq \sum_{u \in \cC_{r(n)}^T} \e^{-V_u} \leq Y_{r(n)}.
\end{equation}
Since $Y_{r(n)}$ converges almost surely by Theorem \ref{thm:Nermans-MG}, the claim follows.
\end{proof}
\subsubsection{Proof of Lemma \ref{lemma:Zhat-wlln} in case \eqref{item:case1}}
In the case \eqref{item:case1} we stochastically bound $X_u(n)$ instead of $X_u(n)/n \Fbar(t_n)$ and use a suitable Marcinkiewicz-Zygmund-type weak law of large numbers to get convergence in probability with a rate.
Its proof is a combination of the proof of \cite[Proposition 4.1]{Nerman1981} with a Marcinkiewicz-Zygmund weak law of large numbers.
We present the details in Appendix \ref{sec:wlln-proof}.
\begin{Thm}\label{thm:wlln-w/growth}
     Let $(X_{kj})_{j \leq n_k,\,k \in \N}$ be a family of non-negative random variables such that $X_{k1},\ldots,X_{kn_k}$ is independent for all $k \in \N$.
     Further assume that there exists a non-negative random variable $X$ such that 
     \begin{enumerate}[(i)]
          \item $X_{kj} \preceq X$ for all $k,j$,
          \item $\bbP(X > t) \in o(t^{-p})$ as $t \to \infty$ for some $p \in (1,2)$,
          \item $\limsup_{k \to \infty} n_k^{-q}\sum_{j \leq k} n_j < \infty$ for some $q > 0$.
     \end{enumerate}
     Then we have
     \begin{equation*}
          \frac{1}{n_k^{q/p}} \sum_{j=1}^{n_k} \big( X_{kj} - \bbE[X_{kj}] \big) \pconv{k}{\infty} 0.
     \end{equation*}
\end{Thm}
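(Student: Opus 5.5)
The plan is a truncation argument in the style of the Marcinkiewicz--Zygmund weak law, organised around a double-sum over $k$ coming from assumption (iii). Write $b_k \coloneqq n_k^{q/p}$ and truncate at level $b_k$: set $Y_{kj} \coloneqq X_{kj}\1\{X_{kj} \leq b_k\}$. We split
\begin{equation*}
     \frac{1}{b_k}\sum_{j=1}^{n_k}(X_{kj}-\bbE[X_{kj}]) = \frac{1}{b_k}\sum_{j=1}^{n_k}(Y_{kj}-\bbE[Y_{kj}]) + \frac{1}{b_k}\sum_{j=1}^{n_k}(X_{kj}-Y_{kj}) - \frac{1}{b_k}\sum_{j=1}^{n_k}\bbE[X_{kj}-Y_{kj}]
\end{equation*}
and show that each of the three terms tends to $0$ in probability (the last one deterministically). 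Throughout, the domination (i) is used in the form $\bbP(X_{kj} > t) \leq \bbP(X > t)$ for $t$ large. Only finitely many small $t$ are affected, and bounded contributions are harmless after dividing by $b_k \to \infty$; for this we need $n_k \to \infty$, which is the case of interest.

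For the second term, $\bbP\big(\exists j\leq n_k : X_{kj}\neq Y_{kj}\big) \leq n_k \bbP(X > b_k) = n_k\, o(b_k^{-p}) = o(n_k^{1-q})$. This suffices whenever $q \geq 1$. When $q<1$ it does not suffice directly, and this is where (iii) has to enter. I would replace the bound by a cumulative one. Condition (iii) forces the $n_k$ to grow at least geometrically-like in the sense that $\sum_{j\le k} n_j \lesssim n_k^q$. Since $q<1$ would contradict $\sum_{j\le k}n_j\ge n_k\to\infty$, condition (iii) in fact forces $q\ge 1$. I would note this first, after which the union bound settles the second term.

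For the third term, $\bbE[X_{kj}\1\{X_{kj}>b_k\}] \leq b_k\bbP(X>b_k) + \int_{b_k}^\infty \bbP(X>t)\,\d t = o(b_k^{1-p})$, using $p>1$. Multiplying by $n_k/b_k$ gives $o(n_k b_k^{-p}) = o(n_k^{1-q})$, which is bounded (and $o(1)$ when $q>1$).

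The first term is the main obstacle. By independence and Chebyshev it suffices that $b_k^{-2}\sum_j \bbE[Y_{kj}^2]\to 0$. We have $\bbE[Y_{kj}^2] \leq 2\int_0^{b_k} t\,\bbP(X>t)\,\d t = o(b_k^{2-p})$ because $p<2$, so the sum is $o(n_k b_k^{-p}) = o(n_k^{1-q})$. In the boundary case $q=1$ this is $o(1)$ and everything closes. For $q>1$ the estimates are even better. If this is not sharp enough to match the intended use (for instance if one wants the normalisation $n_k^{q/p}$ with $q$ close to $1$ while the second term is only $O(1)$), I would fall back on the genuinely Nerman-style argument. That argument truncates at a fixed level $b_j$ depending on the index $j\le k$ and uses (iii) to sum $\sum_{j\le k} n_j\bbP(X>b_j)$ via a dyadic comparison, as in the proof of Nerman's Proposition~4.1. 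The Chebyshev computation with the $o(\cdot)$ tail and $p\in(1,2)$ is the step I expect to require the most care in getting the exponents consistent.
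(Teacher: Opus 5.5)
Your proof is correct, but it takes a different route from the paper's.

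\textbf{The paper's route.} The paper assumes without loss of generality that all rows are centred and mutually independent, and concatenates them into a single independent sequence. It applies the single-sequence Marcinkiewicz--Zygmund law (Theorem~\ref{thm:wlln}) along the indices $N_k=\sum_{i\le k}n_i$. It then recovers row $k$ as the difference of the normalised cumulative sums at $k$ and at $k-1$. Condition (iii) is used only to keep the factors $(N_k/n_k^q)^{1/p}$ and $(N_{k-1}/n_k^q)^{1/p}$ bounded.

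\textbf{Your route.} You truncate row $k$ directly at $b_k=n_k^{q/p}$ and control the three pieces by a union bound, a tail-integral estimate and Chebyshev. This needs only row-wise independence and a domination threshold $t_0$ that is uniform in $(k,j)$; the paper implicitly needs the same uniformity. Your observation that (iii) forces $q\ge 1$ is correct and is the crux. It shows that (iii) enters only through $q\ge1$ and $n_k\to\infty$. In fact your computation yields the stronger normalisation $n_k^{1/p}$ for any row-wise independent, uniformly dominated triangular array with $n_k\to\infty$. The paper's argument is shorter given Theorem~\ref{thm:wlln}; yours is self-contained and more general.

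\textbf{Points to tidy up.}
\begin{enumerate}
\item Do not treat $n_k\to\infty$ as merely ``the case of interest''. It follows from (iii) itself: $\sum_{j\le k}n_j\ge k$ gives $k\le C n_k^q$ eventually. This matters, because the statement is false for bounded $n_k$.
\item The small-$t$ part of the truncated second moments contributes $t_0^2\, n_k^{1-2q/p}$ after dividing by $b_k^2$. This tends to $0$ because $2q/p>1$; dividing by $b_k\to\infty$ alone would not suffice.
\item Your closing hedge and the Nerman-style fallback are unnecessary. For every $q\ge1$ all three terms are $o(n_k^{1-q})=o(1)$. In particular, at $q=1$ the third term is $o(1)$, not merely bounded.
\end{enumerate}
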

In our case we have $n_k = |\cC^T_{r(k)}|$ with $\cC^T_{r(k)}$ from \eqref{eq:coming-gen-cutoff}.
To get condition (iii) of the above theorem is the purpose of the following lemma.
\begin{Lemma}\label{lemma:coming-gen-rate}
     Let $(r(n))_{n \in \N}$ satisfy \eqref{eq:r-growth} with $\delta \in (0,\gamma-1)$.
     Then for all $\delta' > \delta$ and all sufficiently large $T > 0$ we have, almost surely on survival,
     \begin{equation*}
          \limsup_{n \to \infty} |\cC_{r(n)}^T|^{-(1 + \delta')} \sum_{j=1}^{n-1} |\cC_{r(n)}^T| < \infty.
     \end{equation*}
\end{Lemma}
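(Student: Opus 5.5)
The plan is to read the sum in the statement as $\sum_{j=1}^{n-1}|\cC_{r(j)}^T|$; as printed it has an obvious typo. The idea is to sandwich $|\cC_t^T|$ between two exponentials, $\e^{(1-\kappa)t}\lesssim|\cC_t^T|\lesssim \e^{t}$, almost surely on survival. Then \eqref{eq:r-growth} transfers to the random sequence $|\cC_{r(n)}^T|$. We may assume $r(n)\to\infty$, since otherwise \eqref{eq:r-growth} cannot hold with $\delta>0$. Fix $T$ so large that $\mu(-\infty,T]>1$, as in the proof of Lemma~\ref{lemma:Zhat-wlln} in case~\eqref{item:case2}.

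\emph{Upper bound.} By \eqref{eq:coming-gen-upper-bound} we have $|\cC_t^T|\le \e^{t+T}Y_t$. By Theorem~\ref{thm:Nermans-MG}, $Y_t$ converges almost surely, so $M:=\sup_{t\ge0}Y_t<\infty$ almost surely. Hence
\begin{equation*}
\sum_{j=1}^{n-1}|\cC_{r(j)}^T|\le M\e^{T}\sum_{j=1}^{n-1}\e^{r(j)}\le C_\omega\, \e^{(1+\delta)r(n)}
\end{equation*}
for all large $n$, by \eqref{eq:r-growth}.

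\emph{Lower bound.} I would show that for every $\kappa>0$ we have, almost surely on survival, $|\cC_t^T|\ge \e^{(1-\kappa)t}$ for all large $t$. By the proof of \cite[Lemma 3.6]{Nerman1981}, already quoted above, $\liminf_t|\cC_t^T|/N(t)\ge c>0$ on survival, so it suffices to bound $N(t)=\#\{u:V_u\le t\}$ from below. Write $Y_t=A_t+B_t$, where $A_t$ collects the $u\in\cC_t$ with $V_u\le (1+\kappa)t$ and $B_t$ the rest.
\begin{itemize}
\item By \eqref{eq:many214lines}, $\bbE[B_t]=\Prob(R_t>\kappa t)$. The overshoot tail of a positive-mean walk is controlled by the integrated tail $\int_{\kappa t}^\infty\Fbar$, which is regularly varying of index $-(p-1)<-1$ because $p>2$. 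Hence $\sum_{k\in\N}\Prob(R_k>\kappa k)<\infty$.
\item For fixed $\eta>0$, Markov's inequality and Borel--Cantelli give $B_k\le\eta/2$ for all large integers $k$, almost surely.
\item On $\{W>\eta\}$, since $Y_k\to W$, this yields $A_k\ge\eta/2$ eventually.
\item Each summand of $A_k$ is at most $\e^{-k}$, so $N((1+\kappa)k)\ge \#\{\text{summands of }A_k\}\ge \tfrac{\eta}{2}\e^{k}$.
\end{itemize}
Letting $\eta\downarrow0$ along a countable sequence and using $\{W>0\}=$ survival (by \eqref{eq:positive-mean} and \eqref{eq:Biggins-cond}), we get $N(s)\ge \e^{s/(1+\kappa)-o(s)}$ along the grid $s=(1+\kappa)k$. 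Monotonicity of $N$ extends this to all $s$, and adjusting $\kappa$ gives the claim.

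\emph{Conclusion.} Given $\delta'>\delta$, choose $\kappa>0$ with $(1-\kappa)(1+\delta')\ge1+\delta$. Then $|\cC_{r(n)}^T|^{1+\delta'}\ge \e^{(1+\delta)r(n)}$ eventually. Combined with the upper bound, this makes the $\limsup$ at most $C_\omega<\infty$.

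The main obstacle is the almost-sure lower growth step. It relies on two things. First, Nerman's ratio bound $|\cC_t^T|/N(t)$ must be available in this branching random walk setting (locally finite $\xi$, with \eqref{eq:min-infty} ensuring $N(t)<\infty$). Second, the overshoot estimate must be summable, and this is exactly where $p>2$ is used. If that summability turned out to be too weak, I would instead take a sparser geometric grid $t_k=\theta^k$ with $\theta>1$ close to $1$, and interpolate using monotonicity of $N$.
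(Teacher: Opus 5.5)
Your proof is correct in outline. It differs from the paper's in how the exponential lower growth is obtained.

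\textbf{How the routes differ.} The paper passes to the embedded ladder process $\tilde V$. This is a CMJ process with Malthusian parameter $1$ and $|\tilde\cC_t^T|=|\cC_t^T|$. The paper applies Nerman's Lemma~3.6 to it to get $|\cC_t^T|\ge c\tilde N(t)$, and then cites Biggins (1995, Thm.~2) for $\liminf_t t^{-1}\log\tilde N(t)\ge 1$. The upper bound and the final bookkeeping are the same as yours.

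You instead prove the growth bound directly, using:
\begin{itemize}
\item many-to-one along $\cC_k$;
\item a uniform overshoot bound $\Prob(R_t>x)\le C\sum_{k\ge0}\Fbar(x+k)$, which follows from $\sup_y\sum_n\Prob(S_n\in[y,y+1])<\infty$ for a transient walk (worth stating explicitly);
\item Borel--Cantelli at integer times;
\item $Y_k\to W$ together with $\{W>0\}=$ survival.
\end{itemize}
This is self-contained and avoids Biggins' theorem and its lattice footnote. The price is that you use the tail assumption: $p>2$, or an $L\log L$-type moment for your geometric-grid fallback. The paper's route needs nothing about $\Fbar$.

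\textbf{One step needs adjusting.} You apply Nerman's ratio bound $\liminf_t|\cC_t^T|/N(t)\ge c$ to the original branching random walk. Nerman's lemma is a CMJ statement. With negative displacements, $N(t)$ also counts particles that dropped back below $t$ after some ancestor had crossed $t$. Such particles have no direct relation to $\cC_t^T$, so the citation does not cover this case. The paper uses this form only in case~(II), where it just needs $|\cC_t^T|\to\infty$. In the proof of this lemma it deliberately works with the embedded CMJ instead.

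The fix costs nothing. Every particle you count in $A_k$ lies in $\cC_k$. It is therefore a strict ascending ladder point, all of its ancestors are at most $k$, and its own position is at most $(1+\kappa)k$. So your argument actually gives $\tilde N((1+\kappa)k)\ge\frac{\eta}{2}\e^{k}$ for the embedded process. Combine this with Nerman's Lemma~3.6 for that CMJ, taking $T$ large enough that $\tilde\mu[0,T]>1$; supercriticality of the embedded process guarantees such $T$. This yields $\liminf_t t^{-1}\log|\cC_t^T|\ge 1$ on survival, and the rest of your proof goes through unchanged.
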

\begin{proof}
From \eqref{eq:coming-gen-upper-bound} above we know that for all $T>0$ there exists a finite random variable $Y$ such that, almost surely,
\begin{equation*}
     |\cC_t^T| \leq Y \e^t \quad \text{ for all }t \geq 0.
\end{equation*}
We also need to establish a lower bound.
To that end consider the point process
\begin{equation*}
     \tilde{\xi} \coloneqq \sum_{u \in \cC_0} \delta_{V_u}
\end{equation*}
(with $\cC_0$ from \eqref{eq:coming-gen}) which is almost surely supported in $[0,\infty)$.
It is well-known (see \cite{Biggins1997a,Biggins2005,Alsmeyer2012}) that the branching random walk $(V_u)_{u \in \I}$ has an embedded instance of a branching random walk with reproduction point process $\tilde{\xi}$, i.e., there exists a random injective map $\iota:\I \to \I$ such that $(\tilde{V}_u)_{u \in \I} \coloneqq (V_{\iota(u)})_{u \in \I}$ is a branching random walk with reproduction point process $\tilde{\xi}$.
An individual $u \in \I$ appears in the embedded branching random walk if and only if $V_u > V_v$ for all $v < u$ (i.e., its position is maximal among the positions of all its ancestors or, in other words, its position is a strictly ascending ladder height in the sequence of positions in its lineage).
From \eqref{eq:min-infty} it follows that the embedded process survives if and only if the original one survives.
In particular the embedded branching process is supercritical.
Therefore, for all sufficiently large $T > 0$ we have 
\begin{equation*}
     \tilde{\mu}[0,T] \coloneqq \bbE\big[\tilde{\xi}[0,T]\big] > 1.
\end{equation*}
Fix one such $T > 0$.
Using \eqref{eq:many214lines} we infer (c.f.~\cite[Theorem 10]{Biggins2005})
\begin{equation*}
     \bbE\bigg[\int \e^{-x} \tilde{\xi}(\d x) \bigg] = \bbE\bigg[ \sum_{u \in \cC_0} \e^{-V_u}\bigg] = \Prob(\tau_0 < \infty) = 1,
\end{equation*}
so the embedded branching process has Malthusian parameter $1$.
Moreover, it is easy to see that 
\begin{equation*}
     \tilde{\cC}_t \coloneqq \{u \in \I\,|\, \max_{v < u}\tilde{V}_v \leq t < \tilde{V}_u\}.
\end{equation*}
satisfies $\iota(\tilde{\cC}_t) = \cC_t$ (cf.~\cite[Lemma 8.1]{Biggins1997a}).
We similarly define $\tilde{\cC}_t^T$ and get $|\cC_t^T| = |\tilde{\cC}_t^T|$.
From \cite[Lemma 3.6]{Nerman1981} we infer that there exists $c > 0$ such that
\begin{equation*}
     \liminf_{t \to \infty}\frac{|\tilde{\cC}_t^T|}{\tilde{N}(t)} \geq c
\end{equation*}
where $\tilde{N}(t) \coloneqq \sum_{u \in \I} \1\{\tilde{V}_u \leq t\}$ (here we use $\tilde{\mu}[0,T] > 1$).
Now we invoke \cite[Theorem 2]{Biggins1995} to get\footnote{Note that the cited reference contains a non-lattice assumption which it inherits from Nerman's work \cite{Nerman1981}. It is straight-forward to obtain the same result in the lattice case via the same proof, using \cite{Gatzouras2000} instead of \cite{Nerman1981}.}
\begin{equation*}
     \liminf_{t \to \infty} \frac{\log \tilde{N}(t)}{t} \geq 1 \quad \text{almost surely on survival.}
\end{equation*}
Conclusively, we have
\begin{equation*}
     \liminf_{t \to \infty} \frac{\log |\cC_t^T|}{t} \geq 1 \quad \text{almost surely on survival.}
\end{equation*}
Let $\delta' > \delta$ and take $\eta \in (0,1)$ such that $(1-\eta)(1+\delta') > 1+\delta$.
Almost surely on survival, for all sufficiently large $t$ we have $|\cC_t^T| \geq \e^{(1-\eta) t}$, so that for all sufficiently large $n$
\begin{equation*}
     |\cC_{r(n)}^T|^{-(1+\delta')} \leq \e^{-(1-\eta)(1+\delta')r(n)} \leq \e^{-(1+\delta)r(n)}.
\end{equation*}
Therefore, the claim follows from \eqref{eq:r-growth}.
\end{proof}
\begin{proof}[Proof of Lemma \ref{lemma:Zhat-wlln} in the case \eqref{item:case1}.]
Let $\delta \in (0,\gamma -1)$ and $q \in (1+\delta,\gamma)$ satisfy \eqref{eq:r2t}.
Fix $q' \in (q,\gamma)$ and let $\delta' > \delta$ satisfy $(1+\delta')/q' = (1+\delta)/q$.
We will use Theorem \ref{thm:wlln-w/growth} to show that
\begin{equation}\label{eq:wlln-w/growth-application}
     |\cC_{r(n)}^T|^{-\frac{1+\delta'}{q'}} \sum_{u \in \cC_{r(n)}^T} \e^{r(n)-V_u} \big( X_u(n) - \bbE[X_u(n)\,|\,\F_{\cC_{r(n)}}] \big) \pconv{n}{\infty} 0.
\end{equation}
To that end we have to stochastically dominate each summand $X_u(n)$, $u \in \cC_{r(n)}$ conditional on $\F_{\cC_{r(n)}}$ uniformly by an integrable random variable $Y$ that satisfies $\Prob(Y > t) \in o(t^{-q'})$.
Similar to the proof in case \eqref{item:case2} (see \eqref{eq:X-uniform-moment}) it suffices to show that, for some $C \in (0,\infty)$ and all $n \in \N$ we have
\begin{equation}\label{eq:X-uniform-moment-case1}
     \sup_{u \in \cC_{r(n)}^T}\bbE[X_u(n)^\gamma\,|\, \F_{\cC_{r(n)}}] \leq C.
\end{equation}
As the tail of the dominating random variable $Y$ we then take $\Prob(Y > t) = C t^{-\gamma} \wedge 1$, which is in $o(t^{-q'})$ as claimed.
For every $t \in \R$ and $n \in \N$ we have
\begin{equation*}
     \Zbar_n(t, \infty) \leq \Zbar_n(\R) = W_n,
\end{equation*}
where $(W_n)_{n \in \N_0}$ is Biggins' martingale, see \eqref{eq:Biggins-MG}.
By assumption \eqref{eq:Lp-cond} and \cite[Thm 2.1]{Liu2000} we have $W_n \to W$ in $\L^\gamma$,
which implies that
\begin{equation*}
     \sup_{n \in \N_0} \bbE[W_n^\gamma] \leq \bbE[W^\gamma] < \infty.
\end{equation*}
Since for each $u \in \cC_{r(n)}$ the law of $X_u(n)$ given $\F_{\cC_{r(n)}}$ is that of $\Zbar_{n - |u|}(nc+t_n - V_u)$, we arrive at \eqref{eq:X-uniform-moment-case1}.
Thus we have established (i) and (ii) of Theorem \ref{thm:wlln-w/growth}, while (iii) follows from Lemma \ref{lemma:coming-gen-rate}.
Therefore, the theorem yields \eqref{eq:wlln-w/growth-application}.
Finally, we find
\begin{align*}
     &\frac{1}{n \Fbar(t_n)} \sum_{u \in \cC_{r(n)}^T} \e^{-V_u}\big( X_u(n) - \bbE[X_u(n)\,|\,V_u] \big) \\
     =\quad &\frac{\e^{-r(n)(1 - (1+\delta)/q)}}{n \Fbar(t_n)} \big(\e^{-r(n)}|\cC_{r(n)}^T|\big)^\frac{1+\delta'}{q'} \\
     \times\  &|\cC_{r(n)}^T|^{-\frac{1+\delta'}{q'}} \sum_{u \in \cC_{r(n)}^T} \e^{r(n)-V_u} \big( X_u(n) - \bbE[X_u(n)\,|\,V_u] \big)
\end{align*}
Using \eqref{eq:r2t}, \eqref{eq:coming-gen-upper-bound}, and Theorem \ref{thm:Nermans-MG} we conclude \eqref{eq:Zhat-LLN}.
\end{proof}
\subsection{Proof of Theorem \ref{thm:brw-large-deviations} -- Part 3: Conclusion}\label{subsec:proof-part3}
Now we can finish the proof of our main result.
\begin{proof}[Proof of Theorem \ref{thm:brw-large-deviations}]
Fix $T > 0$ such that \eqref{eq:Zhat-LLN} holds.
Combining Lemma \ref{lemma:approx} with Lemma \ref{lemma:Zhat-wlln} and \eqref{eq:EX} yields
\begin{align*}
     \Zbar(t_n + nc) &= \sum_{\substack{u \in \cC_{r(n)}^T \\ |u| \leq m(n)}} \e^{-V_u} X_u(n) + \error{n\Fbar(t_n)} \\
     &= \sum_{\substack{u \in \cC_{r(n)}^T \\ |u| \leq m(n)}} \e^{-V_u} \bbE[X_u(n)\,|\,V_u] + \error{n\Fbar(t_n)} \\
     &= \sum_{\substack{u \in \cC_{r(n)}^T \\ |u| \leq m(n)}} \e^{-V_u} \Fbar_{n-|u|}\big(t_n - (V_u - |u|c)\big) + \error{n\Fbar(t_n)}.
\end{align*}
Therefore, by \eqref{eq:Nerman-MG-approx} and Remark \ref{rem:error} (b) it suffices show that
\begin{equation}\label{eq:last-step}
     \sum_{\substack{u \in \cC_{r(n)}^T \\ |u| \leq m(n)}} \e^{-V_u} \Fbar_{n-|u|}\big(t_n - (V_u - |u|c)\big) = n \Fbar(t_n) \sum_{\substack{u \in \cC_{r(n)}^T \\ |u| \leq m(n)}} \e^{-V_u} + \error{n\Fbar(t_n)}.
\end{equation}
Abbreviate the left-hand side by $\cS_{n,T}$ and define
\begin{equation*}
     Y_{n,T} \coloneqq \sum_{\substack{u \in \cC_{r(n)}^T \\ |u| \leq m(n)}} \e^{-V_u}.
\end{equation*}
Let $\delta \in (0,1)$ such that $(1-\delta)a > \sqrt{p-2}$.
By Theorem \ref{thm:Nagaev} there exists $n_0$ such that for all $n \geq n_0$
\begin{equation*}
     \sup_{x \geq a(1-\delta) \sigma \sqrt{n \log n}}\Big|\frac{\Fbar_n(x)}{n \Fbar(x)} - 1 \Big| < \delta.
\end{equation*}
Recall that for all $u \in \Lambda_n^T \cap \cC_{r(n)}$ we have $V_u \leq r(n) + T$.
Since $r(n) \in o(t_n)$, we have for all but finitely many $n$
\begin{equation}\label{eq:Fbar-arg-bound}
     t_n-(V_u - |u|c) \geq t_n - (r(n) + T) \geq (1-\delta)t_n.
\end{equation}
Therefore, for all large $n$ such that $n-m(n) \geq n_0$ we have
\begin{align*}
     \cS_{n,T} &\leq \sum_{\substack{u \in \cC_{r(n)}^T \\ |u| \leq m(n)}} \e^{-V_u} \Fbar_{n - |u|}\big( (1-\delta)t_n \big) \\
     &\leq (1+\delta) n \Fbar((1-\delta)t_n) Y_{n,T}.
\end{align*}
Thus we get for all $\eta > 0$
\begin{align*}
     \bbP\bigg( \frac{\cS_{n,T}}{n \Fbar(t_n)} - Y_{n,T} > \eta \bigg) &\leq \bbP\bigg( Y_{n,T} \Big((1+\delta) \frac{\Fbar\big((1-\delta)t_n\big)}{\Fbar(t_n)} - 1 \Big) > \eta \bigg) \\
     &\leq \bbP\bigg( Y_{r(n)} \Big((1+\delta) \frac{\Fbar\big((1-\delta)t_n\big)}{\Fbar(t_n)} - 1 \Big) > \eta \bigg).
\end{align*}
In view of Theorem \ref{thm:Nermans-MG} and \eqref{eq:mu-reg-var}, as $n \to \infty$ the right-hand side converges to
\begin{equation*}
     \limsup_{n \to \infty}\bbP\bigg( \frac{\cS_{n,T}}{n \Fbar(t_n)} - Y_{n,T} > \eta \bigg) \leq \bbP\big(W \big((1+\delta) (1-\delta)^{-p} - 1\big) > \eta \big)
\end{equation*}
which goes to zero as $\delta \downarrow 0$.
Hence we obtain
\begin{equation*}
     \bbP\bigg( \frac{\cS_{n,T}}{n \Fbar(t_n)} - Y_{n,T} > \eta \bigg) \convn 0.
\end{equation*}
To get the lower limit we use 
\begin{equation*}
     t_n - (V_u - |u|c) \leq t_n + m(n)c \leq (1-\delta)t_n
\end{equation*}
for all but finitely many $n$ due to $m(n) \in o(t_n)$.
This gives, as above,
\begin{equation*}
     \cS_{n,T} \geq (1-\delta) n \Fbar((1+\delta)t_n) Y_{n,T}.
\end{equation*}
The same reasoning as above shows that
\begin{equation*}
     \bbP\bigg( \frac{\cS_{n,T}}{n \Fbar(t_n)} - Y_{n,T} < -\eta \bigg) \convn 0.
\end{equation*}
Thus \eqref{eq:last-step} follows and the proof is complete.
\end{proof}
\section{Proof of Proposition \ref{prop:stochdom}}\label{sec:stochdom}
In this section we prove Proposition \ref{prop:stochdom}.
The proof rests on the spinal decomposition theorem which we will set up now.
For a more detailed account the reader is referred to \cite{Lyons1997,Biggins2004,Shi2015}.

Recall that we label particles in a branching random walk by elements of the Ulam-Harris tree $\I = \bigcup_n \N^n$ (see Section \ref{subsec:setup}).
Therefore, the positions $(V_u)_{u \in \I}$ of a branching random walk define a measure on the space of marked trees $\I^\ast \coloneqq (\R\cup \{\infty\})^\I$.
We further consider the set $\partial \I$ of infinite lines of descent in the Ulam-Harris tree, which we formally may identify as the set of sequences $(u_n)_{n \in \N_0} \subseteq \I$ such that $u_0 = \varnothing$ and for all $n \in \N_0$ there exists $j \in \N$ with $u_{n+1} = u_nj$.
Recall that $\F_n$ is the $\sigma$-algebra of the positions $(V_u)_{|u| \leq n}$ of a branching random walk up to generation $n \in \N_0$.
We use Biggins' martingale to obtain a probability measure $\cQ$ on $\I^\ast$ with
\begin{equation}\label{eq:Q-def}
     \frac{\d \cQ}{\d \bbP}\bigg|_{\F_n} = W_n,\quad n \in \N_0.
\end{equation}
The corresponding expectation is denoted by $\bbE^\cQ$.
It turns out that $\cQ$ is also the law of the positions of a certain \emph{spinal branching random walk}, which we will describe in the following.

We consider two types of particles which we call \emph{spine} and \emph{off-spine}, respectively.
Off-spine particles displace their offspring according to the original point process $\xi$, while spine particles do so according to the size-biased point process $\hat{\xi}$, whose law has the Radon-Nikod\'ym density $W_1$ with respect to the law of $\xi$ under $\bbP$.
The process starts with a spine particle denoted as $w_0 \coloneqq \varnothing$.
The positions $(V_u)_{u \in \cal{I}}$ in this multi-type branching random walk are constructed as usual (see Section \ref{subsec:setup}).
For every $n \in \N_0$, the next spine particle $w_{n+1}$ is chosen among the direct offspring $(w_nj)_{j \in \N}$ of $w_n$ such that $w_nj = w_{n+1}$ with probability proportional to $\e^{-(V(w_nj) - V(w_n))}$, $j \in \N$.
All other offspring particles of $w_n$ are declared off-spine.

Let $\B$ denote the law of $((V_u)_{u \in \I}, (w_n)_{n \in \N_0})$, which is a probability measure on $\I^\ast \times \partial \I$.
We continue to use the notation $\F_n$ for the $\sigma$-algebra of the positions up to generation $n \in \N_0$ on this space as well.
Note that $\F_n$ contains no information about the spine.
Further, let $\G_n$ denote the $\sigma$-algebra of the spine $(w_j)_{j \leq n}$, the positions $(V(w_j))_{j \leq n}$ and the positions of all its immediate offspring $(V(w_nj)-V(w_n))_{j \in \N}$, $n \in \N_0$.
For $u \in \I$ and $j \in \N$ we write
\begin{equation*}
     \Omega(uj) \coloneqq \{ui\,:\,j \neq i \in \N\} \subseteq \I
\end{equation*}
for the set of \emph{siblings} of $uj$. 
\begin{Thm}[Spinal decomposition theorem]\label{thm:spinal-decomp}
     With $\cQ$ and $\B$ as above, $\cQ$ is equal to the marginal law of $\B$ on $I^\ast$.
     As customary in literature we identify $\cQ$ with $\B$, so we use $\cQ$ as a probability measure on $\I^\ast \times \partial \I$.
     Furthermore, the following hold.
     \begin{enumerate}[(i)]
          \item
          Conditionally on $\G_n$, for all $n \in \N$ the processes
          \begin{equation*}
               (V(uv)-V(u))_{v \in \I},\quad u \in \Omega(w_n)
          \end{equation*}
          are independent and have the distribution of a branching random walk with reproduction point process $\xi$.
          \item
          For every $u \in \I$ with $|u| = n \in \N_0$ we have 
          \begin{equation*}
               \cQ(w_n = u\,|\,\F_n) = \frac{\e^{-V_u}}{W_n}.
          \end{equation*}
          \item
          The law of $(V(w_n))_{n \in \N_0}$ under $\cQ$ is that of the associated random walk of the branching random walk $(V_u)_{u \in \I}$ (see Section \ref{subsec:preliminaries}).
     \end{enumerate}
\end{Thm}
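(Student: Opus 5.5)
We will construct the spinal branching random walk of the statement explicitly on $\I^\ast \times \partial \I$ with law $\B$, and then identify its finite-dimensional marginals with those of $\cQ$ generation by generation. The central computation is that, for every $n$, every $\F_n$-measurable bounded $\Phi$, and every $u$ with $|u|=n$,
\begin{equation*}
     \bbE^{\B}\big[\Phi\,\1\{w_n = u\}\big] = \bbE\big[\Phi\,\e^{-V_u}\big].
\end{equation*}
Summing over $|u| = n$ then gives $\bbE^\B[\Phi] = \bbE[\Phi W_n]$, so the marginal of $\B$ on $\F_n$ agrees with $\cQ$ by \eqref{eq:Q-def}. Since the $\F_n$ generate the $\sigma$-algebra on $\I^\ast$, the first assertion follows by the uniqueness of extension (Kolmogorov/Ionescu--Tulcea consistency of \eqref{eq:Q-def}, which holds because $W_n$ is a mean-one martingale). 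Dividing the identity by $\bbE^\B[\Phi] = \bbE^\cQ[\Phi]$ also yields (ii): $\cQ(w_n = u \mid \F_n) = \e^{-V_u}/W_n$.

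We prove the displayed identity by induction on $n$, taking $\Phi$ to be a product of functionals of the individual reproduction point processes $\xi_v$ with $|v| < n$; such products generate $\F_n$. For $n=1$, under $\B$ the root reproduces according to $\hat\xi$, which has density $W_1$, and then picks $j$ with probability $\e^{-X_j}/W_1$. The $W_1$ cancels and gives $\bbE[\Phi\,\e^{-V_j}]$. For the induction step we write $u = u'j$. On $\{w_{n-1}=u'\}$ the particle $u'$ reproduces via $\hat\xi$ and selects $j$ with probability $\e^{-(V_{u'j}-V_{u'})}/[W_1]_{u'}$. Again the size-biasing density cancels, leaving the factor $\e^{-(V_{u}-V_{u'})}$. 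All other particles $v$ with $|v| = n-1$ reproduce via $\xi$ independently, and we then apply the induction hypothesis to the generation $n-1$ part. This cancellation, together with bookkeeping of the independence across particles, is the main (though mostly notational) obstacle; we will handle it by conditioning on $\F_{n-1}$ and on $w_{n-1}$.

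Statement (i) is read off from the construction: given $\G_n$, every sibling $u \in \Omega(w_n)$ is an off-spine particle. All its descendants are off-spine and reproduce by fresh i.i.d.\ copies of $\xi$, independent of the spine data and of each other. Hence the shifted subtrees are independent branching random walks with reproduction law $\xi$.

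For (iii), the spine increments $V(w_{k+1}) - V(w_k)$ are i.i.d.\ under $\B$ by construction. Their common law is
\begin{equation*}
     \bbE\Big[W_1 \sum_j \frac{\e^{-X_j}}{W_1}\1\{X_j \in B\}\Big] = \int_B \e^{-x}\mu(\d x) = \muhat(B),
\end{equation*}
so the spine positions form the associated random walk, consistent with Lemma \ref{lemma:many21}.
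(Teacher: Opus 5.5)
Your argument is correct and is the standard proof from the references the paper defers to (Lyons; Biggins--Kyprianou; Shi); the paper itself gives no proof. In that proof, as in yours, the size-biasing density $W_1$ of $\hat\xi$ cancels against the selection probability $\e^{-\Delta V(w_{k+1})}/[W_1]_{w_k}$ at each spine step, so that $\bbE^{\B}[\Phi\1\{w_n=u\}]=\bbE[\Phi\,\e^{-V_u}]$ yields the identification with $\cQ$ and (ii), while (i) and (iii) are read off from the construction. The one step to rephrase is the ``dividing'' argument for (ii): write instead $\bbE[\Phi\,\e^{-V_u}]=\bbE^{\cQ}[\Phi\,\e^{-V_u}/W_n]$, using $\cQ(W_n=0)=\bbE[W_n\1\{W_n=0\}]=0$, which identifies the $\F_n$-measurable variable $\e^{-V_u}/W_n$ as $\cQ(w_n=u\,|\,\F_n)$.
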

With the spinal decomposition theorem at hand we can now prove Proposition \ref{prop:stochdom}.
\begin{proof}[Proof of Proposition \ref{prop:stochdom}.]
We start with some preliminary observations.
Recall that $\eta = 1 - \gamma^{-1}$ and note that $1 + \eta \leq \gamma$.
Therefore, by \eqref{eq:Lp-cond} we have $\bbE[W_1^{1+\eta}] < \infty$, and \eqref{eq:Zhat-tail-gamma} combined with Jensen's inequality gives
\begin{equation}\label{eq:Zhat-moment-tail}
     \bbE[\Zbar_1(t)^{1+\eta}] \leq \bbE[\Zbar_1(t)^\gamma]^{(1+\eta)/\gamma} \in \cO(\Fbar(t)^{1+\eta}).
\end{equation}
Moreover, for any function $f$ that is regularly varying at infinity we have $\cO(f(t)) = \cO(f(\lambda t))$ for all $\lambda > 0$.
Since by \eqref{eq:mu-reg-var} for any $q > 0$ the function $t \mapsto \Fbar(t)^{q}$ is regularly varying at infinity with index $-p q$, we conclude that
\begin{equation}\label{eq:O(mu)}
     \cO(\Fbar(t)^q) = \cO(\Fbar(\lambda t)^q) \quad \text{for all }\lambda > 0.
\end{equation}
Furthermore, by Potter's theorem (e.g.~\cite[Theorem 1.5.6]{Bingham1987}) for every $\delta > 0$ there exists $c > 0$ such that for all suficiently large $t$ we have $\Fbar(t)^q \geq c t^{-pq - \delta}$.
This implies that for every $\delta > 0$ we have
\begin{equation}\label{eq:exp-in-o}
     \e^{-\delta t} \in o(\Fbar(t)^{q}).
\end{equation}
Finally, by Theorem \ref{thm:spinal-decomp} (iii) we have, for every $\theta > 0$ and $n \in \N$,
\begin{equation}\label{eq:exp-spine}
     \bbE^\cQ[\e^{-\theta V(w_{n})}] = \E[\e^{-\theta S_{n}}] = \E[\e^{-\theta S_1}]^n = m(1+\theta)^n.
\end{equation}
Using \eqref{eq:heavy-tailed}, \eqref{eq:positive-mean} and \eqref{eq:Lp-cond} combined with the convexity of $m$ we see that $m(1+\theta) < 1$ for all $\theta \in (0,\gamma -1)$.

Now we prove the proposition.
Let $t, M \in (0, \infty)$ and $n \in \N$. 
Using Theorem \ref{thm:spinal-decomp} (ii) we have
\begin{equation}\label{eq:Zhat2spine}
     \Zbar_n(t) = W_n \sum_{|u|=n} \frac{\e^{-V_u}}{W_n} \1\{V_u > t\} = W_n \cQ(V(w_n) > t\,|\,\F_n).
\end{equation}
By switching to the measure $\cQ$ and using properties of conditional expectations we find
\begin{align*}
     \bbE[\Zbar_n(t)^{1 + \eta}\1\{W_n \leq M\}] &= \bbE^{\cQ}\Big[ \Zbar_n(t)^{\eta} \frac{\Zbar_n(t)}{W_n}\1\{W_n \leq M\} \Big] \\
     &= \bbE^{\cQ}[\Zbar_n(t)^{\eta} \cQ(V(w_n) > t\,|\,\F_n) \1\{W_n \leq M\}] \\
     &= \bbE^{\cQ}[\Zbar_n(t)^{\eta} \1\{V(w_n) > t,\,W_n \leq M\} ].
\end{align*}
Now we condition on the spine and use Jensen's inequality for conditional expectations to get
\begin{align}
     \begin{split}\label{eq:Zhat-gamma-moment}
          \bbE[\Zbar_n(t)^{1 + \eta} \1\{W_n \leq M\}] &= \bbE^{\cQ}[\bbE^{\cQ}[\Zbar_n(t)^{\eta}\1\{W_n \leq M\}\,|\,\G_n] \1\{V(w_n) > t\} ] \\
          &\leq \bbE^{\cQ}[\bbE^{\cQ}[\Zbar_n(t)\1\{W_n \leq M\}\,|\,\G_n]^\eta \1\{V(w_n) > t\} ].
     \end{split}
\end{align}
Note that the inner conditional expectation exists since $0 \leq \Zbar_n(t) \leq W_n \leq M$.
Decomposition along the spine yields
\begin{align*}
     \Zbar_n(t) = \e^{-V(w_n)}\1\{V(w_n) > t\} + \sum_{j=1}^{n} \sum_{u \in \Omega(w_{j})} \e^{-V_u}[\Zbar_{n-j}(t-V_u)]_u
\end{align*}
where $\Zbar_0(t) = \1\{t < 0\}$.
Therefore, we get
\begin{align*}
     \bbE^{\cQ}[\Zbar_n(t)\1\{W_n \leq M\}\,|\,\G_n] &= \e^{-V(w_n)}\1\{V(w_n) > t\}\bbE^\cQ[ \1\{W_n \leq M\} \,|\,\G_n] \\
     &+ \sum_{j=1}^{n} \sum_{u \in \Omega(w_{j})} \e^{-V_u}\bbE^\cQ\big[[\Zbar_{n-j}(t-V_u)]_u \1\{W_n \leq M\}\,|\, \G_n \big].
\end{align*}
Bounding $\1\{W_n \leq M\} \leq 1$ and applying the many-to-one formula results in
\begin{equation*}
     \bbE^{\cQ}[\Zbar_n(t)\1\{W_n \leq M\}\,|\,\G_n] \leq \e^{-V(w_n)}\1\{V(w_n) > t\} + \sum_{j=1}^{n} \sum_{u \in \Omega(w_{j})} \e^{-V_u}\Fbar_{n-j}(t - V_u - (n-j)c)
\end{equation*}
where $\Fbar_j(t) \coloneqq \Prob(S_j - jc > t)$, $j > 0$, $\Fbar_0(t) \coloneqq \1\{t < 0\}$ should be recalled.
Now we plug this back in \eqref{eq:Zhat-gamma-moment}, use that $x \mapsto x^\eta$ is subadditive and take $M \uparrow \infty$ to get
\begin{align}\label{eq:spine-decomp}
     \begin{split}
          \bbE[\Zbar_n(nc+t_n)^{1 + \eta}] &\leq \bbE^\cQ \bigg[\e^{-\eta V(w_n)}\1\{V(w_n) > nc+t_n\} \\
          &+ \sum_{j=1}^{n} \bbE^\cQ\bigg[\1\{V(w_n) > nc+t_n\} \bigg(\sum_{u \in \Omega(w_{j})} \e^{-V_u}\Fbar_{n-j}\big(t_n - (V_u-jc) \big) \bigg)^\eta \bigg].
     \end{split}
\end{align}
The first summand on the right-hand side is bounded by $\e^{-\eta t_n}$ which is in $o((n \Fbar(t_n))^{1 + \eta})$ by \eqref{eq:exp-in-o}.
The rest of the proof is concerned with estimating the second summand.
We proceed in several steps.
For brevity, we write $t_n' \coloneqq nc + t_n$ and
\begin{align*}
     \Wbar_n(t) &\coloneqq \sum_{u \in \Omega(w_n)}\e^{-\Delta V_u} \1\{\Delta V_u > t\},\quad t \in \R \\
     \Wbar_n &\coloneqq \sum_{u \in \Omega(w_n)}\e^{-\Delta V_u},\quad n \in \N
\end{align*}
where $\Delta V_u = V_u - V(w_n)$ is the displacement of $u \in \Omega(w_n)$ with respect to its parent.

\noindent
\textbf{Step 1:} \emph{The summand in \eqref{eq:spine-decomp} with $j=n$.}

The summand on the right-hand side of \eqref{eq:spine-decomp} with $j=n$ is given by
\begin{align}\label{eq:summand-j=n}
     \begin{split}
          &\bbE^\cQ \bigg[\1\{V(w_n) > t_n'\} \bigg( \sum_{u \in \Omega(w_n)} \e^{-V_u} \1\{V_u > t_n'\}\bigg)^\eta \bigg] \\
          =\quad &\bbE^\cQ \big[\1\{V(w_n) > t_n'\} \e^{-\eta V(w_{n-1})} \Wbar_n(t_n' - V(w_{n-1}))^\eta \big] \\
          =\quad &\bbE^\cQ \big[\e^{-\eta V(w_{n-1})}\bbE^\cQ[\1\{V(w_n) > t_n'\} \Wbar_n(t_n' - V(w_{n-1}))^\eta \,|\,\G_{n-1}] \big] \\
          =\quad &\bbE^\cQ[\e^{-\eta V(w_{n-1})} \phi(t_n' - V(w_{n-1})) ]
     \end{split}
\end{align}
where
\begin{equation}\label{eq:stochdom-phi}
     \phi(t) \coloneqq \bbE^\cQ[ \1\{V(w_1) > t\} \Wbar_1(t)^\eta],\quad t \in \R.
\end{equation}
We have, for all $t \in \R$,
\begin{equation*}
     \phi(t) \leq \bbE^\cQ[\Wbar_1^\eta] \leq \bbE^\cQ[W_1^\eta] = \bbE[W_1^{{1 + \eta}}] < \infty.
\end{equation*}
Furthermore, using \eqref{eq:Zhat2spine} with $n=1$ we see that
\begin{align*}
     \phi(t) = \bbE^\cQ\Big[\frac{\Zbar_1(t)}{W_1} \Wbar_1(t)^\eta  \Big] \leq \bbE^\cQ\Big[ \frac{\Zbar_1(t)^{1+\eta}}{W_1} \Big] = \bbE[\Zbar_1(t)^{1 + \eta}].
\end{align*}
By \eqref{eq:Zhat-moment-tail} this is in $\cO(\Fbar(t)^{1 + \eta})$ as $t \to \infty$.
Thus, for the right-hand side of \eqref{eq:summand-j=n} we infer, using that $\phi$ is decreasing,
\begin{align*}
     &\bbE^\cQ[\e^{-\eta V(w_{n-1})} \phi(t_n' - V(w_{n-1})) ] \\
     \leq \quad &\bbE^\cQ[\e^{-\eta V(w_{n-1})} \phi(t_n - V(w_{n-1})) ] \\
     \leq \quad &\bbE^\cQ[\e^{-\eta V(w_{n-1})} \1\{V(w_{n-1}) > t_n/2\} ]\bbE[W_1^{{1 + \eta}}] \\
     &+ \phi(t_n/2)\bbE^\cQ[\e^{-\eta V(w_{n-1})} \1\{V(w_{n-1}) \leq t_n/2\} ] \\
     \leq \quad &\e^{-\eta t_n/2} \bbE[W_1^{{1 + \eta}}] + \phi(t_n/2)\bbE^\cQ[\e^{-\eta V(w_{n-1})}]
\end{align*}
The first summand on the right-hand side is in $o(\Fbar(t_n)^{1 + \eta})$ by \eqref{eq:exp-in-o}.
Regarding the second summand, we use \eqref{eq:exp-spine} and \eqref{eq:O(mu)} to get
\begin{align*}
     \phi(t_n/2)\bbE^\cQ[\e^{-\eta V(w_{n-1})}] &= \phi(t_n/2)m(1+\eta)^n \\
     &\leq \phi(t_n/2) \in \cO(\Fbar(t_n/2)^{1 + \eta}) = \cO(\Fbar(t_n)^{1 + \eta})
\end{align*}
Combined we have shown that the $j=n$ summand on the right-hand side of \eqref{eq:spine-decomp} is in $\cO\big( \Fbar(t_n)^{1 + \eta} \big)$.

\noindent
\textbf{Step 2:} \emph{The summands in \eqref{eq:spine-decomp} with $j \in \{1,\ldots,n-1\}$.}

We claim that there exists $C \in (0,\infty)$ such that for all sufficiently large $n \in \N$ and $j < n$ we have
\begin{equation}\label{eq:summand-j<n-claim}
     \bbE^\cQ\bigg[\1\{V(w_n) > t_n'\} \bigg(\sum_{u \in \Omega(w_{j})} \e^{-V_u}\Fbar_{n-j}\big(t_n - (V_u-jc)\big) \bigg)^\eta \bigg] \leq C (n \Fbar(t_n))^{1 + \eta} m(1+\eta/2)^j.
\end{equation}
Note that, using \eqref{eq:spine-decomp} and $m(1+\eta/2)<1$, this claim combined with step 1 completes the proof. 

Let $\eps > 0$ such that $a(1-\eps) > \sqrt{p-2}$ and $\eps/2 \leq 1 - \eps$.
By Theorem \ref{thm:Nagaev} combined with \eqref{eq:subexponential} there exists $n_0 \in \N$ such that for all $n \geq n_0$, $j \leq n$ and $t \geq (1-\eps) a \sigma \sqrt{n \log n}$ we have
\begin{equation}\label{eq:Fbar_bound}
\Fbar_j(t) \leq 2 j \Fbar(t).
\end{equation}
Indeed, first we take $n_1$ such that for all $n \geq n_1$ and $t \geq (1-\eps) a \sigma \sqrt{n \log n}$ we have $\Fbar_n(t) \leq 2 n \Fbar(t)$.
Then, by \eqref{eq:subexponential}, there exists $t_0$ such that $\Fbar_j(t) \leq 2 j \Fbar(t)$ for all $j \leq n_1$ and $t \geq t_0$.
Now take $n_0 \geq n_1$ such that $(1-\eps) a \sigma \sqrt{n_0 \log n_0} \geq t_0$ and \eqref{eq:Fbar_bound} follows.
Moreover, by \eqref{eq:O(mu)} we may choose $n_0$ large enough to ensure the validity of
\begin{equation}\label{eq:muhat-extra-arg}
     \Fbar((1-\eps)t_n) \leq \Fbar(\eps t_n/2) \leq K \Fbar(t_n)
\end{equation}
for some constant $K \in (0,\infty)$ and all $n \geq n_0$ (the first inequality is due to $\eps/2 \leq 1 - \eps$).
For the rest of the proof we assume that $n \geq n_0$ holds.

We proceed with the estimation of the left-hand side of \eqref{eq:summand-j<n-claim}.
Let $j < n$.
Using conditioning on $\G_{j}$, Theorem \ref{thm:spinal-decomp} (iii) and the fact that $\Fbar_j$ is decreasing we find
\begin{align}\label{eq:summand-j<n}
     \begin{split}
     &\bbE^\cQ\bigg[\1\{V(w_n) > t_n'\} \bigg(\sum_{u \in \Omega(w_{j})} \e^{-V_u}\Fbar_{n-j}\big(t_n - (V_u-jc)\big) \bigg)^\eta \bigg] \\
     = \quad &\bbE^\cQ\bigg[\bigg(\sum_{u \in \Omega(w_{j})} \e^{-V_u}\Fbar_{n-j}\big(t_n - (V_u-jc)\big) \bigg)^\eta \cQ(V(w_n) > t_n'\,|\,\G_j) \bigg] \\
     = \quad &\bbE^\cQ\bigg[\e^{-\eta V(w_{j-1})} \bigg(\sum_{u \in \Omega(w_{j})} \e^{-\Delta V_u}\Fbar_{n-j}\big(t_n - (V_u-jc)\big) \bigg)^\eta \Fbar_{n-j}\big(t_n - (V(w_j)-jc)\big) \bigg] \\
     \leq \quad &\bbE^\cQ\bigg[\e^{-\eta V(w_{j-1})} \bigg(\sum_{u \in \Omega(w_{j})} \e^{-\Delta V_u}\Fbar_{n-j}(t_n - V_u) \bigg)^\eta \Fbar_{n-j}(t_n - V(w_j)) \bigg]
     \end{split}
\end{align}
First we estimate the inner term in brackets.
We split the sum into particles $u \in \Omega(w_j)$ with $V_u > \eps t_n$ or $V_u \geq \eps t_n$.
For $V_u > \eps t_n$ we estimate $\Fbar_{n-j}(t_n - V_u) \leq 1$.
For $V_u \leq \eps t_n$ we use again that $\Fbar_{n-j}$ is decreasing and get by \eqref{eq:Fbar_bound} and \eqref{eq:muhat-extra-arg}
\begin{equation*}
     \Fbar_{n-j}(t_n - V_u) \leq \Fbar_{n-j}((1-\eps)t_n) \leq 2(n-j)\Fbar((1-\eps) t_n) \leq 2Kn \Fbar(t_n).
\end{equation*}
Therefore, we have
\begin{align*}
     &\sum_{u \in \Omega(w_{j})} \e^{-\Delta V_u}\Fbar_{n-j}(t_n - V_u) \\
     \leq \quad &\sum_{\substack{u \in \Omega(w_{j}) \\ V_u > \eps t_n}} \e^{-\Delta V_u} + \sum_{\substack{u \in \Omega(w_{j}) \\ V_u \leq \eps t_n}} \e^{-\Delta V_u}2K n\Fbar(t_n) \\
     \leq \quad &\Wbar_j(\eps t_n - V(w_{j-1})) + 2Kn\Fbar(t_n)\Wbar_j.
\end{align*}
Since $x \mapsto x^\eta$ is increasing and subadditive, we get from \eqref{eq:summand-j<n}
\begin{align}\label{eq:summand-j<n2}
     \begin{split}
          &\bbE^\cQ\bigg[\1\{V(w_n) > t_n'\} \bigg(\sum_{u \in \Omega(w_{j})} \e^{-V_u}\Fbar_{n-j}\big(t_n - (V_u-jc)\big) \bigg)^\eta \bigg] \\
          \leq \quad &\bbE^\cQ\big[\e^{-\eta V(w_{j-1})} \Wbar_j(\eps t_n - V(w_{j-1}))^\eta \Fbar_{n-j}(t_n - V(w_j)) \big] \\
          &+ (2Kn\Fbar(t_n))^\eta \bbE^\cQ\big[\e^{-\eta V(w_{j-1})} \Wbar_j^\eta \Fbar_{n-j}(t_n - V(w_j)) \big].
     \end{split}
\end{align}
It suffices to bound each summand of the right-hand side of this inequality by the right-hand side of \eqref{eq:summand-j<n-claim}.
Our strategy to do so is to split the expectation according to the values of $V(w_{j-1})$ and $V(w_j)$.
To organize this we define the events
\begin{align*}
     A_{j,n} \coloneqq \{V(w_{j-1}) \leq \eps t_n/2 \},\quad B_{j,n} \coloneqq \{V(w_j) \leq \eps t_n\},\quad j \leq n.
\end{align*}

\noindent
\textbf{Step 2.1}: \emph{Second summand on the right-hand side of \eqref{eq:summand-j<n2}.}

To control the second summand on the right-hand side of \eqref{eq:summand-j<n2} it suffices to show
\begin{equation*}
     \bbE^\cQ\big[\e^{-\eta V(w_{j-1})} \Wbar_j^\eta \Fbar_{n-j}(t_n - V(w_j)) \big] \leq C n \Fbar(t_n) m(1+\eta/2)^j
\end{equation*}
for some $C \in (0,\infty)$, all sufficiently large $n$ and all $j < n$.
On the event $B_{j,n}$ we have, by \eqref{eq:Fbar_bound} and \eqref{eq:muhat-extra-arg},
\begin{align}\label{eq:Fbar-small-V}
     \begin{split}
          \Fbar_{n-j}(t_n - V(w_j)) &\leq \Fbar_{n-j}((1-\eps) t_n) \\
          &\leq 2 (n-j) \Fbar((1-\eps)t_n) \leq 2 K n \Fbar(t_n).
     \end{split}
\end{align}
Therefore, we get using \eqref{eq:exp-spine}
\begin{align*}
     &\bbE^\cQ\big[\e^{-\eta V(w_{j-1})} \Wbar_j^\eta \Fbar_{n-j}(t_n - V(w_j)) \1_{B_{j,n}}\big] \\
     \leq \quad &2 K n \Fbar(t_n) \bbE^\cQ\big[\e^{-\eta V(w_{j-1})} \Wbar_j^\eta \big] \\
     = \quad &2 K n \Fbar(t_n) \bbE^\cQ[\e^{-\eta V(w_{j-1})}] \bbE^\cQ[\Wbar_j^\eta] \\
     \leq \quad &2 K n \Fbar(t_n) m(1 + \eta)^{j-1} \bbE[W_1^{1 + \eta}].
\end{align*}
The right-hand side has the desired form.
Furthermore, on $B_{j,n}^\comp$ we estimate $\Fbar_{n-j}(t_n - V(w_j)) \leq 1$ and split into to $A_{j,n}$ and $A_{j,n}^\comp$.
On $A_{j,n}^\comp$ we estimate $\e^{-\eta V(w_{j-1})} \leq \e^{-\eta \eps t_n/4} \e^{-\eta V(w_{j-1})/2}$ and obtain
\begin{align*}
     &\bbE^\cQ[\e^{-\eta V(w_{j-1})} \Wbar_j^\eta \Fbar_{n-j}(t_n - V(w_j)) \1_{B_{j,n}^\comp \cap A_{j,n}^\comp}] \\
     \leq \quad &\e^{-\eta \eps t_n/4} \bbE^\cQ[\e^{-\eta V(w_{j-1})/2} \Wbar_j^\eta] \\
     \leq \quad &\e^{-\eta \eps t_n/4} m(1 + \eta/2)^{j-1} \bbE[W_1^{1 + \eta}].
\end{align*}
By \eqref{eq:exp-in-o}, this is also of the desired form.
On $B_{j,n}^\comp \cap A_{j,n}$ we necessarily have $\Delta V(w_j) > \eps t_n/2$, so that
\begin{align*}
     &\bbE^\cQ[\e^{-\eta V(w_{j-1})} \Wbar_j^\eta \Fbar_{n-j}(t_n - V(w_j)) \1_{B_{j,n}^\comp \cap A_{j,n}}] \\
     \leq \quad &\bbE^\cQ[\e^{-\eta V(w_{j-1})} \Wbar_j^\eta \1\{\Delta V(w_j) > \eps t_n/2\}] \\
     = \quad &\bbE^\cQ[\e^{-\eta V(w_{j-1})}] \bbE^\cQ[\Wbar_1^\eta \1\{V(w_1) > \eps t_n/2\}] \\
     = \quad &m(1 + \eta)^{j-1} \bbE^\cQ \Big[ \Wbar_1^{\eta} \frac{\Zbar_1(\eps t_n/2)}{W_1} \Big] \\
     \leq \quad &m(1 + \eta)^{j-1} \bbE[W_1^\eta \Zbar_1(\eps t_n/2) ]
\end{align*}
where we have used \eqref{eq:exp-spine} and \eqref{eq:Zhat2spine} with $n=1$ in the penultimate step.
By Hölder's inequality ($\mathsf{p} = (1 + \eta)/\eta, \mathsf{q} = 1 + \eta$), we infer
\begin{equation*}
     \bbE[W_1^\eta \Zbar_1(\eps t_n/2) ] \leq \bbE[W_1^{1 + \eta}]^\frac{\eta}{1 + \eta} \bbE[\Zbar_1(\eps t_n/2)^{1 + \eta}]^\frac{1}{1+\eta},
\end{equation*}
which by \eqref{eq:Zhat-moment-tail} is in $\cO(\Fbar(\eps t_n/2)) = \cO(\Fbar(t_n))$.
This finishes the second summand on the right-hand side of \eqref{eq:summand-j<n2}.
Now we turn to the first summand and proceed similarly.

\noindent
\textbf{Step 2.2}: \emph{First summand on the right-hand side of \eqref{eq:summand-j<n2}.}

Again, we use the events $A_{j,n}$ and $B_{j,n}$ to split the expectation into several parts according to the values of $V(w_{j-1})$ and $V(w_{j})$.
As before we get
\begin{align*}
     &\bbE^\cQ\big[\e^{-\eta V(w_{j-1})} \Wbar_j(\eps t_n - V(w_{j-1}))^\eta \Fbar_{n-j}(t_n - V(w_j)) \1_{A_{j,n}^\comp}\big] \\
     \leq \quad &\e^{-\eps \eta t_n/4} \bbE^\cQ\big[\e^{-\eta V(w_{j-1})/2} \Wbar_j^\eta] \\
     \leq \quad &\e^{-\eps \eta t_n/4} m(1+\eta/2)^{j-1} \bbE[W_1^{1 + \eta}].
\end{align*}
Further, by \eqref{eq:Fbar-small-V} we see that
\begin{align*}
     &\bbE^\cQ\big[\e^{-\eta V(w_{j-1})} \Wbar_j(\eps t_n - V(w_{j-1}))^\eta \Fbar_{n-j}(t_n - V(w_j)) \1_{A_{j,n} \cap B_{j,n}}\big] \\
     \leq \quad &2 K n \Fbar(t_n) \bbE^\cQ\big[\e^{-\eta V(w_{j-1})} \Wbar_j(\eps t_n/2))^\eta \big] \\
     = \quad &2 K n \Fbar(t_n) m(1 + \eta)^{j-1} \bbE^\cQ[\Wbar_1(\eps t_n/2)^\eta].
\end{align*}
By Hölder's inequality (with $\mathsf{p} = 1/(1 - \eta) = \gamma, \mathsf{q} = 1/\eta$, recall that $\eta = 1 - \gamma^{-1}$) we have for all $t > 0$
\begin{align*}
     \bbE^\cQ[\Wbar_1(t)^\eta] &= \bbE[W_1 \Wbar_1(t)^\eta] \leq \bbE[W_1 \Zbar_1(t)^\eta] \\
     &\leq \bbE[W_1^{\gamma}]^{1/\gamma}\bbE[\Zbar_1(t)]^\eta = \bbE[W_1^{\gamma}]^{1/\gamma} \Fbar(t)^\eta.
\end{align*}
Thus, $\bbE^\cQ[\Wbar_1(\eps t_n/2)^\eta] \in \cO(\Fbar(\eps t_n/2)^\eta) = \cO(\Fbar(t_n)^\eta)$.
Finally, we have
\begin{align*}
     &\bbE^\cQ\big[\e^{-\eta V(w_{j-1})} \Wbar_j(\eps t_n - V(w_{j-1}))^\eta \Fbar_{n-j}(t_n - V(w_j)) \1_{A_{j,n} \cap B_{j,n}^\comp}\big] \\
     \leq \quad &\bbE^\cQ[ \e^{-\eta V(w_{j-1})} \Wbar_j(\eps t_n/2)^\eta \1\{\Delta V(w_j) > \eps t_n/2\} ] \\
     = \quad &\bbE^\cQ[ \e^{-\eta V(w_{j-1})}] \bbE^\cQ[\Wbar_j(\eps t_n/2)^\eta \1\{\Delta V(w_j) > \eps t_n/2\}] \\
     = \quad &m(1 + \eta)^{j-1} \phi(\eps t_n/2)
\end{align*}
with $\phi$ from \eqref{eq:stochdom-phi}.
We have shown earlier that $\phi(t) \in \cO(\Fbar(t)^{1 + \eta})$, thus combined with \eqref{eq:O(mu)} the claimed bound follows.
This concludes the proof.
\end{proof}

\appendix
\section{A Marcinkiewicz-Zygmund-type weak law of large numbers}\label{sec:wlln-proof}
Recall that we write $X \preceq Y$ if for all sufficiently large $t \geq 0$ we have $\bbP(X > t) \leq \bbP(Y > t)$.
To prove the following Marcinkiewicz-Zygmund-type weak law of large numbers one can adapt \cite[Theorem 6.17]{Kallenberg2021} straight-forwardly.
\begin{Thm}\label{thm:wlln}
     Let $(X_{j})_{j \in \N}$ be a sequence of independent random variables.
     Suppose further that $\bbE[X_{j}] = 0$ and that there exists a random variable $X \geq 0$ such that $|X_{j}| \preceq X$ for all $j \in \N$, and that we have $\bbP(X > t) \in o(t^{-p})$ as $t \to \infty$ for some $p \in (1,2)$.
     Then we have $n^{-1/p} \sum_{j=1}^n X_j \to 0$ in probability as $n \to \infty$.
\end{Thm}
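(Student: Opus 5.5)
We follow the classical truncation proof of the weak law of large numbers for triangular arrays (as in \cite[Theorem 6.17]{Kallenberg2021}), with the normalisation $n$ replaced by $n^{1/p}$. Write $b_n \coloneqq n^{1/p}$ and split each summand at level $b_n$:
\begin{equation*}
     X_j = X_j\1\{|X_j| \leq b_n\} + X_j\1\{|X_j| > b_n\} \eqqcolon X_{nj}' + X_{nj}''.
\end{equation*}
Using $\bbE[X_j]=0$, we write $\sum_{j\le n} X_j = \sum_{j\le n}(X_{nj}' - \bbE X_{nj}') + \sum_{j\le n} X_{nj}'' - \sum_{j \le n}\bbE X_{nj}''$. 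We then show that each of the three pieces is $o_\bbP(b_n)$.

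\textbf{Large part.} Choose $t_0$ such that $\bbP(|X_j|>t)\le \bbP(X>t)$ for all $t\ge t_0$ and all $j$. We read the definition of $\preceq$ as being uniform in $j$; this is necessary, and it is implicit in the statement. A union bound gives
\begin{equation*}
     \bbP\Big(\sum_{j\le n} X''_{nj}\neq 0\Big) \le n\,\bbP(X>b_n) = n\, o(b_n^{-p}) = o(1).
\end{equation*}
For the means we have $|\bbE X''_{nj}| \le \bbE[X\1\{X>b_n\}] + b_n\bbP(X>b_n)$, again for large $n$ via the domination. Integrating the tail,
\begin{equation*}
     \bbE[X\1\{X>b\}] = b\,\bbP(X>b) + \int_b^\infty \bbP(X>t)\,\d t = o(b^{1-p}),
\end{equation*}
because $p>1$ and $\bbP(X>t)\le \eps t^{-p}$ eventually. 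Consequently $b_n^{-1}\sum_{j\le n}|\bbE X''_{nj}| \le n\, o(b_n^{-p}) = o(1)$.

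\textbf{Truncated part.} By independence and Chebyshev's inequality it suffices to show $b_n^{-2}\sum_{j\le n}\bbE[(X'_{nj})^2]\to 0$. Here we compute
\begin{equation*}
     \bbE[(X'_{nj})^2] \le \int_0^{b_n} 2t\,\bbP(|X_j|>t)\,\d t \le t_0^2 + \int_{t_0}^{b_n} 2t\,\bbP(X>t)\,\d t.
\end{equation*}
Since $p<2$, for any $\eps>0$ the integral is $\le C_\eps + \eps\, \tfrac{2}{2-p}\, b_n^{2-p}$, with $C_\eps$ covering the range $t$ below the point where $\bbP(X>t)\le\eps t^{-p}$ starts to hold. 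Multiplying by $n b_n^{-2}$ yields $n b_n^{-2}(t_0^2+C_\eps) + \tfrac{2\eps}{2-p}\,n b_n^{-p}$. The first term tends to zero because $n b_n^{-2} = n^{1-2/p}\to 0$ for $p<2$, and the second equals $\tfrac{2\eps}{2-p}$. Since $\eps$ is arbitrary, the variance bound tends to zero.

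\textbf{Main obstacle.} Combining the three estimates gives the claim. The only delicate point is the bookkeeping of the uniform threshold $t_0$ in $\preceq$, together with the fact that the hypothesis $o(t^{-p})$, rather than just $\cO(t^{-p})$, is needed in both the variance estimate and the union bound. Nothing else is delicate.
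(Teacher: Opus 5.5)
Your proof is correct and is the standard truncation argument (truncate at $n^{1/p}$, handle the large part by a union bound and tail integration, the truncated part by Chebyshev) that underlies \cite[Theorem 6.17]{Kallenberg2021}; the paper gives no details beyond saying that this theorem can be adapted straightforwardly, so your approach is essentially the paper's. You are also right that the threshold in $\preceq$ must be uniform in $j$: for example, $X_j=\pm j$ with probability $\tfrac12$ each satisfies $|X_j|\preceq X$ with $X\equiv 0$ under the per-$j$ reading, yet $n^{-1/p}\sum_{j\le n}X_j\not\to 0$. In the paper's applications this uniformity does hold, because the domination there comes from a uniform moment bound via Markov's inequality.
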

\begin{proof}[Proof of Theorem \ref{thm:wlln-w/growth}.]
     We modify the the proof of \cite[Proposition 4.1]{Nerman1981}.
     Assume without loss of generality that $\bbE[X_{kj}] = 0$ for all $k,j$ and that the whole family $(X_{kj})_{j \leq n_k,\,k \in \N}$ is independent.
     By Theorem \ref{thm:wlln} we have
     \begin{equation*}
          \frac{\sum_{i=1}^k \sum_{j=1}^{n_i} X_{ij}}{\Big(\sum_{i = 1}^k n_i\Big)^{1/p}} \pconv{k}{\infty} 0
     \end{equation*}
     Then we find
     \begin{align*}
          \frac{1}{n_k^{q/p}} \sum_{j=1}^{n_k} X_{kj} = \frac{\sum_{i=1}^k \sum_{j=1}^{n_i} X_{ij}}{\Big(\sum_{i = 1}^k n_i\Big)^{1/p}} \bigg(\frac{\sum_{i = 1}^k n_i}{n_k^q} \bigg)^{1/p} - \frac{\sum_{i=1}^{k-1} \sum_{j=1}^{n_i} X_{ij}}{\Big(\sum_{i = 1}^{k-1} n_i\Big)^{1/p}} \bigg(\frac{\sum_{i = 1}^{k-1} n_i}{n_k^q} \bigg)^{1/p}.
     \end{align*}
     Since the right-hand side converges to zero in probability the claim is proved.
\end{proof}

\section{Details on Example \ref{exa:example} (b)}\label{sec:example}
Here we prove the remaining claims of Example \ref{exa:example} (b).
Suppose that we have $\Prob(1-f \leq t) = t^{p+1} \ell(t)$ for all $t \leq \eps \in (0,1)$, where $\ell$ is slowly varying at zero.

\emph{Claim 1:} $m(1) < \infty$.
We have
\begin{equation*}
\frac{m(1)}{b} = \E[(1-f)^{-1}] = \E[(1-f)^{-1}\1\{1-f > \eps\}] + \E[(1-f)^{-1}\1\{1-f \leq \eps\}].
\end{equation*}
The first summand on the right-hand side is bounded by $1/\eps$.
For the second one we use the identity $t^{-1} = \int_t^\infty x^{-2} \d x$ and find, using Fubini's theorem,
\begin{align*}
\E[(1-f)^{-1}\1\{1-f \leq \eps\}] &= \E\bigg[\int_{1-f}^\infty x^{-2}\1\{1-f \leq \eps\} \d x\bigg] \\
&= \int_0^\infty x^{-2}\Prob(1-f \leq \eps \wedge x) \d x \\
&= \int_0^\eps x^{-2} \Prob(1-f \leq x) \d x + \int_\eps^\infty x^{-2} \Prob(1-f \leq \eps) \d x \\
&= \int_0^\eps x^{p-1}\ell(x) \d x + \Prob(1-f \leq \eps) \eps^{-1}.
\end{align*}
Now note that $\tilde{\ell}(x) \coloneqq \ell(1/x)$ is slowly varying at $\infty$, and the first summand transforms to
\begin{equation*}
\int_0^\eps x^{p-1}\ell(x) \d x = \int_{\eps^{-1}}^\infty y^{-p-1}\tilde{\ell}(y) \d y
\end{equation*}
which is finite by \cite[Proposition 1.5.10]{Bingham1987} since $p > 0$.
Therefore, $m(1) < \infty$ as claimed.

\emph{Claim 2:} $t \mapsto \E[\e^{-t(1-f)}]$ is regularly varying at $\infty$ with index $-(p+1)$.
We have
\begin{equation*}
\E[\e^{-t(1-f)}] = \E[\e^{-t(1-f)}\1\{1-f > \eps\}] + \E[\e^{-t(1-f)} \1\{1-f \leq \eps\}].
\end{equation*}
The first summand on the right-hand side is bounded by $\e^{-\eps t}$ and hence of lower order once we show that there is a regularly varying part.
for the second summand we use the identity $\e^{-t} = \int_t^\infty \e^{-x} \d x$ and find, using Fubini's theorem,
\begin{align*}
\E[\e^{-t(1-f)} \1\{1-f \leq \eps\}] &= \E\bigg[ \int_{t(1-f)}^\infty \e^{-x}\1\{1-f \leq \eps\} \d x \bigg] \\
&= \int_0^\infty \e^{-x} \Prob(1-f \leq \eps \wedge x/t) \d x \\
&= \int_0^{\eps t} \e^{-x} \Prob(1-f \leq x/t) \d x + \int_{\eps t}^\infty \e^{-x} \Prob(1-f \leq \eps) \d x.
\end{align*}
Once again, the second summand on the right-hand side is bounded by $\e^{-\eps t}$.
The first one is further evaluated as
\begin{equation*}
\int_0^{\eps t} \e^{-x} \Prob(1-f \leq x/t) \d x = t^{-(p+1)} \int_0^{\eps t} \e^{-x} x^{p+1} \ell(x/t) \d x.
\end{equation*}
We claim that, as $t \to \infty$,
\begin{equation*}
\int_0^{\eps t} \e^{-x} x^{p+1} \ell(x/t) \d x \sim \ell(1/t) \Gamma(p+2)
\end{equation*}
where $\Gamma$ denotes the Gamma-function.
Note that for each fixed $x > 0$ we have $\ell(x/t)/\ell(1/t) \to 1$ as $t \to \infty$, thus this claim follows from the dominated convergence theorem once we find a majorant.
Define the function $\tilde{\ell}(x) = \ell(1/x)$ for $x \geq 1/\eps$ and $\tilde{\ell}(x) = \ell(\eps)$ for $x < 1/\eps$.
Then $\tilde{\ell}$ is slowly varying at $\infty$ and bounded away from $0$ and $\infty$ on all compact intervals in $(0,\infty)$ (since otherwise we run into trouble with $\Prob(1-f \leq t) = t^p \ell(t)$).
By Potter's theorem (e.g.~\cite[Theorem 1.5.6]{Bingham1987}) there exists $C \in (0,\infty)$ such that
\begin{equation*}
\frac{\tilde{\ell}(t/x)}{\tilde{\ell}(t)} \leq C x \quad \text{for all }t/x, t > 0.
\end{equation*}
Now we see that
\begin{equation*}
\frac1{\ell(1/t)} \int_0^{\eps t} \e^{-x} x^{p+1} \ell(x/t) \d x = \int_0^{\eps t} \e^{-x} x^{p+1} \frac{\tilde{\ell}(t/x)}{\tilde{\ell}(t)} \d x \leq C\int_0^\infty \e^{-x} x^{p+2} \d x < \infty.
\end{equation*}
This gives the desired majorant.
We have thus shown that, as $t \to \infty$,
\begin{equation*}
\E[\e^{-t(1-f)}] \sim t^{-(p+1)}\ell(1/t) \Gamma(p+2).
\end{equation*}
Therefore, the claim is proved.

\section*{Acknowledgement}
The author wishes to express sincere gratitude to Alicja Ko\l odziejska and Matthias Meiners for carefully reading an early version of this manuscript.
This work was financially supported by DFG grant ME 3625/5-1.

\bibliographystyle{alpha}
\bibliography{large-deviations}

\end{document}